\documentclass[11pt]{amsart}
\usepackage{amsfonts, amsmath, enumerate, amssymb}

\usepackage{hyperref}

\textwidth17.5cm
\textheight24cm
\addtolength{\evensidemargin}{-2.4cm}
\addtolength{\oddsidemargin}{-2.4cm}
\addtolength{\topmargin}{-.6in}
\addtolength{\itemindent}{-.5in}

\newtheorem{theorem}{Theorem}[section]

\newtheorem{lemma}[theorem]{Lemma}
\newtheorem{corollary}[theorem]{Corollary}
\newtheorem{proposition}[theorem]{Proposition}
\theoremstyle{definition}
\newtheorem{definition}[theorem]{Definition}
\newtheorem{definitions}[theorem]{Definitions}
\newtheorem{example}[theorem]{Example}
\newtheorem{examples}[theorem]{Examples}

\theoremstyle{remark}
\newtheorem{remark}[theorem]{Remark}
\newtheorem{remarks}[theorem]{Remarks}

\numberwithin{equation}{section}
\usepackage{amscd}
\usepackage{xy}
\usepackage{amsmath, amssymb}


\numberwithin{equation}{subsection}
\newcommand{\be}%
  {\protect\setcounter{equation}{\value{subsubsection}}}
  \newcommand{\ee}%
   {\protect\setcounter{subsubsection}{\value{equation}}}

  {\protect\setcounter{subsubsection}{\value{equation}}}




\def \AbPsh{\rm {AbPsh}}



\def \B{\mathcal B}




\def \C{\mathcal C}

\def \Cl{\mathbb C}
\def \colim{\underset \rightarrow  {\hbox {lim}}}

\def \colimn{\underset {n \rightarrow \infty}  {\hbox {lim}}}
\def \colimk{\underset {k \rightarrow \infty}  {\hbox {lim}}}

\def \colimalpha{\underset {\alpha}  {\hbox {colim}}}

\def \colimK.{\underset {\underset K^.  \rightarrow}  {\hbox {lim}}}

\def \colimU.{\underset {\underset U_.  \rightarrow}  {\hbox {lim}}}


\def \D{\mathcal D}



\def \EG1{E{(G \times {\mathbb C}^*)}{\underset {G\times {\mathbb C}^*} 
\times}}

\def \EZ(s)1{E{(Z(s) \times {\mathbb C}^*)}{\underset {(Z(s)\times {\mathbb
C}^*)}  \times}}

\newcommand{\eps}{ \, {\boldsymbol\varepsilon} \,}

\def \EM(u){EM(u){\underset {M(u)}  \times}}
\def \EM(us){EM(u,s){\underset {M(u, s)}  \times}}



\def \F{\mathcal F}



\def \group{{\mathbf G}}


\def\holimD{\mathop{\textrm{holim}}\limits_{\Delta }}
\def\holim{\mathop{\textrm{holim}}\limits}
\def \hocolimD{\underset \Delta  {\hbox {hocolim}}}

\def \H{\rm {\bf H}}

\def \Hom{\underline {Hom}}
\def \Hom{{\mathcal H}om}



\def \invlim1{\underset {\infty \leftarrow q}  {\hbox {lim}}^1}

\def \oI{\rm I}
\def \bI{\mathbf I}

\def \oJ{\rm J}





\def \L3{\Lambda \times \Lambda \times \Lambda}
\def \L2{\Lambda \times \Lambda}

\def \lim{\underset \leftarrow  {\hbox {lim}}}

\def \longright2arrow{{\overset \longrightarrow  {\overset {} 
\longrightarrow}}}

\def \L{L\times \Cl ^*}


\def \M{\mathcal M}





\def \O{{\mathcal O}}



\def \PSh{\rm {PSh}}
\def \Spt{\rm {Spt}}




\def \ra{\rightarrow}

\def \RG^{R(G)^{\hat {}}\ }

\def \res{respectively}

\def \RHom{{{\mathcal R}{\mathcal H}om}}
\def \R{{\mathcal R}}



\def \S{\mathcal S}
\def \bS{\mathbf S}

\def \Sm{{\rm {Sm}}}
\def \Sch{{\rm {Sch}}}

\def \Sh{\rm {Sh}}

\def\Spt{\rm {\bf Spt}}



\def \topGcoh*{^{top, *} _{G}}
\def \topGho*{ _{top,*} ^{G}}

\def \T{{\mathbf T}}









\def \W{\mathbf W}

\def \X{\mathcal X}





\def \Z(s){Z(s) \times {\mathbb C}^*}
\def \Z{\mathbb Z}


\begin{document}

\title{Equivariant motivic homotopy theory}
\author{Gunnar Carlsson}
\address{Department of Mathematics, Stanford University, Building 380, Stanford,
California 94305}
\email{gunnar@math.stanford.edu}
\thanks{  }  
\author{Roy Joshua}
\address{Department of Mathematics, Ohio State University, Columbus, Ohio,
43210, USA.}
\email{joshua@math.ohio-state.edu}
\thanks{The first author was supported by the NSF. The second author was supported by the IHES and  grants
from the NSA and NSF at different stages of this work. AMS subject classification: 14F42.}
\begin{abstract} In this paper, we develop the theory of equivariant motivic
homotopy theory, both unstable and stable. While our original interest was in the case of profinite group actions on smooth schemes,
we discuss our results in as broad a  setting as possible so as to be applicable in a variety of  contexts, for example to the case
of smooth group scheme actions on schemes that are not necessarily smooth. We also discuss how ${\mathbb A}^1$-localization
behaves  with respect to  mod-$\ell$-completion, where $\ell$ is a fixed prime. 

\end{abstract}
\maketitle

\centerline{\bf Table of contents}
\vskip .3cm 
1. Introduction
\vskip .3cm
2. The basic framework of equivariant motivic homotopy theory: the unstable theory
\vskip .3cm 
3. The basic framework of equivariant motivic homotopy theory: the stable theory
\vskip .3cm 
4. Effect of ${\mathbb A}^1$-localization  on mod-$\ell$ completions 
\vskip .3cm
5. References.
\vskip .3cm 
\markboth{Gunnar Carlsson and Roy Joshua}{{Equivariant motivic homotopy theory}}
\input xypic
\vfill \eject
\section{\bf Introduction}
\label{intro}
\vskip .3cm 
The original purpose of this paper was to set up a suitable framework for a series of papers
exploring  the conjectures due to the
first author on equivariant algebraic K-theory, equivariant with respect to the
action of a Galois group, see \cite{Carl1}. We began this project a few years ago. It was clear that
a framework suitable for such applications would be that of equivariant motivic
stable homotopy, equivariant with respect to the action of a profinite group, which typically will
be the Galois group of a field extension. In the intervening years
other applications also emerged and it became clear that one needs to work over a general Noetherian
base scheme $S$ allowing actions of smooth $S$-group schemes. In fact certain applications dictate
that one needs to not only work in this context, but also perform all the operations {\it over $S$}, or
relative to $S$, i.e. the corresponding homotopy theory corresponds to what is called {\it ex-homotopy 
theory}. We hope that the present paper addresses several of these concerns.
\vskip .3cm
The study of algebraic cycles and motives using techniques from algebraic topology
has been now around for about 15 years, originating with the work of Voevodsky on
the Milnor conjecture and the paper of Morel and Voevodsky on ${\mathbb A}^1$-homotopy theory:
see \cite{Voev} and \cite{MV}. The paper \cite{MV} only dealt with the unstable theory,
and the stable theory was subsequently developed by several authors in somewhat different contexts,
for example, \cite{Hov-3} and \cite{Dund2}. 
An equivariant version of the unstable theory already
appears in \cite{MV} and also in \cite{Guill}, but both are for the action of a finite group. 
\vskip .3cm
Equivariant stable homotopy theory, even in the classical setting, needs a fair amount
of technical machinery. As a result, and possibly because concrete applications of the equivariant
stable homotopy framework in the motivic setting have been lacking till now, equivariant stable motivic 
homotopy theory has not been worked out or even considered in any detail up until now. The present paper
hopes to change all this, by working out equivariant stable motivic homotopy theory in some detail and in
as broad a setting as possible so that it readily applies to any of the sites that one encounters in the
motivic context.
\vskip .3cm
One of the problems in handling the equivariant situation in stable homotopy is that the spectra  are no longer
indexed by the non-negative integers, but by representations of the group. One  way of circumventing this problem
that is commonly adopted is to use only multiples of the regular representation. This works when the
representations are in characteristic $0$, because then the regular representation breaks up as a sum of
all irreducible representations. However, when considering the motivic situation, unless one wants to restrict
to schemes in characteristic $0$, the above decomposition of the regular representation is no longer true in general. 
As a result there seem to be
serious technical issues in adapting the setting of symmetric spectra to  study equivariant stable motivic
homotopy theory. On the other hand the technique of enriched functors as worked out in \cite{Dund1} and \cite{Dund2}
only requires as indexing objects, certain finitely presented objects in the unstable category. As a result 
this approach  requires the least amount of extra work to cover the equivariant setting, and we have chosen to adopt
this as the appropriate framework for our work.
\vskip .3cm
In classical algebraic topology itself, it is often necessary to work in a relative setting over an arbitrary space as a base: for example,
this is needed in the context of the classical Becker-Gottlieb transfer. This is handled by working there in the context of what
are called {\it ex-spaces}: see \cite{James} , \cite{BG76} and also \cite{IJ}. The corresponding framework for doing motivic stable homotopy theory means
not only that one needs to work over a general base scheme, but also that several natural operations need to be adapted to a relative setting. Most
of our results are worked out in this context so that they readily apply to various current and future applications of motivic homotopy theory.
\vskip .3cm
The following is an outline of the paper. Though our primary focus is on obtaining a full-fledged version of stable motivic homotopy theory
in the equivariant setting for smooth schemes, there are many advantages to working out the basic theory in much more generality. For example,
this way our basic results would apply readily to various different Grothendieck topologies that one encounters in the motivic
context, for example it could be the Nisnevich topology on smooth schemes, the cdh topology on schemes (which may not be smooth),
the equivariant Nisnevich topology on smooth schemes with group action, the \'etale topology on schemes, the isovariant \'etale topology on schemes with group actions etc.
 It could also apply equally well to other Grothendieck topologies, for example the fppf or syntomic sites which are important
in algebraic geometry. 
\vskip .3cm
In view of these, we try to work on general Grothendieck topologies as much as possible and specialize to particular topologies only when
it becomes absolutely necessary. Nearly half the paper is devoted to working out  the unstable theory in full detail and in as broad a 
framework as possible. 
\vskip .3cm
We will fix a {\it small category} $\bS$ which is closed under all finite sums and finite limits, with a terminal object $\B$. $\B$ will be called {\it the base object}
of $\bS$. {\it Such a category, often provided with a Grothendieck topology, will be our input.} In particular, we do not concern ourselves
with questions or issues that deal with intrinsic properties of these categories or sites, since they have already been worked out in detail elsewhere. 
(For example, sites where every object also has an action by the group are discussed in detail in somewhat  different contexts in 
\cite{T2}, \cite{J03}, \cite{Serp}, \cite{KO}, and \cite{Her}.)
Later on we will specialize to sites that are of particular importance in the motivic contexts.
\vskip .3cm
If $Y$ is any object of $\bS$, 
$\bS/Y$ will denote the comma category of  $Y$-objects in $\bS$ (i.e. objects $X$ in $\bS$ together with a chosen map $X \ra Y$ in $\bS$.) 
We will assume that $S \eps \bS$ is a fixed object and is also provided
with a fixed section $s: \B \ra S$ to the structure map $p_S:S \ra \B$ of $S$.  
\vskip .3cm
A {\it pointed $S$-object} $X$ will be an $S$-object provided with a section $s_X:S \ra X$ to the structure map  $p_X$. Given any object
 $X$ over $S$, one defines the associated pointed object by 
\[X_{+_{S}} = X\sqcup S.\]
Pointed $\B$-objects will have the corresponding meaning. Given an object $X$ (over $\B$), one lets $X_+ = X \sqcup \B$ denote the 
corresponding pointed object (over $\B$).
 Throughout the paper $\group$ will denote one of the following: (i) either a discrete group, (ii) a profinite group
or (iii)  a presheaf of groups. Often the framework for (ii) will be where the base object 
$\B$ is a field and the profinite group will be the absolute Galois group  of the field with respect to a chosen algebraic 
closure. In case (iii), when $\bS$ denotes a category of schemes over the base-scheme $\B$,
$\group$ will denote a smooth (affine) group scheme viewed as a presheaf of groups. (Here  
we consider actions of $\group(U)$ on $\Gamma(U, P)$
for $P \eps \PSh^{\group}$. The latter category is defined below.)
\vskip .3cm
Then ${\rm {PSh}}^{\group}({\rm {\bS}})$ (abbreviated to $\PSh^{\group}$) will denote the category of pointed simplicial presheaves on the 
category $\bS$ 
 provided with an action by $\group$.  Next we will fix a 
family, $\W$, of subgroups of
$\group$, so that it has (at least) the following properties 
\be  \begin{enumerate}[ \rm (i)]
        \item it is an inverse  system ordered by inclusion,
        \item if $H \eps \W$, $H_{\group}$ =the core of $H$, i.e. the largest subgroup of $H$ that is normal in $\group$ belongs to $\W$ and
         \item if $H \eps \W$ and $H' \supseteq H$ is a subgroup of $\group$, then $H' \eps \W$. 
\end{enumerate} \ee
In the case $\group$ is a finite
group, $\W$ will denote all subgroups of $\group$ and when $\group$ is profinite, it will denote all subgroups of finite index in 
$\group$. When $\group$ is a presheaf of groups or a smooth affine group-scheme, we will leave $\W$ unspecified, for now. Clearly the family of all closed subgroup-schemes
of a given smooth group-scheme satisfies all of the above properties, so that we may use this as a default choice of $\W$ when nothing 
else is specified.
\vskip .3cm
${\rm {PSh}}^{\group, c}({\rm {\bS}})$  (abbreviated to $\PSh^{\group,c}$) will denote
the full subcategory of such pointed simplicial sheaves where the action of $\group$ is  {\it continuous with respect to $\W$}, i.e.
if $P \eps  \PSh ({\rm {\bS}, \group})$  and $s \eps \Gamma (U, P)$, then the stabilizer of $s$ belongs to $\W$. 
\vskip .3cm
In addition to this we also consider the following alternate {\it relative case}, which has not been so far looked at even in the
non-equivariant case. Assume that $\group$ is a presheaf of groups (or when the underlying category of $\bS$ 
is a category of schemes, $\group$ is a smooth group scheme defined over the scheme $S$.) Then
$\PSh^{\group}/S$ will denote the subcategory of $\PSh^{\group}$ consisting of those pointed simplicial presheaves $P$ which come equipped with  
maps $p_P: P \ra S$ and  $s_P: S \ra P$ in $\PSh^{\group}$ so that $s_P$ is a section to $p_P$: this extra structure will be referred to as {\it the data of pointing by $S$}. Maps between two such simplicial presheaves $f:P \ra Q$ will
be maps of simplicial presheaves that are compatible with extra structure, i.e. those maps $f$ of simplicial presheaves that fit in
a commutative diagram:
\be \begin{equation}
     \label{rel.case}
\xymatrix{&{S} \ar[dl]_{s_P} \ar[dr]^{s_Q} \\
            {P} \ar[rr]_f \ar[dr]_{p_P}&& {Q} \ar[dl]^{p_Q}\\
            & {S}}
\end{equation} \ee
Next if $f: F \ra G$ is a monomorphism of objects in $\PSh^{\group}/S$, we define $G/F$ to be the pushout $P$:
\be \begin{equation}
     \label{S.quotient}
\xymatrix{{F} \ar@<1ex>[r]^f \ar@<1ex>[d]^{p_F} & {G} \ar@<1ex>[d]\\
           {S} \ar@<1ex>[r] &{P}}
    \end{equation} \ee
Since all the other three vertices have compatible maps to $S$, one obtains an induced map $P \ra S$, so that the composition 
$S \ra P \ra S$ is the identity. (In the rare occasions, where there is a chance for confusion, we will
denote the above pushout by $G/^SF$.) The following are the two main results in the unstable setting.
\begin{theorem} 
\label{main.thm.1}Following the terminology as in Definition ~\ref{equiv.model.struct},
letting the {\it generating cofibrations} be of the form 
\vskip .3cm 
$\oI_{\group} = \{ (\group /H )_+ \wedge i \eps \oI, H \eps \W \}$, 
\vskip .3cm 
 the {\it generating trivial cofibrations} be of the form 
\vskip .3cm
$\oJ_{\group} = \{(\group/H )_+ \wedge  j \eps \oJ, 
H \subseteq \group, \quad H \eps \W  \}$ \, and
\vskip .3cm \noindent
the  {\it weak-equivalences} ({\it fibrations}) be maps  $f:P' \ra P$ in
 so that $f^H:{P'}^H \ra P^H$ is a weak-equivalence
(fibration, \res) defines a cofibrantly generated
simplicial model structure on
$\PSh^{\group}$, $\PSh^{\group,c}$,  $\PSh^{\group}/S$ and on $\PSh^{\group,c}/S$ that is proper. 
In addition, the 
smash product of pointed simplicial presheaves defined in ~\eqref{smash.0} and ~\eqref{smash.over.S} makes these symmetric monoidal model categories. 
\vskip .3cm
When $\group$ denotes a finite group or profinite group, the categories $\PSh^{ \group,c }$ and $\PSh/S^{\group,c}$ are locally presentable, i.e. 
there exists a set of
objects, so that every simplicial presheaf $P$ in the above categories is a filtered colimit
of these objects. In particular, they are also combinatorial and tractable model categories.
\end{theorem}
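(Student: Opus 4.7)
The plan is to apply Kan's recognition principle for cofibrantly generated model structures, transferring the (non-equivariant) model structure on $\PSh$ established earlier in the paper to each of the four equivariant categories via the family of adjoint pairs $\bigl((\group/H)_+ \wedge (-),\, (-)^H\bigr)_{H \in \W}$. Weak equivalences and fibrations are defined levelwise on $H$-fixed points for every $H \in \W$, so the two-out-of-three property and stability under retracts are immediate since each functor $(-)^H$ preserves and detects these properties from $\PSh$. For the smallness hypothesis, one uses that $(-)^H$ commutes with filtered colimits along $\group$-equivariant monomorphisms (invoking property (i) of $\W$, and in the continuous case the bound on stabilizers), so the domains of $\oI_{\group}$ and $\oJ_{\group}$ remain small relative to $\oI_{\group}\cell$ and $\oJ_{\group}\cell$.

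The core technical step is to verify that every relative $\oJ_{\group}\cell$ map is a weak equivalence and that $\oI_{\group}\inj = \oJ_{\group}\inj \cap W$. For the first point, note that for $j \in \oJ$ and $K, H \in \W$ one has
\[
\bigl((\group/H)_+ \wedge j\bigr)^K \;=\; \bigl((\group/H)^K\bigr)_+ \wedge j,
\]
which is a wedge of copies of $j$ indexed by the $K$-fixed cosets of $\group/H$. Property (ii), which guarantees that the core $H_{\group}$ lies in $\W$, ensures that this decomposition stays inside the family, so the statement reduces to the fact that $\oJ\cell$ maps are trivial cofibrations in $\PSh$, together with the compatibility of $(-)^K$ with the pushouts and transfinite compositions in play. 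The identity $\oI_{\group}\inj = \oJ_{\group}\inj \cap W$ is then a direct adjunction argument using the same formula. Left and right properness follow because $(-)^K$ preserves pushouts along cofibrations and pullbacks along fibrations, reducing to properness of $\PSh$.

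For the simplicial enrichment, $\group$ acts trivially on simplicial mapping spaces, and SM7 is verified by applying $(-)^H$ termwise. For the pushout-product axiom, the analysis reduces to computing the smash $(\group/H)_+ \wedge (\group/K)_+$, which as a pointed $\group$-set is a wedge of orbits $(\group/L)_+$ for subgroups $L$ contained in conjugates of $H \cap K$; property (iii) ensures every such $L$ lies in $\W$, so pushout-products of generators in $\oI_{\group}$ again belong to $\oI_{\group}\cof$ via the non-equivariant pushout-product applied termwise. The relative case over $S$ carries through in the slice category using the smash product $\wedge_S$ and pushout construction in ~\eqref{S.quotient}, since pushouts in $\PSh^{\group}/S$ are computed in $\PSh^{\group}$ with the induced structure map to $S$. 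The main obstacle I anticipate lies here: tracking the pointing-by-$S$ data through cell attachments, and checking that $\wedge_S$ commutes with $(-)^H$ in the correct way so that the above orbit-decomposition arguments transfer verbatim.

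For the local presentability assertion in the finite or profinite case, each orbit $(\group/H)_+$ with $H \in \W$ is a finite pointed $\group$-set (since $[\group : H] < \infty$), hence compact; combined with a set of compact generators for the underlying category of pointed simplicial presheaves on $\bS$, these generate $\PSh^{\group,c}$ (\res\ $\PSh^{\group,c}/S$) under filtered colimits, while the continuity constraint on stabilizers is what makes the generation by orbits in $\W$ suffice. Since the generating (trivial) cofibrations have cofibrant, small-presentable domains and codomains, the resulting model structure is both combinatorial and tractable.
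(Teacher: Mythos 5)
Your overall strategy agrees with the paper's: transfer the model structure along the adjoint pairs $\bigl((\group/H)_+\wedge(-),\,(-)^H\bigr)_{H\in\W}$ and invoke Kan's recognition theorem, then check the pushout-product axiom via orbit decompositions. However, your core technical step has a gap that the paper works to avoid.

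The identity $\bigl((\group/H)_+\wedge j\bigr)^K=\bigl((\group/H)^K\bigr)_+\wedge j$ is correct, but your next claim — that this is ``a wedge of copies of $j$ indexed by the $K$-fixed cosets of $\group/H$'' — presupposes that $(\group/H)^K$ is a discrete set. That holds when $\group$ is discrete, finite, or profinite, but the theorem also covers the case where $\group$ is a presheaf of groups (e.g.\ a smooth group scheme), and then $(\group/H)^K$ is again a (pre)sheaf, not a discrete union of points. The same problem recurs in your pushout-product argument, where you use the orbit decomposition $(\group/H)_+\wedge(\group/K)_+\cong\bigvee_\alpha(\group/K_\alpha)_+$; the paper explicitly restricts that decomposition (and hence the projective-model-structure proof) to the case of constant group objects as in \eqref{tech.restr.1}. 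For general $\group$ the paper does not decompose $(\group/H)_+^K$ or $(\group/H)_+\wedge(\group/K)_+$ into orbits at all. Instead it argues: $(\group/H)_+^K$ is \emph{cofibrant} in $\PSh_*$ (automatically in the object-wise model structure, and by the restrictive hypothesis \eqref{tech.restr.1} in the projective one), and then smashing with a cofibrant object preserves (trivial) cofibrations by the monoidal axiom in $\PSh_*$. Combined with the fact, established separately (cf.\ \eqref{pushouts.and.colimits.fixed.points}, citing \cite{Guill}), that $(-)^K$ preserves pushouts along maps of the form $\mathrm{id}\wedge i$ for $i\in\oI$ or $\oJ$, this yields that $\oJ_\group$-cell maps become trivial cofibrations after applying $(-)^K$, without ever needing orbit counts. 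This is the key lemma your proposal is missing; it is precisely what allows the argument to cover group schemes. As the paper's remark after the theorem notes, in the group-scheme case one cannot even say $(\group/H)_+^K\wedge i$ lies in $\oI$-cell, only that it is a cofibration.

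A secondary, smaller slip: you invoke property (ii) of $\W$ (closure under cores) to justify that the wedge decomposition ``stays inside the family,'' but that property plays no such role in the smallness/cell argument; what is actually needed at each stage is the preservation of pushouts by $(-)^K$ along generating (trivial) cofibrations and smallness of $\delta\Delta[n]\rtimes h_X$ (resp.\ $\Lambda[n]\rtimes h_X$) in $\PSh_*$. Your treatment of local presentability and the relative case over $S$ is in line with the paper's.
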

\vskip .3cm \vskip .3cm
 One also considers  the following alternate framework for equivariant unstable motivic homotopy theory.
First one considers the orbit category
${\mathcal O}_{\group}= \{\group/ H \mid  H \eps \W \}$. A morphism $\group/H \ra
\group/K$ corresponds to a $\gamma \eps \group$, so that $\gamma.H \gamma ^{-1}
\subseteq K$. One may next consider the category $\PSh^{{\mathcal
O}_{\group}^o}$ ($\PSh/S^{{\mathcal
O}_{\group}^o}$) of ${\mathcal O}_{\group}^o$ -diagrams with values in ${\rm
{PSh}}$ (${\rm {PSh}}/S$, \res). 
Then the two categories 
$\PSh^{ \group,c}$ and $\PSh^{{\mathcal O}_{\group}^o}$ ($\PSh/S^{ \group,c}$ and $\PSh/S^{{\mathcal O}_{\group}^o}$)
are related by the functors:
\vskip .3cm
$\Phi:\PSh^{\group,c} \ra \PSh^{{\mathcal O}_{\group}^o} (\Phi:\PSh/S^{\group,c} \ra \PSh/S^{{\mathcal O}_{\group}^o}), 
\quad F \mapsto \Phi(F) = \{\Phi(F)(\group/H)=F^H\} \mbox{ and} $
\vskip .3cm
$\Theta: \PSh^{{\mathcal O}_{\group}^o} \ra \PSh^{\group,c} (\Theta: \PSh/S^{{\mathcal O}_{\group}^o} \ra \PSh/S^{\group,c}), \quad M \mapsto \Theta (M) =
{\underset {  \{H | {H \eps \W} \}} \colim } M(\group/H)$.
\vskip .3cm \noindent 
A key result then is the following.
\begin{theorem}
 \label{main.thm.2}
The functors $\Phi$ and $\Theta$ are Quillen-equivalences.
\end{theorem}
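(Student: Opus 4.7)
The plan is to adapt the classical Elmendorf theorem to this setting. First I would check that $(\Theta,\Phi)$ is an adjoint pair. The orbits $\{\group/H\mid H\eps\W\}$, viewed as objects of $\PSh^{\group,c}$, form a full subcategory equivalent to ${\mathcal O}_{\group}$, and $\Phi$ is the corresponding restricted Yoneda embedding $F\mapsto\mathrm{Hom}(\group/{-},F)=F^{(-)}$. Its left adjoint, given by left Kan extension along this inclusion, is a coend of the form $\int^{H}M(\group/H)\wedge(\group/H)_{+}$, which is the intended interpretation of the colimit formula defining $\Theta$. Continuity (i.e.\ stabilizers in $\W$) of $\Theta(M)$ follows from hypothesis (iii) on $\W$, since each $(\group/H)_+$ with $H\eps\W$ has stabilizers in $\W$.

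Next I would equip $\PSh^{{\mathcal O}_{\group}^{o}}$ with the projective model structure, whose generating (trivial) cofibrations are ${\mathcal O}_{\group}(-,\group/H)_{+}\wedge i$ for $i\eps\oI$ (resp.\ $i\eps\oJ$) and $H\eps\W$; weak equivalences and fibrations are detected by evaluation at each $\group/H$. By Theorem~\ref{main.thm.1} the weak equivalences and fibrations in $\PSh^{\group,c}$ are defined through the same fixed-point functors $(-)^{H}$, so $\Phi$ tautologically preserves \emph{and} reflects both. In particular $\Phi$ is a right Quillen functor that reflects \emph{all} weak equivalences, not merely those between fibrant objects.

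To conclude the Quillen equivalence it then suffices to show that for every cofibrant $M\eps\PSh^{{\mathcal O}_{\group}^{o}}$ the unit $\eta_{M}\colon M\to\Phi(\Theta(M))$ is a weak equivalence. On a representable generator, density/Yoneda gives $\Theta({\mathcal O}_{\group}(-,\group/H)_{+})=(\group/H)_{+}$, and then $\Phi((\group/H)_{+})(\group/K)=((\group/H)_{+})^{K}={\mathcal O}_{\group}(\group/K,\group/H)_{+}$, so $\eta_{M}$ is an isomorphism on generators. A transfinite induction over a cellular presentation of $M$---using that $\Theta$, as a left adjoint, preserves all colimits and that each $(-)^{H}$ commutes with the pushouts along $\oI_{\group}$ and the transfinite compositions that build cofibrations in Theorem~\ref{main.thm.1}---extends this to all cofibrant $M$. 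The relative versions $\PSh^{\group,c}/S$ and $\PSh^{{\mathcal O}_{\group}^{o}}/S$ follow formally: the adjunction descends to the comma categories because $S$ carries the trivial $\group$-action (so $\Phi(S)=S$ as a diagram), and the model structures are inherited.

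The main obstacle I anticipate is the last step of the induction, namely verifying that the fixed-point functors $(-)^{H}$ commute with the cellular pushouts attaching cells $(\group/K)_{+}\wedge i$ for $H,K\eps\W$. In the discrete case this reduces to the double-coset decomposition $(\group/K)^{H}=\coprod_{[g]\eps H\backslash\group/K}\{gK\}$, and hypotheses (i)--(iii) on $\W$ are precisely what is needed to keep all the orbits appearing in such decompositions inside $\W$, so that the inductive cell attaching stays within our category. For profinite $\group$ the continuity assumption propagates the calculation across filtered colimits of open subgroups (cf.\ \cite{T2}, \cite{Serp}, \cite{KO}); for a smooth affine group scheme $\group$ I would reduce the assertion sheaf-locally to the representable orbits and verify there that $(\group/K)^{H}$ is a disjoint union of orbits of the form $\group/L$ with $L\eps\W$, so that the cellular induction is closed.
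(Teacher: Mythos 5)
Your proposal follows essentially the same route the paper takes: establish the adjunction, observe that $\Phi$ tautologically preserves and reflects fibrations and weak-equivalences (because both model structures detect them through the same fixed-point functors $(-)^H$), show the unit $\eta_M\colon M\to\Phi\Theta(M)$ is an isomorphism for cofibrant $M$ by cellular induction starting from the generators $(\group/H)_+\wedge i$, and then conclude Quillen equivalence; the paper phrases the last step via the ``$f\colon\Theta X\to Y$ we iff $f^\flat\colon X\to\Phi Y$ we'' criterion and factors $f^\flat$ through $\eta$, while you invoke the equivalent ``reflects weak-equivalences plus derived unit'' form, but the content is identical. One small overcomplication in your last paragraph: the paper does not need $(\group/K)^{H}$ to decompose as a disjoint union of orbits with stabilizers in $\W$. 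The step that actually carries the cellular induction is that $(-)^{K}$ commutes with pushouts along maps of the form $(\group/H)_+\wedge i$ with $i\in\oI$, and the paper establishes this (property~\eqref{pushouts.and.colimits.fixed.points}, following \cite{Guill}) by computing the pushout section-wise and pulling the $K$-fixed points through the constant factor $(\group/H)_+$, which works uniformly for all three flavours of $\group$ without any double-coset analysis or orbit closure hypothesis on $\W$. So the obstacle you flag is real but its resolution is lighter than you anticipate, and you already cite the needed fact; the argument is correct.
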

\vskip .3cm
Observe that in both the above results we need to consider $\PSh^{{\mathcal O}_{\group}^o}$ ($\PSh/S^{{\mathcal O}_{\group}^o}$) with the 
projective model structure and that we are able to prove that $\PSh^{\group, c}$ and $\PSh/S^{\group,c}$ are locally presentable
only when $\group$ denotes either a finite or profinite group. On the other hand it is possible to prove readily that both the categories
$\PSh^{{\mathcal O}_{\group}^o}$ and $\PSh/S^{{\mathcal O}_{\group}^o}$, when provided with the {\it object-wise model structure}
are locally presentable and therefore combinatorial (and also tractable) model categories in general, i.e. when $\group$ may denote
any one of the three allowed possibilities. Moreover, we observe that both the above categories, when provided with the object-wise model
structures are what are called {\it excellent monoidal model categories} in \cite[Definition A.3.2.16]{Lur}.
\vskip .3cm
Section 3 is entirely devoted to the stable theory.
In view of the above observations, we restrict to  the diagram categories
 $\PSh^{{\mathcal O}_{\group}^o}$ ($\PSh/S^{{\mathcal O}_{\group}^o}$) provided with their object-wise model structures and develop a stable model structure
on them.
We will let $\C$ denote one of the above  categories  provided with the structure
 of object-wise model structure of cofibrantly generated model categories discussed below. (These start with the object-wise model structure 
on $\PSh$ and $\PSh/S$.) The above categories are symmetric monoidal with the smash-product, $\wedge$,  of simplicial presheaves
defined as in ~\eqref{smash.0} or as in ~\eqref{smash.over.S} as the monoidal product. The unit for the monoidal product will be denoted $S^0$. 
 In both cases we perform Bousfield localizations 
either by inverting maps of the form $U \times I \ra U$ when the cite $\bS$ is provided with an interval $I$ (in the sense of \cite[2, (2.3)]{MV})
and certain maps associated to distinguished squares if the topology is
defined by a cd-structure (or maps 
of the form $U_{\bullet} \ra U$ where $U \eps {{\rm Sm}/S}$ and $U_{\bullet} \ra U$ is a 
hypercovering in the given topology,  for a general site.)
\vskip .3cm
Let $\C'$ denote a $\C$-enriched full-subcategory of $\C$ consisting of {\it objects closed 
under the
monoidal product $\wedge$, all of which are assumed to be cofibrant  and containing the unit $S^0$}. 
Let $\C_0'$ denote a $\C$-enriched  sub-category of $\C'$, which may or may not be full, but closed under the 
monoidal product $\wedge$ and containing the unit $S^0$. Then {\it the basic model of equivariant motivic stable homotopy category} will be
the category $[\C_0', \C]$. This is the category whose objects are $\C$-{\it enriched covariant functors
 from $ \C_0'$ to $\C$}: see \cite[2.2]{Dund1}. There are several possible choices for the category $\C_0'$, which are discussed
in detail in Example ~\ref{main.eg}.
\vskip .3cm
 We let $ {\rm Sph}(\C_0')$ denote the $\C$-category defined by taking the 
objects to be the same as the objects of $\C_0'$ and where $\Hom_{ { Sph}(\C_0')}(T_U, T_V) = T_W$ if 
$T_V= T_W \wedge T_U$ and $*$ otherwise. Since $T_W $ is a sub-object of $\Hom_{\C}(T_U, T_V)$, it 
follows that ${\rm Sph}(\C_0')$ is a sub-category of $\C_0'$. Now an enriched functor in 
$[{ {\rm Sph}}(\C_0'), \C]$
is simply given by a collection $\{X(T_V)|T_V \eps {\rm Sph}(\C_0')\}$ provided with a compatible collection of 
maps $T_W \wedge X(T_V) \ra X(T_W \wedge T_V)$. We let $Presp (\C) =[{{\rm Sph}}(\C_0'), \C]$ and call this {\it the
category of pre-spectra with values in $\C$}.
\vskip .3cm
We let $Spectra (\C) $(or rather $Spectra (\C_0', \C)$)  be the category of enriched functors $[\C_0', \C]$.
It is important to observe that the objects of $\C_0'$ and ${{\rm Sph}}(\C_0')$ are the same, but
that the latter is often a strictly smaller sub-category than the former. In particular it will often be
not symmetric monoidal. For example, in the non-equivariant case, i.e. when the group $\group$ is trivial, and
$\C$ is the category of pointed simplicial presheaves on a site,
the subcategory ${\rm Sph}(\C_0')$ will correspond to the usual spheres whereas $\C_0'$ will correspond to
the bigger subcategory of symmetric spheres.
\vskip .3cm
We provide several model structures on $Spectra(\C)$ (and also on $Presp(\C)$: (i) two unstable model structures we call the projective and injective model
  structures and (ii) two stable model structures corresponding to each one of the above unstable model structures.
A map $f: {\mathcal X}' \ra {\mathcal X}$ in $Spectra (\C)$ ($Presp(\C)$) is a 
{\it level equivalence} ({\it level fibration}, {\it level trivial fibration}, {\it level cofibration},
{\it level trivial cofibration) if each $Ev_{T_V}(f)$ is a weak-equivalence (fibration, trivial fibration,
cofibration, trivial cofibration, \res) in $\C$.  Such a map $f$ is a {\it projective cofibration}
if it has the left-lifting property with respect to every level trivial fibration}. It is an {\it injective fibration} if it has the
right lifting property with respect to every trivial level cofibration. The generating projective cofibrations 
(generating projective trivial cofibrations) will be denoted $\oI_{Sp}$ ($\oJ_{Sp}$) for the category $Spectra (\C)$
and the corresponding categories for $Presp(\C)$ will be denoted $\oI_{Presp}$ ($\oJ_{Presp}$, \res). Then we obtain:
\begin{theorem}
 \label{main.thm.3}
The projective cofibrations, the level fibrations and level equivalences define a cofibrantly 
generated model category structure on $Spectra (\C)$  with the generating cofibrations (generating trivial
cofibrations) being $\oI_{Sp}$ ($\oJ_{Sp}$, \res). This model structure (called {\it the 
projective model structure} on
 $Spectra (\C)$) is left-proper (right proper) if
the corresponding model structure on $\C$ is left proper (right proper, \res). It is cellular if
 the corresponding model structure on $\C$ is cellular. The corresponding statements hold for $Presp(\C)$
with $\oI_{Presp}$ ($\oJ_{Presp}$, \res) in the place of $\oI_{Sp}$ ($\oJ_{Sp}$, \res).
\vskip .3cm
The level cofibrations, the injective fibrations and the level weak-equivalences  define a cofibrantly generated
model category structure on $Spectra(\C)$ which will form a  combinatorial (in fact tractable) model category.
\end{theorem}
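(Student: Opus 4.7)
The plan is to construct each of the three model structures by invoking the standard recognition theorems for cofibrantly generated (respectively combinatorial) model categories applied to the enriched functor category $[\C_0',\C]$, reducing every condition level-wise to the corresponding statement for $\C$.

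\smallskip

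For the projective structure on $Spectra(\C)$, I would begin with the adjunction
$(F_{T_V} \dashv Ev_{T_V}) \colon \C \rightleftarrows [\C_0',\C]$
for each $T_V \in \C_0'$, where $Ev_{T_V}$ is evaluation and $F_{T_V}$ is the enriched free functor $F_{T_V}(X)(T_W) = \uhom_{\C_0'}(T_V,T_W)\wedge X$ provided by the closed symmetric monoidal structure on $\C$. By construction $\oI_{Sp}$ consists of maps $F_{T_V}(i)$ with $i$ a generating cofibration of $\C$, and similarly for $\oJ_{Sp}$; the adjunction identifies maps with the right lifting property against $\oI_{Sp}$ (respectively $\oJ_{Sp}$) precisely with level trivial fibrations (respectively level fibrations). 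I would then apply Kan's recognition theorem: (i) smallness of the domains of $\oI_{Sp},\oJ_{Sp}$ follows from the fact that colimits in $[\C_0',\C]$ are computed level-wise and the domains of $\oI,\oJ$ are small in $\C$; (ii) every $\oJ_{Sp}$-cell is a level equivalence because each $Ev_{T_W}F_{T_V}(j)$ is a wedge of copies of $j$ indexed by morphisms in $\C_0'$, hence a trivial cofibration in $\C$, and level-wise filtered colimits of trivial cofibrations are trivial cofibrations; (iii) the classes match via the lifting characterizations above.

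\smallskip

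Left- and right-properness then transfer immediately: pushouts along projective cofibrations (which are in particular level cofibrations) and pullbacks along level fibrations are computed level-wise, so level-wise properness of $\C$ gives properness of $Spectra(\C)$. Cellularity follows by checking Hirschhorn's three conditions (effective monomorphisms, smallness of cofibrations, compactness of cells) object-wise in $\C_0'$, which reduces each to the corresponding cellularity assumption on $\C$. The argument for $Presp(\C) = [{\rm Sph}(\C_0'),\C]$ is formally identical: ${\rm Sph}(\C_0')$ is a $\C$-enriched subcategory with the same object set, so the enriched free/evaluation adjunction exists and $\oI_{Presp}, \oJ_{Presp}$ play the same role.

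\smallskip

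For the injective structure I would use Jeff Smith's theorem on combinatorial model categories. By Theorem \ref{main.thm.1} (and the assumption that $\C$ is itself combinatorial/tractable), $\C$ is locally presentable, hence so is $[\C_0',\C]$. The class of level cofibrations is clearly cofibrantly generated (a generating set can be built from the generators of $\C$ together with a small set of injective-cofibrant replacements on representables), and the class of level equivalences is accessible and satisfies the 2-out-of-3 property; moreover it is closed under pushout along level cofibrations because this holds in $\C$ (left properness). Smith's theorem then produces the desired combinatorial, tractable model structure with cofibrations the level cofibrations, weak equivalences the level equivalences, and fibrations determined by right lifting against trivial level cofibrations, matching our definition of injective fibrations.

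\smallskip

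The step I expect to be the main obstacle is verifying Smith's accessibility/solution-set condition for the class of level trivial cofibrations: one needs that this class is generated under cofibrations and transfinite composition by a \emph{set} of morphisms, and this requires the full strength of combinatoriality/tractability of $\C$ together with a careful presentability argument for the enriched functor category. The other inputs (cell smallness, closure properties, identification of lifting classes) are routine and reduce to level-wise statements in $\C$.
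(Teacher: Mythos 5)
Your proposal follows essentially the same route as the paper: establish the projective structure by transfer along the free/evaluation adjunction $(\F_{T_V} \dashv {\mathcal E}val_{T_V})$, check smallness and the generating-set lifting characterizations level-wise, deduce properness and cellularity from the fact that (co)limits in $[\C_0',\C]$ are computed level-wise, and obtain the injective structure from the combinatoriality machinery for functor categories (the paper cites Lurie's Proposition A.3.3.2, which is the form of Smith's theorem you have in mind).

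One step in your sketch is stated incorrectly, though the conclusion survives. You assert that ${\mathcal E}val_{T_W}\F_{T_V}(j)$ is ``a wedge of copies of $j$ indexed by morphisms in $\C_0'$.'' In fact ${\mathcal E}val_{T_W}\F_{T_V}(j) = \Hom_{\C_0'}(T_V,T_W)\wedge j$, where $\Hom_{\C_0'}(T_V,T_W)$ is an \emph{object} of $\C$, not a discrete set of morphisms; it is a wedge of copies of $j$ only in special cases (e.g.\ the symmetric-spectra ${\rm Sph}(\C_0')$ situation, where the hom objects reduce to coproducts of spheres). The correct argument, as in the paper's Proposition~\ref{key.props.unstable}(iii), is that under the standing hypothesis of \S\ref{en_fun}(ii) every object of $\C$ is cofibrant, so smashing with $\Hom_{\C_0'}(T_V,T_W)$ preserves (trivial) cofibrations by the pushout-product axiom; the same point is what makes that hypothesis indispensable for this theorem. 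You also compress the retract argument showing a projective cofibration that is a level equivalence lies in $\oJ_{Sp}\cof$ into a single phrase; that factor-and-retract step must be carried out explicitly as in \cite[Lemma 1.1.9]{Hov-1}.
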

\vskip .3cm \noindent 
See Corollary ~\ref{level.model.1} for further details. Next we localize the above model structure suitably to define
the stable model structure. A spectrum ${ X} \eps Spectra(\C)$ is an  {\it $\Omega$-spectrum} if if it is level-fibrant and 
each of the natural maps
${ X}(T_V) \ra \Hom_{\C}(T_{W}, { X}(T_V \wedge T_W))$, $T_V, T_W \eps \C_0'$ is a weak-equivalence in $\C$. 
\begin{theorem} (See Proposition ~\ref{stable.model.1} for more details.)
\label{main.thm.4}
(i) The corresponding stable model structure on $Spectra(\C)$ is cofibrantly 
generated, left proper and cellular when $\C$ is assumed to have these properties.
\vskip .3cm 
(ii) The fibrant objects in the stable model structure on $Spectra(\C)$ are the
 $\Omega$-spectra defined above. 
\vskip .3cm 
(iii) $Spectra(\C)$ with the above stable model structures form symmetric monoidal model categories in that the pushout-product
axiom (as in \cite[Definition  3.1]{SSch}) is satisfied with the monoidal structure being the smash-product defined in ~\eqref{smash.products.def}. The 
unit of the monoidal structure is cofibrant in the stable injective model structure.
\end{theorem}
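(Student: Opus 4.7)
The plan is to obtain the stable model structure by a left Bousfield localization of the level model structures from Theorem~\ref{main.thm.3}, and then to check that localization is compatible with the monoidal structure.

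First I would identify the set $\mathcal{S}$ of stabilization maps at which to localize. For each $T_V, T_W \in \C_0'$ and each generating cofibration $i: A \to B$ of $\C$ one forms a map of free/shifted spectra on a domain built from $T_V \wedge T_W$ mapping into a free spectrum on $T_V$ involving $\Hom_{\C}(T_W,-)$; concretely, one uses the adjoints of the assembly maps $T_W \wedge X(T_V) \to X(T_V \wedge T_W)$. A standard small-object argument shows that an object $X$ is $\mathcal{S}$-local (with respect to the level-projective structure) precisely when it is level-fibrant and each adjoint map $X(T_V) \to \Hom_{\C}(T_W, X(T_V \wedge T_W))$ is a weak-equivalence, i.e.\ $X$ is an $\Omega$-spectrum. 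This proves (ii).

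For (i), since Theorem~\ref{main.thm.3} gives the level-projective model structure as cofibrantly generated, left proper and cellular whenever $\C$ is, I would invoke Hirschhorn's existence theorem for left Bousfield localizations of left proper cellular model categories at a \emph{set} of maps (equivalently, one may use Smith's theorem in the combinatorial/tractable setting handled by the injective level structure); this immediately produces a left proper, cofibrantly generated (and cellular, resp.\ combinatorial/tractable) model structure on $Spectra(\C)$ whose cofibrations are the level-projective (resp.\ level) cofibrations and whose fibrant objects are the $\mathcal{S}$-local ones, i.e.\ the $\Omega$-spectra. The generating trivial cofibrations are obtained from $\oJ_{Sp}$ together with the (cofibrant replacements of) horn-pushout-products of the maps in $\mathcal{S}$ with the generating cofibrations of $\C$.

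For (iii), the main work is to verify that the pushout-product axiom survives localization. I would first note that on the level model structures this axiom reduces to the pushout-product axiom in $\C$ together with the fact that $\C_0'$ is closed under $\wedge$ and consists of cofibrant objects (so smashing with a representable preserves cofibrations and level equivalences). To pass to the stable structure, by the standard criterion it suffices to show that for a generating cofibration $i$ and a map $s \in \mathcal{S}$ the pushout-product $i \osquare s$ is a stable equivalence; this follows because smashing a stabilization map of $\mathcal{S}$ with any cofibrant spectrum again inverts a suspension, hence is a stable equivalence, using that $\C_0'$ is closed under $\wedge$. Cofibrancy of the unit $S^0$ in the stable injective structure is inherited from the level injective structure since every object is level-cofibrant there.

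The main obstacle I anticipate is the last step: verifying that smashing with the stabilization maps in $\mathcal{S}$ yields stable equivalences, i.e.\ the monoidal compatibility of localization. The usual approach (following Hovey) goes through the fact that the free-spectrum functors intertwine $\wedge$ with the monoidal product on $\C_0'$; one must check that the relevant shifted/suspension endofunctors on $Spectra(\C)$ become Quillen equivalences after stabilization, which requires a careful cofinality argument over $\C_0'$ and uses crucially that ${\rm Sph}(\C_0') \subseteq \C_0'$ together with the closure of $\C_0'$ under $\wedge$. Once this is established, the pushout-product and unit axioms follow formally.
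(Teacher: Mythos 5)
Your overall plan matches the paper's: Bousfield-localize the level (projective or injective) model structure of Theorem~\ref{main.thm.3} at the set $\mathbf{S}$ of stabilization maps, then check that the localized structure inherits the favorable properties. For~(i) you and the paper both invoke the localization machinery of Hirschhorn (cellular, left proper) and the Smith/Barwick result in the combinatorial case, and for~(ii) both identify $\mathbf{S}$-local level-fibrant objects with $\Omega$-spectra via the adjunctions $Map(\F_{T_V}(C),\chi) \simeq Map(C, \chi(T_V))$ --- though I'd call this an adjunction/Yoneda argument rather than a "small-object argument."

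Where your route genuinely diverges is in how the monoidal compatibility in~(iii) is organized. You follow Hovey's formulation: reduce the pushout-product axiom to showing that $i \square s$ is a stable equivalence when $i$ is a generating cofibration and $s \in \mathbf{S}$, and verify this by noting that $\F_{T_V}(C) \wedge \F_{T_U}(D) \cong \F_{T_V \wedge T_U}(C \wedge D)$ so that smashing the stabilization map with a free (hence, by cell-induction, any cofibrant) spectrum stays a stabilization map. The paper instead appeals to Barwick's criterion for monoidal Bousfield localization \cite[Proposition 3.19]{Bar}, which reduces to showing that the square of derived internal homs $\RHom(Y,\Hom(B,Z)) \to \RHom(X,\Hom(B,Z))$, etc., is homotopy cartesian; the crux there is that for $A$ level-cofibrant and $Z$ stably fibrant, $\Hom(A,Z)$ is again stably fibrant. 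These two statements are adjoint to one another: $\Hom(A,Z)$ is $\mathbf{S}$-local iff $C \wedge A \to D \wedge A$ is an $\mathbf{S}$-local equivalence for each $s: C \to D$ in $\mathbf{S}$, and that is exactly your smashing claim. So the "main obstacle" you correctly identify is the same one the paper dispatches via Barwick; your Hovey-style phrasing makes the monoidal-stability calculation more explicit, while the paper's Barwick phrasing cleanly packages the same verification into a cited criterion. Your handling of the cofibrancy of the unit in the stable injective structure agrees with the paper's.
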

\vskip .3cm
The above results then enable us to define the various equivariant stable motivic homotopy categories that we consider:
these are discussed in Definition ~\ref{mot.spectra}. One may want to note that the terminology above is
clearly non-standard: what we called pre-spectra are often called spectra.
\vskip .3cm
In the last section we  study how ${\mathbb A}^1$-localization behaves with respect to 
 $Z/\ell$-completion, for a fixed prime $\ell$.
Since comparisons are often made with the \'etale homotopy types, completions at a prime also play a major role in our work. 
Unfortunately it is not clear that the Bousfield-Kan
completion as such may not commute with ${\mathbb A}^1$-localization, at least in the general of setting of equivariant spectra considered in this paper. 
This makes it necessary for us to
redefine a Bousfield-Kan completion at a prime $\ell$ that behaves well in the ${\mathbb A}^1$-local setting: this is
a combination of the usual Bousfield-Kan completion and ${\mathbb A}^1$-localization.  Since the resulting
completion is in general different from the traditional Bousfield-Kan completion, we denote our completion functor
by ${\widetilde {Z/\ell}}_{\infty}$.
The main properties of
the resulting completion functor are discussed in ~\ref{completion.mult}.
\vskip .3cm
We have also decided to include brief discussions of some alternate frameworks. One of these, in the unstable setting, is a {\it coarse
model structure} which is considered rather briefly in ~\ref{coarse.model}. Another is a construction of equivariant spectra by applying
the construction of symmetric spectra to any of the equivariant unstable model structures that we discuss. This is  discussed only briefly in Examples
~\ref{main.eg}(iv).
\vskip .3cm
{\bf Acknowledgements}. The authors thank Paul Ostv\ae{}r and Amalendu Krishna for sharing their work \cite{KO} with us and for helpful
discussions. We also thank Jeremiah Heller for helpful discussions. 
\section{\bf The basic framework of  equivariant  motivic homotopy
theory: the unstable theory}
\vskip .3cm
In this section we define a general framework that will specialize readily to equivariant  unstable motivic
homotopy theory, but will be general enough to handle a multitude of other environments. For example, using the
cdh topology and equivariant resolution of singularities, it will be able to handle singular schemes over fields
of characteristic $0$. Moreover, it will be broad enough to handle any of the Grothendieck topologies that arise
commonly in algebraic geometry, like the fppf (flat) topology, the \'etale topology etc.
\vskip .3cm
We recall the framework discussed in the introduction. We will fix a {\it small category} $\bS$ which is closed under all finite sums and finite limits, with a 
terminal object $\B$. $\B$ will be called {\it the base object}
of $\bS$. If $Y$ is any object of $\bS$, 
$\bS/Y$ will denote the comma category of  $Y$-objects in $\bS$ (i.e. objects $X$ in $\bS$ together with a chosen map $X \ra Y$ in $\bS$.) 
We will assume that $S \eps \bS$ is a fixed object and is also provided
with a fixed section $s: \B \ra S$ to the structure map $p_S:S \ra \B$ of $S$.  
\vskip .3cm
A {\it pointed $S$-object} $X$ will be an $S$-object provided with a section $s_X:S \ra X$ to the structure map  $p_X$. Given any object
 $X$ over $S$, one defines the associated pointed object by 
\[X_{+_{S}} = X\sqcup S.\]
Pointed $\B$-objects will have the corresponding meaning. Given an object $X$ (over $\B$), one lets $X_+ = X \sqcup \B$ denote the 
corresponding pointed object (over $\B$).
 Throughout the paper $\group$ will denote one of the following: (i) either a discrete group, (ii) a profinite group
or (iii)  a presheaf of groups. Often the framework for (ii) will be where the base object 
$\B$ is a field and the profinite group will be the absolute Galois group  of the field with respect to a chosen algebraic 
closure. In case (iii), when $\bS$ denotes  a category of schemes over the base-scheme $\B$,
$\group$ will denote a smooth (affine) group scheme viewed as a presheaf of groups. (Here  
we consider actions of $\group(U)$ on $\Gamma(U, P)$
for $P \eps \PSh^{\group}$, $U \eps \bS$.)
\vskip .3cm
$ \bS_{?}$    $({\rm {\bS/S}})_{?}$) will denote the  category  $\bS$ (${\rm {\bS/S}}$, \res) provided with
a Grothendieck 
topology denoted  $?$. We will assume all these sites {\it have enough points}. In the motivic context,  $\bS$ will denote a category schemes and usually provided with one of the big Zariski, Nisnevich, cdh  or \'etale
topologies, but also possibly other topologies. In this case, observe that the morphisms (coverings) in the site $\bS/S_{?}$ will be those morphisms
in the category $\bS/S$ which also belong to the morphisms (coverings) in the topology $?$ on $\bS$ ($\bS/S$, \res), but possibly satisfying further conditions. 

\subsection{\bf Model structures}
Though the main objects considered in this paper are simplicial presheaves, we will
often need to provide them with different model structures (in the sense of
\cite{Qu}). The same holds for the
stable case, where different variants of spectra will be considered.
 Therefore,  the basic framework will be that of  model categories,
often simplicial (symmetric)
 monoidal model categories, i.e. simplicial model categories that have the
structure of monoidal model categories as in \cite{Hov-2}. We will further
assume that these categories have other {\it nice properties}, for example, are
closed under all small limits and colimits, are proper and cellular. (See
\cite{Hov-1} and \cite{Hirsch} for basic material on model categories.)
\vskip .3cm
\subsubsection{\bf The category of simplicial presheaves}
\label{simp.pre}
We fix an object $S \eps \bS$ and consider the category $\PSh = {\rm
{PSh}}( {\bS/S})$  ($\PSh_*= {\rm
{PSh}}_*( {\bS/S})$ of simplicial presheaves (pointed simplicial presheaves, \res) on ${{\bS/S}}$. 
(Recall a pointed simplicial presheaf here has the usual meaning: i.e. $P$ is a pointed simplicial presheaf if there is a unique map $* \ra P$, 
where $*$ denotes the set with one point, which identifies with the terminal object in the category of (unpointed) simplicial presheaves.)
We will need to consider different
Grothendieck topologies
on ${\rm {\bS/S}}$: this is one reason that we prefer to work for the most part with pointed simplicial
presheaves rather than with pointed simplicial sheaves. We will consider pointed
simplicial sheaves only when it becomes absolutely essential to do so. If $?$ denotes a Grothendieck topology,
 (usually either $\acute{e}t$, $Nis$, $cdh$ or
$Zar$ (i.e. \'etale, Nisnevich, cdh  or Zariski)), and $S \eps {\rm {\bS}}$, ${\rm {Sh}}({\rm {\bS/S}}_{?})$ will denote the
category of pointed simplicial sheaves on the corresponding big site.
\vskip .3cm
One may define the structure of a {\it closed simplicial model category} on $\PSh$ and on $\PSh_*$ as follows. To any simplicial set (pointed simplicial set)
one may associate the corresponding constant simplicial presheaf (constant pointed simplicial presheaf). 
Given a simplicial set $K$ and a $P \eps \PSh$ ($P \eps \PSh_*$), we let $K \times P$ ($K \rtimes P$) be the simplicial presheaf (pointed simplicial presheaf)
 given in
degree $n$ by
\be \begin{equation}
\label{half.smash}
    (K \times P)_n = K_n \times P_n,  ( (K \rtimes P )_n = (K_n \times P_n)/K_n \times {*} )
    \end{equation} \ee
\vskip .3cm \noindent
with the structure maps induced from $P$ and $K$ and where $*$ denotes the base point of $P$. This defines a bi-functor
\[\times: (simpl.sets) \times \PSh \ra \PSh, (\rtimes: (simpl.sets) \times \PSh_* \ra \PSh_*)\]
One may define a simplicially enriched Hom in $\PSh$ ($\PSh_*$) by letting it be denoted $Map(P, Q)$, for $P, Q \eps \PSh$ 
and defined by 
\be \begin{equation}
     \label{enriched.hom}
Map(P, Q)_n = Hom(\Delta[n] \times P, Q)  (Map(P, Q)_n = Hom(\Delta[n] \rtimes P, Q), \res.)
    \end{equation} \ee
\vskip .3cm \noindent
One verifies readily that this defines a closed simplicial model structure on the categories $\PSh$ ($\PSh_*$, \res). A similar definition provides 
the structure of a closed simplicial model structure on various subcategories of $\PSh$, $\PSh_*$ and on the categories 
$\PSh^{\O_{\group}^o}$ ($\PSh_*^{\O_{\group}^o}$, \res.)
\vskip .3cm
One defines a {\it symmetric monoidal structure} on the categories $\PSh$ and $\PSh^{\O_{\group}^o}$ by
\be  \begin{equation}
     \label{smash.0} 
P \wedge Q = (P \times Q)/(*\times Q \cup P \times *)
    \end{equation} \ee
\vskip .3cm 

On the other hand, for a fixed smooth scheme $S \eps \bS$, the categories $\PSh/S$ and $\PSh^{\O_{\group}^o}/S$ will have the monoidal structure defined
as follows. Recall that  objects in the above categories consist of  simplicial presheaves $P$ which come equipped 
with  
maps $p_P: P \ra S$ and $s_P: S \ra P$ so that $s_P$ is a section to $p_S$. Maps between two such simplicial presheaves in the category 
$\PSh/S$ ($\PSh^{\O_{\group}^o}/S$) 
$f:P \ra Q$ will
be maps of simplicial presheaves that are compatible with extra structure: see ~\eqref{rel.case}.
 Next if $f: F \ra G$ is a monomorphism of objects in $\PSh/S$, we define $G/F$ to be the pushout $P$:
\be \begin{equation}
     \label{S.quotient}
\xymatrix{{F} \ar@<1ex>[r]^f \ar@<1ex>[d]^{p_F} & {G} \ar@<1ex>[d]\\
           {S} \ar@<1ex>[r] &{P}}
    \end{equation} \ee
Since all the other three vertices have compatible maps to $S$, one obtains an induced map $P \ra S$, so that the composition 
$S \ra P \ra S$ is the identity. Therefore, if $P, Q \eps \PSh/S$, we define their smash-product (over $S$) as:
\be \begin{equation}
\label{smash.over.S}
P \wedge^S Q= (P \times _S Q)/(s_P(S) \times_S Q \cup_{s_P(S) \times_S s_Q(S)} P \times s_Q(S))
\end{equation} \ee
\vskip .3cm \noindent
where the quotient is taken in the sense of ~\eqref{S.quotient}. 
\vskip .3cm 
There are at least three commonly used model structures on categories of diagrams with values in a combinatorial model category like 
$\PSh$, $\PSh_*$ or $\PSh/S$: (i) {\it the object-wise model structure}, (ii) 
{\it the local injective model structure} and (iii) {\it the projective model structure}. The object-wise model structure often has the
advantage that it has more cofibrant objects and the cofibrancy condition is easy to describe, whereas in the projective model
structure one has more fibrant objects and the fibrancy condition is easy to describe. Therefore, we will find that one model
structure is often more advantageous than the other, depending on the application in mind. Moreover, it is possible to
establish important properties of one model structure with the help of the properties of the other. Therefore, we will consider all these
model structures.
\vskip .3cm
By viewing any simplicial presheaf as a diagram with values
in simplicial sets, we obtain the model-structure, where weak-equivalences and
cofibrations are defined {\it object-wise} and fibrations are defined using the
right-lifting property with respect to trivial cofibrations. (i.e. A map of simplicial presheaves $P' \ra P$ is a cofibration
(weak-equivalence) if for each $U \eps {\rm {Sm/S}}$, the induced map $\Gamma (U, P') \ra \Gamma (U, P)$ is
a cofibration (a weak-equivalence, \res).)  Observe as a
consequence, that all objects are cofibrant and cofibrations are 
simply monomorphisms. This is the {\bf object-wise model structure}, often also
called the {\it injective model structure}: but we will refer to this always as the
object-wise model structure.
\vskip .3cm
 All the above sites have enough points: for the \'etale site these are the geometric
points of all the schemes considered, and for the Nisnevich and Zariski sites these are
the usual points (i.e. spectra of residue fields ) of all the schemes
considered. Therefore, it is often convenient to provide
 the simplicial topoi above with the {\bf  local injective model structure} where the
cofibrations
are defined as before, but the weak-equivalences are maps that induce
weak-equivalences on the stalks and fibrations are defined by the lifting
property with respect to trivial cofibrations.
 By providing the appropriate topology $?$ on the category ${\bS/S}$, one
may define a similar  injective model structure on $\PSh$ and on $\PSh/S$. Since this
depends on the topology $?$, we will denote this model  structure by
$\PSh_{?}$ and $\PSh/{S_?}$.  We will often refer to this model structure simply
as the injective model structure.
\vskip .3cm
{\bf The projective model structure} on $\PSh$ is defined by taking fibrations
(weak-equivalences) to be maps that are fibrations (weak-equivalences, \res) object-wise, i.e. of  simplicial
sets on taking sections over each object in the category $\bS _{\B}$. The cofibrations are
defined by the left-lifting property with respect to trivial fibrations. This model structure
is cofibrantly generated, with the generating cofibrations $\oI$ (generating trivial cofibrations $J$)
defined as follows.
\be \begin{align}
     \label{IJ.1}
\oJ&= \{ \Lambda[n] \times X  \ra  \Delta [n] \times X|n > 0\} \mbox{ and }\\
\oI&=  \{ \delta \Delta [n] \times X \ra  \Delta [n] \times X|n \ge 0\} \notag
    \end{align}\ee
\vskip .3cm \noindent
Here $X$ denotes an object of the category $\bS$. 
Replacing $\times$ with $\rtimes$ and $X$ with $h_X$, where $h_X$ denotes the corresponding pointed presheaf represented by $X_+$ defines the projective model structure on 
category of pointed simplicial presheaves, $\PSh_*$. (We use the convention that $ \phi  \rtimes  h_X  = \B$ where $\phi$ denotes the empty set (obtained above
as $\delta \Delta[0]$).) Replacing $\Lambda[n]$ ($\delta \Delta[n]$, $\Delta [n]$, $X$) by the pointed objects $\Lambda[n]_{+_S}$ 
($\delta \Delta[n]_{+_S}$, $\Delta [n]_{+_S}$, $X_{+_S}$) and $\times$ with $\wedge_S$, provides the corresponding projective model structure on 
$\PSh/S$.
\vskip .3cm
\begin{remark} One may see by taking $n=0$ that the objects $X$ ($h_X$) are cofibrant in $\PSh$ ($\PSh_*$, \res.) 
Now one may use ascending induction on $n$ to see that
 the sources (and hence the targets) of the maps in $\oI$ and $\oJ$ are cofibrant.
\end{remark}
We provide the following result that enables one to easily deduce model structures with good properties on $\PSh/S$ starting with model structures
on $\PSh$. We begin by defining a {\it free functor}
\be \begin{equation}
 \label{free.1}
\F: \PSh \ra \PSh/S \mbox{ by } \F(P) = P_{+_{S}} = P \sqcup S
\end{equation} \ee
\vskip .3cm 
Observe that since every object $U \eps \bS$ has a unique (structure) map to $\B$ and all maps in the category $\bS$ are over $\B$, it follows that the 
presheaf corresponding to $\B$ is just the trivial pointed presheaf $*$. Therefore, every $P \eps \PSh$ has a unique map to $\B$ (the latter viewed
as the corresponding presheaf). Since $S$ is provided with a chosen map $\B \ra S$, we define a map $P_{+_{S}} = P \sqcup S \ra S$ by
sending $P \ra \B \ra S$ and $S$ by the identity to $S$. One also defines a map $S \ra P_{+_{S}}$
by mapping $S$ by the identity to the summand $S$. These show that the functor $\F$ takes values in $\PSh/S$. The functor $\F$ has a right adjoint, namely
the underlying functor $U$ that sends a simplicial presheaf in $\PSh/S$ to the same simplicial presheaf forgetting the pointing by $S$.
\begin{proposition}
 \label{passge.to.the.rel.case}
Let $I$ ($J$) denote the generating cofibrations (generating trivial cofibrations) for a cofibrantly generated symmetric monoidal model structure
on $\PSh$. Let $S$ denote a fixed object in $\bS$. Then $\F(I), \F(J)$ defines a cofibrantly generated symmetric monoidal category
structure on $\PSh/S$. Moreover, in the resulting model category on $\PSh/S$, a map $f$ is a cofibration (fibration, weak-equivalence) if and 
only if $U(f)$ is a cofibration (fibration, weak-equivalence, \res) in $\PSh$.
\end{proposition}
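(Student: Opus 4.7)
The plan is to apply Kan's transfer theorem (cf.\ \cite{Hirsch}) along the adjunction $\F \dashv U$ between $\PSh$ and $\PSh/S$. The linchpin observation is that $U: \PSh/S \to \PSh$ preserves all \emph{connected} colimits: given a connected diagram in $\PSh/S$, its colimit in $\PSh$ carries a canonical structure map to $S$ and a canonical section from $S$ induced by the universal property from the compatible structure maps and sections of the diagram. In particular $U$ preserves pushouts and filtered colimits; it fails to preserve colimits only through the initial object, since $U(\emptyset_{\PSh/S}) = U(S) = S$ rather than $\emptyset_{\PSh}$. From this one immediately gets that the domains of $\F(I)$ and $\F(J)$ are small relative to the relevant classes in $\PSh/S$, because smallness transfers along a left adjoint whose right adjoint preserves the relevant filtered colimits.

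Second, one must verify the acyclicity condition, namely that every relative $\F(J)$-cell complex in $\PSh/S$ is a weak equivalence, i.e.\ has $U$-image a weak equivalence in $\PSh$. Consider a pushout of $\F(j): A\sqcup S \to B\sqcup S$ along $\F(A) \to X$ in $\PSh/S$, where $j \in J$. Applying $U$ and using preservation of pushouts, one obtains the pushout in $\PSh$ of $U\F(j) = j \sqcup \id_S$ along $A \sqcup S \to U(X)$. Now $j \sqcup \id_S$ is a trivial cofibration in $\PSh$ (the coproduct of a trivial cofibration with an identity is again one, by $I$-cell/$J$-cell arguments in the assumed cofibrantly generated structure on $\PSh$), so its pushout $U(X) \to U(Y)$ is a trivial cofibration. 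Closure of weak equivalences under transfinite composition, combined with $U$ preserving such composites, gives the acyclicity for all relative $\F(J)$-cell complexes.

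Kan's lifting theorem then yields the cofibrantly generated model structure on $\PSh/S$ with $\F(I)$, $\F(J)$ as generators, in which $f$ is a fibration (resp.\ weak equivalence) precisely when $U(f)$ is one in $\PSh$. The cofibration characterization goes by two standard steps: the implication $\Rightarrow$ follows because $U$ sends relative $\F(I)$-cell complexes to relative $I$-cell complexes (preservation of pushouts and sequential colimits) and preserves retracts; the converse is by a factor-and-retract argument, factoring $f$ in $\PSh/S$ as a cofibration followed by a trivial fibration, applying $U$, and using the LLP of $U(f)$ against the trivial fibration. Right- and left-properness, together with tractability in the combinatorial setting, can be transferred along the same adjunction.

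Finally, the symmetric monoidal model structure requires checking the pushout-product axiom for $\wedge^S$ on $\PSh/S$. Using the explicit formula \eqref{smash.over.S}, one computes the pushout-product of two maps of the form $\F(i)$ and $\F(i')$ by reducing the amalgamated sum over $S$ to a pushout-product in $\PSh$; the key identity is a direct verification that $\F(P) \wedge^S \F(Q)$ decomposes in a way that factors through $\F(P \wedge Q)$ modulo the diagonal section, letting one invoke the pushout-product axiom already assumed for $\wedge$ on $\PSh$. This last step is where I expect the main obstacle: carrying the monoidal part of the argument through $\wedge^S$ is more delicate than the corresponding argument for $\wedge$, because the quotient in \eqref{smash.over.S} involves the section $S \to P$ as well as the structure map $P \to S$, and one must track both carefully to see that the pushout-product of two $\F$-generated cofibrations lands back in the class of cofibrations (and is trivial whenever either input is).
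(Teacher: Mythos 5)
Your proposal follows the paper's own route: Kan's transfer theorem along $\F \dashv U$, with smallness transferred through the adjunction (your observation that $U$ preserves connected colimits doing the work of the paper's "outer square is cocartesian" step), the same factor-and-retract argument for cofibrations, and the monoidal structure handled by computing $\wedge^S$ of free objects. The worry you flag at the end dissolves once you note the exact identity $\F(P)\wedge^S\F(Q)\cong\F(P\times Q)$, i.e.\ $A_{+_S}\wedge^S B_{+_S}\cong(A\times B)_{+_S}$ — this is precisely the paper's key observation, and it reduces the pushout-product axiom for $\wedge^S$ on $\PSh/S$ directly to that for $\times$ on $\PSh$ with nothing further to track.
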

\begin{proof} To see that one obtains a cofibrantly generated model category this way, it suffices to prove the following (see
 \cite[Theorem 11.3.2]{Hirsch}). 
\vskip .3cm
(i) $U$ takes relative $\F(J-cells)$ to weak-equivalences and (ii) $\F(I)$ and $\F(J)$ permit small object argument, i.e. the domains of
$\F(I)$ ($\F(J)$) are small relative to $\F(I)$ ($\F(J)$).
\vskip .3cm
One begins by observing that if 
\vskip .3cm
\xymatrix{{\sqcup _{\alpha} \F(A_{\alpha})} \ar@<1ex>[r] \ar@<1ex>[d] & C \ar@<1ex>[d] \\
{\sqcup _{\alpha} \F(B_{\alpha})} \ar@<1ex>[r] & D }
\vskip .3cm \noindent
is a co-cartesian square, then so is the outer square in
\vskip .3cm
\xymatrix{{\sqcup_{\alpha} A_{\alpha}} \ar@<1ex>[r] \ar@<1ex>[d] &{\sqcup _{\alpha} \F(A_{\alpha})} \ar@<1ex>[r] \ar@<1ex>[d] & C \ar@<1ex>[d] \\
{\sqcup B_{\alpha}} \ar@<1ex>[r] &{\sqcup _{\alpha} \F(B_{\alpha})} \ar@<1ex>[r] & D .}
\vskip .3cm \noindent
This is clear since both squares are co-cartesian. It follows therefore, by taking each of the maps $A_{\alpha} \ra B_{\alpha}$ to be in
$I$ ($J$) that $U(\F(I-cell)) \subseteq I-cell$ and $U(\F(J-cells)) \subseteq J-cell$. This readily proves (i) since every $J-cell$ is a weak-equivalence.
Let $A \ra B$ be in $I$ and let $\{C_0 \ra \cdots C_{\alpha}\ra C_{\alpha+1} \ra \cdots\}$ denote a direct system of maps where each
map $C_{\alpha} \ra C_{\alpha+1}$ belongs to $\F(I-cell)$. Assume one is given a map $\F(A) \ra \colim_{\alpha} C_{\alpha}$. Then, by adjunction,
this corresponds to a map $A \ra U(\colim_{\alpha} C_{\alpha}) = \colim_{\alpha} U(C_{\alpha})$. Now each structure map $U(C_{\alpha}) \ra U(C_{\alpha +1})$
 is a map in $I-cell$, by the observation above. Since the domains of $I$ are small relative to $I-cell$, it follows that the  map
$A \ra U(\colim_{\alpha} C_{\alpha}) = \colim_{\alpha} U(C_{\alpha})$ factors through some $U(C_{\alpha_0})$. This proves the domains of $\F(I)$ are  small relative to $\F(I)-cell$ and a similar argument proves
the corresponding statement for $\F(J)$. Therefore, these complete the proof of the existence of a cofibrantly generated model structure on $\PSh/S$.
\vskip .3cm
Clearly a map $f:E \ra B$ is a fibration (weak-equivalence in $\PSh/S$ if and only if $U(f)$ is a fibration (weak-equivalence) in $\PSh$.
Next let $i:C \ra D$ denote a map in $\PSh/S$ so that $U(i)$ is a cofibration and let 
\vskip .3cm
\xymatrix{{C} \ar@<1ex>[r] \ar@<1ex>[d] & {E} \ar@<1ex>[d]\\
{D} \ar@<1ex>[r] & {B}}
\vskip .3cm \noindent
denote a commutative square in $\PSh/S$ with $E \ra B$ a trivial fibration. Since $U(i)$ is a cofibration, one obtains a lifting $U(D) \ra U(E)$.
However, since all the maps in the above square are maps in $\PSh/S$, they preserve the structure of being pointed by $S$. Therefore, the
above lifting also preserves the pointing, so that it is a map in $\PSh/S$. Therefore $i$ is a cofibration $\PSh/S$. Conversely, if $i$ is
a cofibration $\PSh/S$, then it is a retract of an $\F(I-cell)$ and therefore, $U(i)$ is a retract of an $U(\F(I-cell))$. Since every $U(\F(I-cell))$ is
an $I-cell$, $U(i)$ is a retract of an $I-cell$. Therefore $U(i)$ is a cofibration in $\PSh$.
\vskip .3cm
Now it suffices to observe that pushout-product axiom is satisfied. Here the key-observation is that $A_{+_{S}} \wedge _S B_{+_{S}} $ identifies
canonically with $(A \times B) _{+{S}}$, for any two $A, B \eps \PSh$. Therefore, the pushout-product axiom in $\PSh/S$ reduces to the
pushout-product axiom in $\PSh$. 
\end{proof}
\begin{remark} In case $S = \B$, then the model category $\PSh/S$ reduces to $\PSh_*$. (Recall there is one and only one unique map from
any $U \eps \bS$ to $\B$ commuting with the structure maps to $\B$: therefore $\B$ represents the trivial presheaf $*$.)
 \end{remark}
Next we summarize a few useful observations in the following proposition.  
\begin{proposition}
(i) All the above model structures on $\PSh$, $\PSh_*$ and $\PSh/S$ are {\it locally presentable}, {\it combinatorial} and {\it tractable} model categories
which are, in particular, cofibrantly generated. They are also left and right proper. The projective model category structure is also weakly finitely generated in the 
sense of \cite[Definition 3.4]{Dund1}  and cellular.
\vskip .3cm
(ii) If $f:P \ra Q$ is a cofibration in the projective model structure, it is a cofibration in the object-wise 
and local injective model structures.
Any injective map of simplicial presheaves is a cofibration in the object-wise model and local injective model 
structures. Any fibration in the local injective model structure is a fibration in the object-wise
model structure and any fibration in the object-wise model structure is a fibration in the projective model structure. The stalks of
any fibration in the projective model structure are fibrations of pointed simplicial sets.
\vskip .3cm 
(iii) If $P \eps \PSh$, $\PSh_*$ or $P \eps \PSh/S$ is fibrant in the object-wise model
structure or projective model structure, $\Gamma (U, GP)= \holimD \Gamma(U, G^{\bullet}P)$ is a fibrant
pointed simplicial set, where $G^{\bullet}P$ is the cosimplicial object defined
by the Godement resolution and $U$ is any object in the site.
\vskip .3cm
(iv) 
The  functor $a:{\rm {PSh}}(\bS) \ra {\rm {Sh}}(\bS_ {?})$
that sends 
a presheaf to its associated sheaf sends fibrations to maps that are fibrations stalk-wise and
weak-equivalences to maps that are weak-equivalences stalk-wise, when ${\rm {PSh}}(\bS)$
is  provided with the 
projective model structure. The same functor sends cofibrations to injective maps (monomorphisms) of the associated sheaves.
\end{proposition}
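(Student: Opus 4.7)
My plan is to address the four parts in the order stated, since the later claims build on the cofibration/fibration comparisons of (ii). For (i), the starting point is that pointed simplicial sets form a combinatorial, proper, cellular, simplicial symmetric monoidal model category. Since $\PSh$ and $\PSh_*$ are the categories of functors from $\bS^{op}$ (small) to (pointed) simplicial sets, the standard theory of combinatorial model structures on diagram categories (as in \cite[Appendix A]{Lur} and \cite{Hirsch}) immediately yields the object-wise, local injective, and projective model structures with all the listed properties: local presentability, combinatoriality, left- and right-properness, cellularity. Tractability is the observation, already made in the remark preceding this proposition, that the generating cofibrations $\oI$ and trivial cofibrations $\oJ$ of \eqref{IJ.1} have cofibrant sources. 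Weak finite generation of the projective structure then follows from the finiteness of the sources and targets of $\oI$ and $\oJ$. For $\PSh/S$ all four conclusions transfer along the free/underlying adjunction via Proposition~\ref{passge.to.the.rel.case}.

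For (ii), each generating projective cofibration in \eqref{IJ.1} is a monomorphism, and monomorphisms are closed under pushout, transfinite composition and retract; hence every projective cofibration is a monomorphism, i.e.\ an object-wise (and local injective) cofibration. The fibration comparisons are dual: the three classes of trivial cofibrations satisfy \emph{projective} $\subseteq$ \emph{object-wise} $\subseteq$ \emph{local injective}, so by the right-lifting property the fibration classes are ordered in the reverse direction. The stalk statement uses that a stalk is a filtered colimit of sections and that filtered colimits in pointed simplicial sets preserve Kan fibrations.

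For (iii), any $P$ fibrant in either the object-wise or projective structure has $\Gamma(U,P)$ a Kan complex for every $U$ (projective fibrancy implies object-wise fibrancy by (ii)), and hence the stalks $P_x$ are Kan as filtered colimits of Kan complexes. Since $G^{\bullet}P$ is built by iterating the stalk-then-skyscraper endofunctor, each $\Gamma(U, G^n P)$ is a product of Kan complexes and therefore Kan, so the Reedy-fibrant $\holimD$ of this cosimplicial Kan object is again Kan. For (iv), sheafification $a$ is left exact and so preserves monomorphisms, giving the cofibration claim. Moreover $a$ commutes with stalks, so if $P \to Q$ is a projective fibration (weak-equivalence) - i.e.\ section-wise, hence stalk-wise, a fibration (weak-equivalence) of Kan complexes - the induced map $aP \to aQ$ has the same stalks and remains a stalk-wise fibration (weak-equivalence) of the associated sheaves.

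The main obstacle I expect is a clean verification of cellularity and tractability in the presence of the pointing-by-$S$ data and the relative smash product $\wedge^S$ of \eqref{smash.over.S}. Although the transferred cofibrantly generated structure on $\PSh/S$ comes for free from Proposition~\ref{passge.to.the.rel.case}, checking that the domains of $\F(\oI)$ are cellularly small in the sense of Hirschhorn and that the pushout defining $G/^S F$ is compatible with the pushout-product axiom requires some careful bookkeeping. A secondary technical point is the Reedy fibrancy of the cosimplicial object $\Gamma(U, G^{\bullet}P)$ needed to conclude that its $\holimD$ is Kan; this should reduce to the standard fact that the Godement cosimplicial resolution is a Reedy-fibrant replacement whenever $P$ is object-wise fibrant, combined with the cofinality of points of the site.
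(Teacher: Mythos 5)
Your proposal follows essentially the same route as the paper: (i) via local presentability of the underlying diagram categories and transfer to $\PSh/S$ through Proposition~\ref{passge.to.the.rel.case}, (ii) by comparing monomorphisms and trivial cofibration classes, (iii) by propagating level-wise Kan-fibrancy through the Godement tower and appealing to B-K, and (iv) by left-exactness of sheafification and commutation with stalks. However, one of your parenthetical justifications in (iii) has the implication reversed: you write ``projective fibrancy implies object-wise fibrancy by (ii),'' but (ii) establishes exactly the opposite containment --- local injective fibrations $\subset$ object-wise fibrations $\subset$ projective fibrations, because the corresponding trivial cofibration classes nest the other way. The error is harmless here because what you actually need for (iii) is that $\Gamma(U,P)$ be a Kan complex for every $U$, and this is available for both hypotheses --- for object-wise fibrancy via the correctly-oriented (ii), and for projective fibrancy simply by the definition of projective fibrations as section-wise Kan fibrations --- but the sentence as written is false and should be corrected. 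The concern you raise about Reedy fibrancy of $\Gamma(U, G^{\bullet}P)$ is also a red herring: the Bousfield--Kan $\holimD$ of any level-fibrant cosimplicial pointed simplicial set is already fibrant (this is the content of the Fibration Lemma in \cite[Chapter XI, 5.5]{B-K} that the paper cites), so no Reedy-fibrant replacement need be arranged separately.
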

\begin{proof}  (i) Every simplicial presheaf is a filtered colimit of finite colimits of simplicial presheaves of 
the form $\Delta [n] \times X$, $X \eps \bS$. This proves that the category $\PSh$ is locally presentable.  That it is 
cofibrantly generated is well-known: see \cite{Dund2} and also \cite[Proposition A.2.8.2]{Lur}. Moreover the choice of the sets $I$ and $J$ show that the domains 
and codomains of the maps there are cofibrant. It follows the above model structures are tractable and therefore also
combinatorial: see \cite{Bar} for the terminology. Very similar arguments apply to the categories $\PSh_*$ and $\PSh/S$.
\vskip .3cm
In fact if $P \eps \PSh/S$, then viewing $P$ as an object in $\PSh$ and
applying the last result shows it is a filtered colimit of finite colimits of presheaves of the form $\Delta[n] \times X$.
Since $P$ is provided with a structure map $p_P:P \ra S$, each of the above $\Delta[n] \times X$ forming the above colimit
also has a structure map to $S$ and that these are also compatible. i.e. A $P \eps \PSh/S$ is a filtered colimit of finite colimits
of objects of the form $(\Delta[n] \times X)_+$ in $\PSh/S$, with the direct system being also in $\PSh/S$, which shows $\PSh/S$ is also
locally presentable. In view of Proposition ~\ref{passge.to.the.rel.case}, now it follows that $\PSh/S$ is
also a combinatorial and also tractable model category.
\vskip .3cm 
  Observe that weak-equivalences in the object-wise and projective model structures
are the same. Since cofibrations are defined object-wise in both the object-wise and
local injective model structures, it follows that a pushout along any such map sends weak-equivalences to weak-equivalences. 
(To see this in the local injective model structure, observe that a cofibration remains a cofibration at each stalk and pushouts commute with
taking stalks.) Therefore,
both the object-wise and injective model structures are left-proper. Since any cofibration in the projective model structure is also
a cofibration in the object-wise model structure and the weak-equivalences in these two model structures are the same, it follows that the
projective model structure is also left-proper. 
\vskip .3cm
In order to prove the remaining assertions in (i), one needs to use the assertions in (ii) which are all clear from the definitions.
The right-properness is clear in the projective model structure. Since
any map that is a fibration in the  object-wise model structure is also a fibration in the projective model structure, and the 
weak-equivalences in both model structures are the same, 
it follows that the object-wise structure is also right proper. Since any fibration in the local injective model structure is a fibration in
the projective model structure and the stalks of a fibration in the projective model structure are fibrations of pointed simplicial sets,
one sees that the local injective model structure is also right proper. The cellularity is clear for the projective model structure from the 
choice of the generating cofibrations and generating trivial cofibrations. See \cite[Definition 3.4]{Dund1} for the
definition of a weakly finitely generated model category. The weak finite generation is clear for the model structures 
we are considering in view of the choice of the sets $\oI$ and $\oJ$ as in ~\eqref{IJ.1}. These prove all the statements in (i) and (ii).
\vskip .3cm
The proof of (iii)
amounts to the observation that if $F$ is a  pointed simplicial presheaf that is
a fibration object-wise, then the
 stalks of $F$ are fibrant pointed simplicial sets. It follows that, therefore, 
$\Gamma (U, G^nF)$ are
all fibrant pointed simplicial sets. (See \cite[Chapter XI, 5.5 Fibration
lemma]{B-K}.) The first statement in (iv) follows from the observation that
fibrations and weak-equivalences are defined object-wise and these are preserved
by the filtered colimits involved in taking stalks.
The second statement in (iv) follows from the observation that a 
 map of sheaves is a monomorphism if and only 
if the induced maps on all the stalks are monomorphisms and the observation that any cofibration of presheaves is an objectwise
monomorphism in both the projective and object-wise model structures.
\end{proof} 
\vskip .4cm
 Let $X$, $Y$ denote two objects in $ \bS$ and let $f: X \ra Y$
denote a  morphism of $\bS$. Then $f$ induces functors: 
\be \begin{align}
\label{maps.of.sites}
f^{-1}: ({\bS/Y}) &\ra ({\bS/X}), f^{-1}(V) = V{\underset {Y} \times}X, \hbox{ and}\\
f_!: ({\bS/X}) &\ra ({\bS/Y}), f_!(U) = U. \notag \end{align} \ee
\vskip .3cm \noindent
The functor $f^{-1}$ now defines functors
\be \begin{equation}
     \label{map.of.topoi.1}
f_*:\PSh_X \ra \PSh_Y \mbox{ and }  f^*: \PSh_Y \ra \PSh_X 
    \end{equation} \ee
\vskip .3cm \noindent
with the former right-adjoint to the latter. The functor $f_*$ is defined by  $P \mapsto P \circ f^{-1}$. 
(One also obtains similar functors
at the level of sheaves.) Observe that the given map $f: X \ra Y$ induces a 
map of presheaves $Y \ra f_*X$ (as may be seen by applying these to a $V \eps \bS/Y$:
$Hom_Y(V, Y) \ra Hom_Y(V, f_*X) =Hom_X(V{\underset Y \times}X, X)$). Therefore, any
$P \eps \PSh/Y$ has a unique map $P \ra Y \ra f_*X$ which by adjunction corresponds
to a map $f^*P \ra X$. 
\vskip .3cm
One first sees that for a representable presheaf $P$, the composition $P \circ f_!$ identifies
with $f^*(P)$: since $f^*$ has a right-adjoint (namely $f_*$) it commutes with colimits and
therefore $P \circ f_!$ identifies with $f^*(P)$. This will show the functor $f^*$ has a left-adjoint
which we denote by $f_{\#}$. Observe that, now $Hom_X(P, X) = Hom_X(P, f^*(Y)) = Hom_Y(f_{\#}(P), Y)$.
 It follows, in view of the above adjunction, that $f_{\#}$ commutes with colimits,
and that when $f$ is a morphism of $\bS$, $f_{\#}(U) = U$ for any $U \eps {\rm {\bS/X}}$.
We see readily that, when $f$ is a morphism of $\bS$ and when $f$ itself has a section,  the functors $f^*$ and $f_{\#}$ define induced functors
\be \begin{equation}
     \label{map.of.topoi.2}
f_{\#}:\PSh/X \ra \PSh/Y \mbox{ and } f^*: \PSh/Y \ra \PSh/X
    \end{equation} \ee
\vskip .3cm \noindent
with $f_{\#}$ left-adjoint to $f^*$. (In fact $f^*$ exists even if $f$ does not have section.)
Since we do not make use of the above functors in ~\eqref{map.of.topoi.2} except those
in ~\eqref{map.of.topoi.1}, we skip any further discussion on the functors in ~\eqref{map.of.topoi.2}  presently.
\vskip .3cm
In case $Y=  \B$ with $f=p_X$ the structure map
of $X$, then $f^*$ is simply the restriction of
a presheaf (sheaf) to  ${\bS/X}$. Therefore if $F \eps {\rm {PSh}}({\bS})$, we will denote this restriction of $F$ to ${\bS/X}$
by $F_{|X}$. In general, neither of the functors $f^*$ nor $f_*$ preserve
any of the model category structures so that one needs to consider the
appropriate derived functors associated to these functors. The left-derived
functor of $f^*$, $Lf^*$, may be defined using representables  as in \cite[Chapter 2]{MV} and
the right derived functor of $f_*$, $Rf_*$, may often be defined using the Godement
resolution in view of the observation that all our sites have enough points. In case $f$ itself is a  morphism of $\bS$,
one can readily show that $f^*$ preserves
cofibrations and weak-equivalences in  all three of the above model
structures. In the case $f$ is again assumed to be a morphism of $\bS$, $Rf_*$
preserves fibrations and weak-equivalences in the  projective
model structures. (To see this one first observes that filtered colimits of fibrations of simplicial sets are fibrations, so that each 
 $f_*G^n$ will preserve object-wise fibrations. Now 
homotopy inverse limits preserve fibrations.)

\subsubsection{\bf  Convention} 
\label{convent.0} By default we will assume {\it either the  projective or the object-wise model structure} with
respect to one
of the given Grothendieck topologies. Objects of $\PSh(\bS)$  will often be referred to
as {\it spaces}. 

\vskip .3cm
\subsection{\bf Equivariant presheaves and sheaves}
As we pointed out earlier we allow the group $\group$ to be one of the following:
\vskip .3cm
(i) a finite group,
\vskip .3cm
(ii) a profinite group or,
\vskip .3cm
(iii)  a presheaf of groups on $\bS$.
\vskip .3cm \noindent
It is often convenient to view a finite or profinite group $\group$ also as a group object in $\bS$ (over $\B$) by replacing $\group$ by
$\group \otimes \B = \sqcup_{\group} \B$, with the group structure on $\group \otimes \B$ induced from the group structure of $\group$. 
In all these cases, one defines a $\group$-equivariant presheaf of sets (pointed
sets) on ${\bS/\B}$ to be a presheaf  $P$ on ${\bS/\B}$ taking
values in the category of sets (pointed sets provided) with an action by $\group$. i.e. One
defines $\group$-equivariant presheaves of simplicial sets, pointed simplicial
 sets and $\group$-equivariant sheaves of simplicial sets and pointed simplicial
sets similarly. 
Henceforth $\PSh^{\group}$ ($\Sh_{?}^{\group}$) will denote the category of
$\group$-equivariant pointed simplicial presheaves (sheaves on the $?$-topology, \res).  
Observe that $ P \eps \PSh^{\group}$ if one is provided with actions of $\group(U) $ on $P(U)$, which are compatible as
$U$ varies in $\bS$. For a fixed $S \eps \bS$, if $\group$ is a group-object defined over $S$, 
$\PSh/S^{\group}$ ($\Sh/S_?^{\group}$) will 
denote the corresponding categories of pointed simplicial presheaves (simplicial sheaves) pointed by $S$. Clearly this 
category is closed under all small limits and colimits.
\vskip .3cm 
\begin{remark} Our discussion through ~\ref{Phi.Theta.props} will only consider the categories $\PSh^{ \group}$ and $\PSh/S^{\group}$
 explicitly though everything that is said here applies equally well to the localized
categories of simplicial presheaves considered in ~\ref{desc:a1} as well.
\end{remark}

\subsubsection{\bf Simplicial sets with action by a finite (or discrete) group $\group$}
\label{simpl.sets}
A related context that comes up, especially in the context of \'etale realization is 
the following: let $(simpl.sets)$ ($(simpl. sets)_*$) denote the category of simplicial sets (pointed simplicial sets)
and let $(simpl. sets, \group)$ ($(simpl. sets, \group )_*$) denote the category of all simplicial sets (pointed simplicial sets)
with actions by the finite (or discrete) group $\group$ with the morphisms in this case being equivariant
maps. By considering the constant presheaves associated to a simplicial set (pointed simplicial set),
we may imbed this category into $\PSh^{\group}$ ($\PSh_*^{\group}$, \res). We let $C$ denote the functor 
associating a simplicial set with its associated constant presheaf.
\vskip .3cm
To obtain an imbedding of $(simpl. sets, \group)$ 
into the category $\PSh/S^{\group}$, we simply take the composition of the
two functors $\F \circ C: (simpl. sets, \group) \ra \PSh/S^{\group}$, where $\F$ denotes the free functor in ~\eqref{free.1}.
\vskip .3cm
At least when $\group$ is trivial, one may readily see that both the functors $C$ and $\F$ are left Quillen functors at the level of
the associated model categories when $\PSh^{\group}$, $\PSh_*^{\group}$ and $\PSh/S^{\group}$ are provided with the object-wise model
structures.
\subsubsection{\bf Simplicial presheaves with continuous action by the group $\group$} 
\label{cont.action}
\vskip .3cm
We let $\W$ denote a family of subgroups of $\group$ so that it has at least the following properties 
\be  \begin{enumerate}[ \rm (i)]
        \item it is an inverse system ordered by inclusion,
        \item if $H \eps \W$, $H_{\group}$ =the core of $H$, i.e. the largest subgroup of $H$ that is normal in $\group$ belongs to $\W$ and
         \item if $H \eps \W$ and $H' \supseteq H$ is a subgroup of $\group$, then $H' \eps \W$. 
\end{enumerate} \ee
In the case $\group$ is a finite
group, $\W$ will denote all subgroups of $\group$ and when $\group$ is profinite, it will denote all subgroups of finite index in 
$\group$. When $\group$ is a presheaf of groups, we will leave $\W$ unspecified for now. When $\bS$ denotes a category of schemes with the 
terminal object $\B$ and $\group$ denotes a smooth affine group-scheme defined over the base-scheme $\B$, we will again leave $\W$ unspecified, for now, 
but nevertheless
 require that each $H \eps\W$ is a closed sub-group scheme of $\group$ and that both $H$ and the homogeneous space $\group/H$ are defined over 
$\B$.
 Clearly the family of all closed subgroup-schemes 
of a given smooth affine group-scheme that are defined over $\B$ (when the base scheme $\B$ is a field) satisfies all of the above properties 
(see \cite[12.2.1 Theorem]{Sp}), so that we may use this as a default choice of $\W$ when nothing 
else is specified.
\vskip .3cm
For the most part, we will only consider $\group$-equivariant
presheaves of pointed simplicial sets $P$ on which the action of $\group$ is {\it
continuous}, i.e. for each object $U \eps \bS/S$ and each 
 section $s \eps \Gamma (U, F)_n$, the stabilizer $Z(s)$ of $s$ belongs to $\W$.
This full category (the subcategory pointed over $S$) will be denoted $\PSh^{\group, c}$ ($\PSh/S^{\group,c}$, \res).
\vskip .3cm
Clearly the above terminology is taken from the case where the group $\group$ is profinite.
 Observe that, in this case the intersection of the conjugates 
$\bigcap _{g} g^{-1}Z(s)g$ of this stabilizer as $g$ varies over a set of coset representatives of 
$Z(s)$ in $\group$ is a normal subgroup of $\group$, contained in $Z(s)$ and  of finite index in $\group$. Therefore $\group$ acts 
continuously on $P$ if and only if
$ \Gamma (U, P) = {\underset {  \{H \mid |\group/H| < \infty, H \lhd \group \}} \colim } \Gamma (U, P)^H$.
\vskip .3cm
For each  subgroup $H \eps\W$, let $P^H$ denote the sub-presheaf
of $P$ of sections fixed by $H$, i.e. $\Gamma (U, P^H) = \Gamma (U, P)^H$. If $H$ is a normal subgroup of $\group$ and 
$\bar H= \group /H$, then
\[\Gamma (U, P)^H = \{s \eps \Gamma (U, P)| \mbox { the action of $\group$ on $s$ factors through } \bar H\}.\]
\vskip .3cm
\subsubsection{The $\group$-equivariant sheaves $\group/H_+$, $H \eps \W$}
\label{GmodH}
We consider this in two distinct contexts, one as an object of $\PSh^{\group}_*$ and then as an object of $\PSh^{\group}/S$ for a fixed $S \eps \bS$.
In the first case if $H \eps \W$, we let $\group/H_+ $ denote the set of orbits of $H$ in $\group$ together with the addition of 
a base point $*$.  Now if $P \eps \PSh_*$, $\group/H_+ \wedge P \eps \PSh^{\group}_*$ is the obvious pointed simplicial presheaf,
where $\group$ acts on the factor $\group/H$. In this case we observe that for a fixed $H \eps \W$, the functor
\[\PSh^{\group}_* \ra \PSh_*, Q \mapsto Q^H \mbox{ has as left adjoint the functor } P \mapsto (\group/H)_+ \wedge P.\]
\vskip .3cm
To define $\PSh/S^{\group}$, we will recall our basic hypotheses on $\group$ and $\W$: that $\group$ is a  group object
defined over $S$
and that for all $H \eps \W$, $H$ is a subgroup object over $S$ with $H$ and $\group/H$ defined over $S$. 
 We let $\group/H \otimes S =\group/H$
as an object over $S$. If, however, $G$ is either a discrete group or a pro-finite group and $H$ is a subgroup, $\group/H$ has no structure as
an object over $S$. Therefore, in this case we let  $\group/H \otimes S = \sqcup_{\group/H}S$.
 This is now an object over $S$ with an obvious action by $\group$. (Observe that, in effect we are replacing $\group$ ($H$) by $\group \otimes S$
($H \otimes S, \res)$ which are group-objects over $S$ and then taking the quotient.)
\vskip .3cm
In both cases, 
 we let $\group/H_{+_{S}} = \F((\group/H) \otimes S)) = (\group/H \otimes S) \sqcup S$ which is an object of $\PSh/S^{\group}$.
 We observe that we obtain the adjunction:
\[\PSh/S^{\group} \ra \PSh/S, Q \mapsto Q^H \mbox{ has as left adjoint the functor } P \mapsto (\group/H)_+ \wedge P.\]
\begin{proposition} 
\label{key.props}
(i) Let $\phi: P' \ra P$ denote a map of simplicial presheaves in $\PSh^{\group}_*$. Then $\phi$ induces a
 map $\phi^H : {P'}^H \ra P^H$ for each subgroup $H \eps \W$. The association 
$\phi \mapsto \phi^H$ is functorial in $\phi$ in the sense that if $\psi: P'' \ra P'$ is another map,
then the composition $(\phi\circ \psi)^H = \phi^H \circ \psi^H$.
 \vskip .3cm \noindent
(ii) Let $\{Q_{\alpha}|\alpha\}$ denote a direct system  of simplicial sub-presheaves of a simplicial
presheaf $Q \eps \PSh^{ \group}_*$ indexed by a small filtered category. If $K$ is any subgroup of $\group$, then
$({\underset {\alpha} \colim} \, Q_{\alpha})^K = {\underset {\alpha} \colim} \, Q_{\alpha}^K$.
\vskip .3cm \noindent
(iii) The full subcategory $\PSh^{\group,c}_*$ of simplicial presheaves with continuous
action by $\group$ is closed under all small colimits, with the small colimits the same as those 
computed in $\PSh^{\group}_*$. 
\vskip .3cm \noindent
(iv) The full subcategory $\PSh^{\group,c}_*$ is also closed under all small limits,
where the limit of a small diagram $\{P_{\alpha}|\alpha\}$ is 
\be \begin{equation}
     \label{lim.cont.action}
{\underset { \{H \eps \W\}} \colim } {\underset {\alpha} \lim } \, P_{\alpha}^H
    \end{equation} \ee
\vskip .3cm \noindent
When the inverse limit above is finite, it commutes with the filtered colimit over $H$, so that
in this case, the inverse limit agrees with the inverse limit computed in $\PSh^{\group}_*$.
\vskip .3cm \noindent
For the remaining statements, we will assume the group $\group$ is {\it profinite}.
\vskip .3cm \noindent
(v) Let $\{P_{\alpha}|\alpha \}$ denote a diagram of objects in $\PSh^{\group, c}_*$ 
indexed by a small filtered category. Let $K$ denote a  subgroup of $\group$ with finite index.
Then $({\underset {\alpha} \colim} \, P_{\alpha})^K = {\underset {\alpha} \colim} \, (P_{\alpha})^K$.
\vskip .3cm \noindent
(vi) The  statements corresponding to the above also hold for the category $\PSh/S^{\group}$ and $\PSh/S^{\group,c}$.
\end{proposition}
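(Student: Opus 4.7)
The plan is to handle the six parts sequentially, with part (v) being the main technical step. Part (i) is immediate, since any $\group$-equivariant $\phi : P' \to P$ restricts to a map $\phi^H : {P'}^H \to P^H$ for each subgroup $H$, and functoriality of $\phi \mapsto \phi^H$ is the restriction of composition to fixed-point sub-presheaves. For (ii), the hypothesis that each $Q_\alpha$ is a sub-presheaf of the common $Q$ means the transition maps in the filtered system are monomorphisms; filtered colimits of monomorphisms of simplicial sets remain monomorphisms, whence any section of $({\underset{\alpha}{\colim}}\, Q_\alpha)(U)$ is represented uniquely in some $Q_\alpha(U)$, and $K$-fixedness in the colimit coincides with $K$-fixedness of the representative.

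For (iii), every small colimit is constructed from coproducts and coequalizers, so continuity need only be checked for these. In a coproduct, a section lives in a unique summand and retains its stabilizer, which is in $\W$ by assumption. For a coequalizer $Q' \rightrightarrows Q \twoheadrightarrow R$, the stabilizer in $R$ of the class of $s \in Q(U)$ contains the stabilizer of $s$ itself, and property (iii) of $\W$ (upward closure) forces this larger stabilizer back into $\W$. For (iv), observe that the naive section-wise limit $L = {\underset{\alpha}{\lim}}\, P_\alpha$ computed in $\PSh^{\group}_*$ may fail to be continuous, since the stabilizer of a compatible tuple $(s_\alpha)$ is $\bigcap_\alpha Z(s_\alpha)$ and may not lie in $\W$. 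However, the continuous sections of $L$ consist precisely of those tuples for which some $H \in \W$ is contained in every $Z(s_\alpha)$, i.e., those originating from ${\underset{\alpha}{\lim}}\, P_\alpha^H$; viewing $\W$ as a filtered category under reverse inclusion (by property (i)) yields the formula~\eqref{lim.cont.action}, and one checks this satisfies the universal property of the limit in $\PSh^{\group,c}_*$. When the diagram is finite, the inverse-system property supplies an $H \in \W$ below the finitely many $Z(s_\alpha)$; since filtered colimits of sets commute with finite limits, the formula collapses to the naive limit.

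The main obstacle is part (v), where one must turn a possibly infinite $K$-fixedness check into a finite check at a single stage. Given $\group$ profinite, $K$ of finite index, and a filtered diagram $\{P_\alpha\}$ of continuous objects, pick a section $s$ of $({\underset{\alpha}{\colim}}\, P_\alpha)(U)$ represented by $s_{\alpha_0} \in P_{\alpha_0}(U)$. Continuity forces $Z(s_{\alpha_0})$ to have finite index in $\group$, so $K \cap Z(s_{\alpha_0})$ has finite index in $K$; let $k_1,\ldots,k_n$ be coset representatives of $K/(K \cap Z(s_{\alpha_0}))$. If $s$ is $K$-fixed in the colimit, then each $k_i s_{\alpha_0}$ and $s_{\alpha_0}$ coincide at some stage $\alpha_i$; filteredness supplies a $\beta$ above every $\alpha_i$, and the image of $s_{\alpha_0}$ in $P_\beta(U)$ is fixed by each $k_i$ as well as by $K \cap Z(s_{\alpha_0})$, hence by all of $K$. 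The reverse containment is evident, establishing the desired equality.

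Part (vi) is obtained by transporting the above arguments verbatim to the $S$-pointed setting. The section-wise descriptions used above (colimits via coproducts and coequalizers, the filtered-colimit formula for limits, and the fixed-point functors) are all compatible with the data of pointing by $S$: this uses the pushout description in~\eqref{S.quotient} for coequalizers in $\PSh/S$, together with Proposition~\ref{passge.to.the.rel.case}, which asserts that the relevant structure in $\PSh/S$ is detected on the underlying $\PSh$-objects. The same arguments then go through, with compatibility with the pointing being automatic from naturality.
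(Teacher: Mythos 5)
Your argument is correct, and it tracks the paper's for (i), (ii), (iv), (vi), but it differs genuinely in two places. For (iii), the paper replaces each $P_\alpha$ by its image $\bar P_\alpha$ in the colimit, notes these form a system of sub-presheaves of $P$, then applies part (ii) and commutes colimits over $\alpha$ and $H$; your reduction to pointed coproducts plus coequalizers, with the observation that the stabilizer of a quotient class $[s]$ contains $Z(s)$ and so lies in $\W$ by upward closure, is cleaner and sidesteps the slight awkwardness in the paper of feeding a not-necessarily-filtered index category into (ii). For (v), the paper's route is formal: it writes each $P_\alpha$ as $\colim_H P_\alpha^H$ over normal $H$ of finite index, interchanges colimits, then uses that the $\group$-action on a presheaf of the form $Q^H$ factors through the finite group $\group/H$ so that taking $K$-invariants is a finite limit commuting with filtered colimits, and finally reassembles. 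Your section-wise element chase — stabilizing a representative $s_{\alpha_0}$, using continuity to get $K\cap Z(s_{\alpha_0})$ of finite index in $K$, choosing finitely many coset representatives, and exploiting filteredness to push past a stage $\beta$ at which all of $K$ fixes the image — proves the same interchange more concretely. Both arguments stand; yours is more elementary, the paper's is more readily reusable as a statement about colimit interchange. One small point worth making explicit in your (v): you should note as well that $\colim_\alpha P_\alpha^K \to \colim_\alpha P_\alpha$ is injective (a filtered colimit of the monomorphisms $P_\alpha^K \hookrightarrow P_\alpha$), so that your surjectivity argument onto $(\colim_\alpha P_\alpha)^K$ really does give the stated equality rather than merely a surjection.
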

\begin{proof} (i) is clear. 
(ii) Observe  that for a simplicial sub-presheaf $Q'$ of $Q$,
 with 
$Q' \eps \PSh^{ \group}_*$, $({Q'})^K = {Q'} \cap Q^K$. Now each $Q_{\alpha}$ maps injectively into $Q$ and the structure maps
of the direct system $\{Q_{\alpha}|\alpha\}$ are all injective maps. Therefore (ii) follows readily.
\vskip .3cm
(iii) Suppose $P= \colim_{\alpha} P_{\alpha}$, where $P_{\alpha} \eps \PSh^{\group,c}_*$. Then one may first replace each $P_{\alpha}$ by
$\bar P_{\alpha} = Image (P_{\alpha} \ra P)$. Then $\group$ acts continuously on each $\bar P_{\alpha}$ and each $\bar P_{\alpha} $ is sub-presheaf of
$P$. Therefore, (ii) applies to show that for each $H \eps \W$, $P^H= (\colim_{\alpha}\bar P_{\alpha})^H = \colim_{\alpha} (\bar P_{\alpha})^H$.
Now $\colim_{H} P^H = \colim_H (\colim_{\alpha}\bar P_{\alpha})^H=
\colim _{\alpha} \colim_H (\bar P_{\alpha})^H = \colim_{\alpha} (\bar P_{\alpha}) = P$. So the action of $\group$ on 
$P$ is continuous.
\vskip .3cm \noindent
(iv) In order to prove (iv), we show that giving  a
 compatible collection of maps $\phi_{\alpha}: P \ra P_{\alpha}$ from a simplicial presheaf $P$ with a continuous action by $\group$ 
 to the given inverse system $\{P_{\alpha}| \alpha \}$ corresponds to giving a map 
$\phi: P \ra {\underset { \{H  \eps \W\}} \colim } {\underset {\alpha} \lim } 
 \, P_{\alpha}^H$.
Observe that the maps $\phi_{\alpha}$ induce a compatible collection of maps
$\{\phi_{\alpha}^H: P^H \ra P_{\alpha}^H\}$ for each fixed $H$. Therefore, one may now take the limit of these
maps over $\{\alpha\}$ to obtain a compatible collection of maps 
$\{\phi^H : P^H \ra {\underset {\alpha } \lim} \, P_{\alpha}^H|H\}$. One may next take the colimit 
over $H$ of this collection to obtain a map
\[\phi: P = {\underset { \{H  \eps \W \}} \colim } \, P^H \ra 
{\underset { \{H \eps \W \}} \colim }{\underset {\alpha } \lim} \, 
P_{\alpha}^H.\]
\vskip .3cm
Clearly the latter maps naturally to  ${\underset {\alpha } \lim} 
{\underset { \{H \eps \W \}} \colim } \, P_{\alpha }^H$.
The latter then projects to 
\vskip .3cm
$P_{\alpha} = {\underset { \{H \eps \W\}} \colim } \, P_{\alpha }^H$. 
Moreover, the 
composition of the above maps $P \ra P_{\alpha}$ identifies with the given map $\phi_{\alpha}$ since the 
corresponding maps identify after applying $(\quad )^H$.
\vskip .3cm 
The action of $\group$
 on ${\underset { \{H \eps \W \}} \colim }   {\underset {\alpha } \lim}  \, P_{\alpha }^H$ is 
continuous. The above arguments show that the latter 
is in fact the inverse limit of $\{P_{\alpha}|\alpha \}$ in the category $\PSh^{ \group,c}_*$. This 
completes the proof of the first statement in (iv). The last statement in (iv) is clear since filtered colimits commute with
finite limits.
\vskip .3cm 
Next we consider (v). Since each $P_{\alpha}$ has a continuous action by $\group$, we first observe that 
\[P_{\alpha} = {\underset { \{H | |\group/H| < \infty, H \lhd \group \}} \colim } P_{\alpha}^H \mbox{ for each } \alpha.\]
Therefore, 
\[{\underset {\alpha} \colim} P_{\alpha} = 
{\underset {\alpha} \colim} {\underset { \{H | |\group/H| < \infty, H \lhd \group\}} \colim } P_{\alpha}^H =
{\underset { \{H | |\group/H| < \infty, H \lhd \group \}} \colim } {\underset {\alpha} \colim} P_{\alpha}^H.\]

Let $Q_H = {\underset {\alpha} \colim} P_{\alpha}^H$; then (since filtered colimits preserve monomorphisms) we see that each $Q_H$ is a sub-simplicial
 presheaf of ${\underset { \{H | |\group/H| < \infty, H \lhd \group \}} \colim } Q_H = 
{\underset {\alpha} \colim} P_{\alpha}$. Then the collection $\{Q_H|H, |\group/H| <\infty, H \lhd \group \}$
satisfies the hypotheses in (iv) so that we obtain:
\[({\underset { \{H | |\group/H| < \infty, H \lhd \group\}} \colim } {\underset {\alpha} \colim} P_{\alpha}^H)^K =
{\underset { \{H | |\group/H| < \infty, H \lhd \group\}} \colim } ({\underset {\alpha} \colim} P_{\alpha}^H)^K.\]
\vskip .3cm
Next observe that, for any simplicial presheaf $Q$ and any fixed normal subgroup $H$ of 
finite index in $\group$, the action of $\group$ on $Q^H$ is through the finite group $\group/H$. Therefore,
$(Q^H)^K = (Q^H)^{\bar K}$, where $K$ is the image of $K$ in $\group/H$. Therefore we obtain the 
isomorphism 
\[({\underset {\alpha} \colim} \, P_{\alpha}^H)^K \cong ({\underset {\alpha} \colim} \, P_{\alpha}^H)^{\bar K} 
 \cong {\underset {\alpha} \colim} \, (P_{\alpha}^H)^{\bar K} \cong {\underset {\alpha} \colim} \, (P_{\alpha}^H)^K.\]
\vskip .3cm \noindent
The second isomorphism follows from the fact that since $\bar K$ is a finite group, taking invariants with respect to $\bar K$ is
a finite inverse limit which commutes with filtered colimits.
Making use of this identification and commuting the two colimits, we therefore obtain the
identification 
\[{\underset { \{H | |\group/H| < \infty, H \lhd \group\}} \colim } ({\underset {\alpha} \colim} \, P_{\alpha}^H)^K =
{\underset {\alpha} \colim} {\underset { \{H | |\group/H| < \infty, H \lhd \group\}} \colim } \, ( P_{\alpha}^H)^K
= {\underset {\alpha} \colim} \, ( P_{\alpha})^K.\]
\vskip .3cm \noindent
The last identification follows from (ii), the assumption that $\group$ acts continuously on each $P_{\alpha}$ and the observation that for each fixed $\alpha$, each 
$P_{\alpha}^H \subseteq P_{\alpha}$ as simplicial presheaves. 
\vskip .3cm
(vi) follows since all of the above arguments are compatible with the structure maps to $S$. 
\end{proof}
\vskip .3cm 
 \subsubsection{\bf Finitely presented objects}
Recall an object $C$ in a category $\C$ is {\it finitely presented} if
$Hom_{\C}(C, \quad)$ commutes with all small filtered colimits in the second
argument. 
\begin{proposition}
\label{finite.pres.1}Let $\group$ denote a profinite group. 
Let $P \eps \PSh^{\group}_*$ be such that
in $\PSh_*$ it is finitely presented and $P =P^H$ for some normal 
subgroup $H$ of finite index in $\group$. Then $P$ is a finitely presented object in
$\PSh^{\group, c}_*$. The same conclusions also hold for a $P \eps \PSh/S^{\group}$.
\end{proposition}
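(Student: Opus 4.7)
The plan is to unwind the $\Hom$ in $\PSh^{\group,c}_*$ so that it becomes a finite limit applied to a $\Hom$ in $\PSh_*$, and then use finitely-presentedness of $P$ in $\PSh_*$ together with Proposition \ref{key.props}(v) to commute the filtered colimit past everything.

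First I would make the following elementary observation: if $f: P \to Q$ is any $\group$-equivariant map in $\PSh^{\group}_*$, then since $P = P^H$, for any $U \in \bS$ and any $s \in \Gamma(U, P)$ we have $h \cdot f(s) = f(h \cdot s) = f(s)$ for all $h \in H$, so $f$ factors through $Q^H \subseteq Q$. Thus
\[
  \Hom_{\PSh^{\group}_*}(P, Q) \;=\; \Hom_{\PSh^{\group/H}_*}(P, Q^H),
\]
and the same identification holds on $\PSh^{\group,c}_*$ since every $Q \in \PSh^{\group,c}_*$ and the continuity condition is automatic on $P$. Because $\group/H$ is a \emph{finite} group, an equivariant map into $Q^H$ is precisely a map of pointed simplicial presheaves satisfying finitely many equivariance equations, i.e.\ $\Hom_{\PSh^{\group/H}_*}(P, Q^H)$ is a finite limit (an equalizer indexed by the finite set $\group/H$) of copies of $\Hom_{\PSh_*}(P, Q^H)$.

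Next, let $\{Q_\alpha\}$ be a small filtered diagram in $\PSh^{\group,c}_*$. By Proposition \ref{key.props}(iii) the colimit $\colim_\alpha Q_\alpha$ in $\PSh^{\group,c}_*$ agrees with the one in $\PSh^{\group}_*$, which in turn is computed objectwise in $\PSh_*$. Since $H$ is a normal subgroup of finite index, Proposition \ref{key.props}(v) gives
\[
  (\colim_\alpha Q_\alpha)^H \;=\; \colim_\alpha Q_\alpha^H.
\]
Now the hypothesis that $P$ is finitely presented in $\PSh_*$ gives
\[
  \Hom_{\PSh_*}\bigl(P, \colim_\alpha Q_\alpha^H\bigr) \;=\; \colim_\alpha \Hom_{\PSh_*}(P, Q_\alpha^H).
\]

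Finally I would assemble the pieces. Using the first step to rewrite the equivariant $\Hom$, then the displayed identity for $H$-invariants, then finite-presentedness in $\PSh_*$, and then the fact that in the category of sets (and hence of simplicial sets) finite limits commute with filtered colimits, we get the chain
\[
  \Hom_{\PSh^{\group,c}_*}(P, \colim_\alpha Q_\alpha)
  = \Hom_{\PSh^{\group/H}_*}\bigl(P, \colim_\alpha Q_\alpha^H\bigr)
  = \colim_\alpha \Hom_{\PSh^{\group/H}_*}(P, Q_\alpha^H)
  = \colim_\alpha \Hom_{\PSh^{\group,c}_*}(P, Q_\alpha),
\]
which is the desired finite-presentedness in $\PSh^{\group,c}_*$. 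The same argument runs word for word in $\PSh/S^{\group,c}$ since by Proposition \ref{key.props}(vi) the colimit and invariants computations are identical. The only mildly subtle point — which I would flag as the main thing to verify carefully — is the application of Proposition \ref{key.props}(v): it crucially uses the profiniteness of $\group$ (to exhibit $Q_\alpha$ as the colimit over finite-index normal subgroups of its fixed subpresheaves) and the normality and finite index of $H$, so that the finite group $\group/H$ gives the equivariance a finite-limit description that commutes with the filtered colimit.
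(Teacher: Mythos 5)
Your proof is correct and follows essentially the same route as the paper: both reduce $\Hom_{\PSh^{\group}_*}(P,Q)$ to a $\group/H$-invariance construction applied to $\Hom_{\PSh_*}(P,Q^H)$, invoke finite presentation of $P$ in $\PSh_*$ together with Proposition~\ref{key.props}(v) to commute the filtered colimit past $(\cdot)^H$, and then use that invariants under the finite group $\group/H$ form a finite limit commuting with filtered colimits. The only cosmetic difference is that the paper writes the invariance step as $\Hom(P,Q^H)^{\group/H}$ while you write it as an equalizer indexed by $\group/H$; these are the same observation.
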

\begin{proof} Let $Hom$ denote the external Hom in the category $\PSh_*$ and
let $Hom_{\group}$ denote the external Hom in the category $\PSh^{\group}_*$.
Then $Hom_{\group}(P, Q) = Hom(P, Q)^{\group}$, where $\group$ acts on the set $Hom(P, Q)$ through its actions on 
$P$ and $Q$. Next suppose $P = P^H$ for some normal subgroup $H$ of $\group$ with finite index.
Then $Hom_{\group}(P, Q) \cong Hom_{\group/H}(P, Q^H) = Hom(P, Q^H)^{\group/H}$.
\vskip .3cm
Next let $\{Q_{\alpha}| \alpha\}$ denote a small collection of objects in 
$\PSh^{\group, c}_*$ indexed by a small filtered category. Then,
\[{\underset {\alpha} \colim} Hom_{\group}(P, Q_{\alpha}) = 
{\underset {\alpha} \colim} Hom(P, Q_{\alpha}^H) ^{\group/H}= 
({\underset {\alpha} \colim} Hom(P, Q_{\alpha}^H) )^{\group/H}\]
\vskip .3cm \noindent
where the last equality follows from the fact that taking invariants with respect to the finite 
group $\group/H$ is a finite inverse limit which commutes with the filtered colimit over $\alpha$.
Next, since $P$ is finitely presented as an object in  $\PSh_*$,
the last term identifies with $(Hom(P, {\underset {\alpha} \colim} Q_{\alpha}^H) )^{\group/H}$.
By Proposition ~\ref{key.props}(v), this then identifies with 
$(Hom(P, ({\underset {\alpha} \colim} Q_{\alpha})^H ))^{\group/H}$. This clearly identifies with
$Hom_{\group}(P, {\underset {\alpha} \colim} Q_{\alpha})$. 
\end{proof}
Next we define the following structure of a cofibrantly generated simplicial model
category on $\PSh^{\group}_*$,  $\PSh^{ \group, c}_*$, $\PSh/S^{\group}$ and on $\PSh/S^{\group, c}$
starting with the projective
model or object-wise model structure on $\PSh_*$ and on $\PSh/S$. Let $\oI$ ($\oJ$) denote the generating cofibrations (generating trivial cofibrations) in
$\PSh_*$ or $\PSh/S$: recall these are given as in ~\eqref{IJ.1}  in the projective model structures. Here we use the convention that the subscript $+$ and
$ \wedge$ denote the usual ones when considering the categories $\PSh^{\group}_*$,  $\PSh^{ \group, c}_*$ while they denote $+_{S}$ and $\wedge _S$
when considering the categories $\PSh/S^{\group}$ and $\PSh/S^{\group,c}$.
\vskip .3cm 
For technical reasons, (as will become apparent, for example, in the proof of Theorem ~\ref{eq.model.struct.1}), when using the projective model
structures on  $\PSh_*$ \mbox{ or } $\PSh/S$,  we will assume that the group 
\be \begin{equation}
     \label{tech.restr.1}
 \group \mbox{ is either finite, profinite or is a constant group-object (over } S \mbox{ ) associated to
a discrete group}. 
 \end{equation} \ee
\vskip .3cm
\begin{definition} (The model structure)
\label{equiv.model.struct}
(i) The {\it generating cofibrations} are of the form 
\[\oI_{\group}=\{(\group/H )_+ \wedge i \mid   i \eps I, \quad H \subseteq \group, \quad H \eps \W \}, \]
\vskip .3cm 
(ii) the {\it generating trivial cofibrations} are of the form 
\[\oJ_{\group} = \{(\group/H ) _+ \wedge  j \mid  j \eps J, \quad H \subseteq \group, \quad H \eps \W \} \, \mbox{ and }\]
\vskip .3cm 
(iii) and the  {\it weak-equivalences} ({\it fibrations}) are maps  $f:P' \ra P$ in
$\PSh^{\group}$ so that $f^H:{P'}^H \ra P^H$ is a weak-equivalence
(fibration, \res) in $\PSh$ for all $H \eps \W$.
\end{definition}
\begin{theorem}  
\label{eq.model.struct.1}
The above structure defines a cofibrantly generated
simplicial model structure on 
\newline \noindent
$\PSh^{\group}_*$, $\PSh^{\group,c}_*$, $ \PSh/S^{\group}$ and on $\PSh/S^{\group,c}$ that is proper. 
\vskip .3cm
In addition, the 
smash product of pointed simplicial presheaves (defined as in ~\eqref{smash.0} and ~\eqref{smash.over.S}) 
make these symmetric monoidal categories satisfy the
pushout-product axiom in both the object-wise and projective model structures. The unit for the smash product
in the object-wise model structure for $\PSh^{\group}_*$ is cofibrant, while the 
unit for the smash-product in $\PSh/S^{\group}$ is cofibrant in both the object-wise and the projective model structures.
\vskip .3cm
When $\group$ denotes a finite or profinite group, the categories $\PSh^{\group,c}_*$ and $\PSh/S^{\group,c}$ 
are locally presentable, i.e. there exists a set of
objects, so that every simplicial presheaf $P \eps \PSh^{\group, c}_*$ is a filtered colimit
of these objects. In particular, they are combinatorial and tractable model categories.
\end{theorem}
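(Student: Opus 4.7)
The plan is to establish the cofibrantly generated model structure by invoking Kan's recognition criterion (as in \cite[Theorem 11.3.2]{Hirsch}), exploiting the adjunction noted in \ref{GmodH}: for each $H \in \W$, the fixed-point functor $(-)^H$ is right adjoint to the functor $P \mapsto (\group/H)_+ \wedge P$ (respectively $P \mapsto (\group/H)_{+_{S}} \wedge_S P$). This adjunction forces the identification: $f$ has the right lifting property against $\oI_{\group}$ (respectively $\oJ_{\group}$) if and only if $f^H$ is a trivial fibration (respectively a fibration) in $\PSh_*$ or $\PSh/S$ for every $H \in \W$, which matches the proposed class of (trivial) fibrations. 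This also reduces the recognition criterion to the already-established model structures on $\PSh_*$ and $\PSh/S$ from Proposition \ref{passge.to.the.rel.case}.

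The conditions to verify are: (a) existence of all small limits and colimits, (b) smallness of domains of $\oI_{\group}$ and $\oJ_{\group}$ permitting the small object argument, and (c) every relative $\oJ_{\group}$-cell complex is a weak-equivalence. For (a), the cases $\PSh^{\group}_*$ and $\PSh/S^{\group}$ are immediate, while for $\PSh^{\group,c}_*$ and $\PSh/S^{\group,c}$ we use Proposition \ref{key.props}(iii)--(iv) and (vi). For (b), in the continuous case with $\group$ finite or profinite we invoke Proposition \ref{finite.pres.1}; in the general case smallness is inherited by adjunction from smallness in $\PSh_*$. For (c), note that applying $(-)^K$ to $(\group/H)_+ \wedge j$ (with $j \in J$) yields a coproduct of copies of $j$ indexed by the $K$-fixed points of $\group/H$ (or an empty map), which is a trivial cofibration in $\PSh_*$; transfinite composition and pushout closure of weak-equivalences then suffices. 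Left properness passes from $\PSh_*$ / $\PSh/S$ since the fixed-point functor commutes with pushouts along cofibrations built from $\oI_{\group}$ (which are free on orbits); right properness follows because $(-)^H$ preserves pullbacks and both weak-equivalences and fibrations are detected after applying $(-)^H$.

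For the pushout-product axiom, the reduction $((\group/H)_+\wedge i)\,\square\,((\group/K)_+\wedge j) \cong ((\group/H)_+\wedge(\group/K)_+)\wedge(i\,\square\,j)$ reduces everything to decomposing $(\group/H)\times(\group/K)$ into a disjoint union of $\group$-orbits of the form $\group/L$ where $L$ is an intersection of $H$ with a conjugate of $K$; property (iii) of $\W$ (upward closure) together with the existence of cores (property (ii)) guarantees each such $L$ is again in $\W$, so $(\group/H)_+\wedge(\group/K)_+$ is a coproduct of generators and the axiom reduces to that for $\PSh_*$ (respectively $\PSh/S$). The unit $S^0 = (\group/\group)_+$ (or $S_{+_S}$ in the relative setting) is cofibrant since $\group \in \W$ by (iii). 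For the final statement, when $\group$ is finite or profinite, Proposition \ref{finite.pres.1} shows that the objects $(\group/H)_+ \wedge \Delta[n]_+ \wedge X_+$ with $H \lhd \group$ of finite index and $X \in \bS$ form a set of finitely presented generators; combining this with continuity of the action and Proposition \ref{key.props}(v) exhibits any continuous $\group$-presheaf as a filtered colimit of such objects, giving local presentability, and tractability follows from the cofibrancy of domains and codomains in $\oI_{\group}$, $\oJ_{\group}$.

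The main obstacle is the pushout-product axiom: specifically, verifying that each orbit stabilizer appearing in the double-coset decomposition of $(\group/H)\times(\group/K)$ belongs to $\W$. In the finite and profinite cases this is automatic, but for a presheaf of groups or a smooth group scheme this imposes a genuine closure requirement on $\W$, which I expect must be folded into the defining axioms of $\W$ (or handled by passing to cores and using property (iii) to enlarge the relevant subgroups back into $\W$). A secondary technical point is confirming that the smallness required for the small object argument in the continuous setting really does hold for infinite profinite $\group$, where the sources of $\oI_\group$ involve infinite coproducts indexed by $\group/H$; here Proposition \ref{finite.pres.1} is indispensable.
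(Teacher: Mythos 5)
Your overall scaffold---transfer via the adjunction $(\group/H)_+\wedge(-) \dashv (-)^H$, verify a recognition criterion, reduce smallness and the cell--weak-equivalence condition to the underlying category, deduce properness from preservation of pushouts and pullbacks by the fixed-point functors---matches the paper's plan (which invokes \cite[Theorem~2.1.19]{Hov-1} rather than the Hirschhorn formulation, but these are equivalent). Your detailed checks for smallness in the profinite case via Proposition~\ref{finite.pres.1}, and for local presentability/tractability, are also essentially the paper's arguments.

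The substantive gap is in your treatment of the pushout-product axiom for the \emph{projective} model structure, and you have partly misdiagnosed it. You reduce to showing each orbit stabilizer $L=H\cap gKg^{-1}$ lies in $\W$, and your argument for that (cores via (ii), then (i) to find a common subgroup, then upward closure (iii)) is in fact fine. But the real obstruction is one step earlier: for a presheaf of groups or a smooth group scheme, the diagonal $\group$-object $\group/H\times\group/K$ \emph{need not decompose as a coproduct of homogeneous spaces $\group/L_\alpha$ at all}, so $(\group/H)_+\wedge(\group/K)_+\cong\vee_\alpha(\group/K_\alpha)_+$ simply fails. This is not a closure condition on $\W$ that can be folded into its axioms; it is a structural hypothesis on $\group$. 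The paper handles it by imposing the restriction~\eqref{tech.restr.1} (that $\group$ be finite, profinite, or a constant discrete group-object) whenever the \emph{projective} model structure on $\PSh_*$ or $\PSh/S$ is used, and then giving a separate, easier argument for the pushout-product axiom in the \emph{object-wise} model structure, which applies to any $\group$ because every object is cofibrant and one only needs the identification $((\group/K)_+\wedge X\wedge(\group/H)_+\wedge Y)^L\cong((\group/K)_+\wedge(\group/H)_+)^L\wedge X\wedge Y$ together with~\eqref{pushouts.and.colimits.fixed.points}. Your proposal never distinguishes these two cases, even though the theorem statement asserts the axiom ``in both the object-wise and projective model structures,'' and a single orbit-decomposition argument cannot cover both. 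Relatedly, your claim that applying $(-)^K$ to $(\group/H)_+\wedge j$ yields a \emph{coproduct of copies of $j$} is true only in the finite/profinite setting; the paper's own remark after the theorem emphasizes that this fails for group schemes and that one must instead argue from cofibrancy of $(\group/H)_+^K$ and the monoidal axiom in $\PSh_*$.

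A minor point: your justification for cofibrancy of the unit (``since $\group\in\W$ by (iii)'') is not the right reason. The paper's claim is that $S^0$ is cofibrant in the object-wise structure on $\PSh_*^\group$ simply because all objects are cofibrant there, and that the unit $S_{+_S}$ of $\PSh/S^\group$ is cofibrant in both structures because it is represented by an object of $\bS/S$; membership of $\group$ in $\W$ plays no role in this.
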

\begin{proof} We will not explicitly discuss the case $\PSh^{ \group, c}_*$ or $\PSh/S^{ \group, c}$ since the arguments are the same
as for $\PSh^{ \group}_*$ and $\PSh/S^{ \group}$. The key observation is the following: Let $H \subseteq \group$ denote a
subgroup in $\W$. Then recall that  the functor $P \mapsto P^H,  \, \PSh^{\group}_* \ra \PSh_* \, (\PSh/S^{\group} \ra \PSh/S) $ has as left-adjoint, the functor
\be \begin{equation}
     \label{basic.adj}
Q \mapsto (\group/H)_+ \wedge  Q.
\end{equation} \ee
We now proceed to verify that the hypotheses of 
\cite[Theorem 2.1.19]{Hov-1} are satisfied. It is obvious that the subcategory of
weak-equivalences is closed under composition and retracts and has the two-out-of-three
property.  Next we proceed to verify that the domains of $\oI_{\group}$ are small relative to
$\oI_{\group}$-cell. First let $\{ P _{\alpha}| \alpha \}$ denote a small sub-collection of objects in $\PSh^{\group}_*$ ($\PSh/S^{\group}$)
that are sub-objects of  a  $P \eps \PSh^{\group}_*$ ($P \eps \PSh/S^{\group}$, \res). By Proposition ~\ref{key.props}(ii), one obtains the 
identification 
\[ (\colimalpha P_{\alpha})^H \cong \colimalpha (P_{\alpha})^H. \]
\vskip .3cm
We consider this first in the projective model structure.
Let $ (\group/H )_+ \wedge  (\delta \Delta [n] \rtimes h_X) \ra (\group/H )_+ \wedge  ( \Delta [n] \rtimes h_X)$
denote a generating cofibration in $\oI_{\group}$. Suppose one is given a map $f:  (\group/H )_+ \wedge (\delta \Delta [n] \rtimes h_X) \ra 
\colimalpha P_{\alpha}$.
By the adjunction in ~\eqref{basic.adj}, this map corresponds to a map $f^H:(\delta \Delta [n] \rtimes h_X) \ra (\colimalpha P_{\alpha})^H
= \colimalpha (P_{\alpha})^H$.
Since $(\delta \Delta [n] \rtimes h_X)$ is small in $\PSh_*$ (and in $\PSh/S$), the map
$(\delta \Delta [n] \rtimes h_X) \ra \colimalpha (P_{\alpha}^H)$ factors through some $P_{\alpha_0}^H$. Therefore, its adjoint
$f:(\group/H )_+ \wedge (\delta \Delta [n] \rtimes h_X) \ra \colimalpha P_{\alpha}$ factors through $P_{\alpha_0}$.
This proves the domains
of $\oI_{\group}$ are small relative to $\oI_{\group}$-cell. An entirely similar argument proves that the domains of $\oJ_{\group}$ are small 
relative to $\oJ_{\group}$-cell.
Observe that these two steps make use of the fact that the model structure on $\PSh_*$ and on $\PSh/S$ are indeed the projective
model structures.
\vskip .3cm
Next we consider the object-wise model structure.  Let 
$id \times i: (\group/H )_+ \wedge  A \ra (\group/H )_+ \wedge B$ denote a generating cofibration in $\oI_{\group}$, i.e.
the map $i:A \ra B$ is in $\oI$. Here we make use of the observation that the fixed point functor $Q \ra Q^H$ 
 has the following two properties for simplicial presheaves (see \cite[4.2, 4.3]{Guill}):
\be \begin{multline}
\begin{split}
     \label{pushouts.and.colimits.fixed.points}
\mbox{ (i) The fixed point functor }  Q \ra Q^K \mbox{ preserves pushouts along any map of the form }\\
id \times i: (\group/H)_+ \wedge A \ra (\group/H)_+ \wedge  B, \mbox{ with } i \eps \oI \mbox{ and }
\end{split}
\end{multline} \ee
\be \begin{multline}
     \begin{split}
\mbox{ (ii) if } \{ P _{\alpha}| \alpha \} \mbox{ denotes a small sub-collection of objects in } \PSh_*^{\group} (\PSh/S^{\group}) 
\mbox{ that are sub-objects of  a  } \notag \\  
P \eps \PSh_*^{\group} (\PSh/S^{\group}), \mbox{ one obtains the identification }
(\colimalpha P_{\alpha})^H \cong \colimalpha (P_{\alpha})^H. \notag
\end{split} \end{multline} \ee
\vskip .3cm \noindent
One may verify the first property as follows. First observe that  taking pushout of a diagram  in $\PSh/S^{\group}$ is the same as taking
pushout of the corresponding diagram in $\PSh^{\group}$. Secondly pushouts of pointed simplicial presheaves may be computed section-wise, i.e.
$(A \sqcup_B C)(U) = A(U) \sqcup_{B(U)} C(U)$, when $A, B, C \eps \PSh_*$ and $ U \eps \bS_S$. Therefore, for any subgroup $K \eps \W$,
\[ ((\group/H)_+ \wedge B \sqcup _{(\group/H)_+ \wedge A} P )^K(U) = (((\group/H)_+(U)^{K(U)} \wedge B(U)) \sqcup _{(\group/H)_+(U)^{K(U)} \wedge A(U)}  P(U)^{K(U)}\]
where $P \eps \PSh^{\group}_*$ or $P \eps \PSh/S^{\group}$ and $ K \eps \W$. The last equality shows that one obtains:
\[((\group/H)_+ \wedge B \sqcup _{(\group/H)_+ \wedge A} P)^K = (\group/H)_+^K \wedge B \sqcup_{(\group/H)_+^K \wedge A} P^K.\]
\vskip .3cm \noindent
Moreover observe that in the object-wise model structure, every object in $\PSh_*$ ($\PSh/S$) is cofibrant: therefore, $(\group/H)_+^K$ is cofibrant in 
$\PSh_*$ ($\PSh/S$, \res). While using the projective model structure on $\PSh_*$, $(\group/H)_+^K$ is still cofibrant in view of our restrictive hypothesis
~\eqref{tech.restr.1}. 
Now the monoidal axiom in $\PSh_*$ ($\PSh/S$) shows that 
 $(\group/H)_+^K \wedge A {\overset {(\group/H)_+^K \wedge i} \ra} (\group/H)_+^K \wedge B$ is a cofibration in $I$ (trivial cofibration)
if $i \eps \oI$ ($i \eps \oJ$, \res). 
\vskip .3cm
 Next suppose 
\vskip .3cm
\be \begin{equation}
      \label{fixed.pts.pres.cofs.0}
\xymatrix{{\vee _{\alpha} (\group/K_{\alpha})_+ \wedge A_{\alpha} } \ar@<1ex>[r] \ar@<1ex>[d]^{\vee_{is \wedge i_{\alpha}}} & {Q_{\alpha}} \ar@<1ex>[d]\\
{\vee _{\alpha} (\group/K_{\alpha})_+ \wedge B_{\alpha} } \ar@<1ex>[r] & {Q_{\alpha+1}} }
    \end{equation} \ee
is a pushout-square with each $i_{\alpha}: (\group/K_{\alpha})_+ \wedge A_{\alpha} \ra (\group/K_{\alpha})_+ \wedge B_{\alpha}$ belonging to
$\oI_{\group}$ ($\oJ_{\group}$, \res). Then for each $H \eps \W$, the above observations show that one obtains a pushout-square on taking the $H$-fixed points
and that therefore, the induced map $Q_{\alpha}^H \ra Q_{\alpha+1}^H$ is a cofibration (trivial cofibration, \res).
\vskip .3cm
Now suppose $(\group/H)_+ \wedge A$ is the domain of a map in $\oI$ ($\oJ$, \res) and one is provided with a map $(\group/H)_+ \wedge A \ra \colim_{\alpha} P_{\alpha}$,
which is an $\oI_{\group}$-cell (or an $\oJ_{\group}$-cell).  By adjunction, this corresponds to a map $A \ra (colim_{\alpha} P_{\alpha})^H = \colim_{\alpha}(P_{\alpha})^H$
with the last equality holding in view of ~\eqref{pushouts.and.colimits.fixed.points}. The above observations now show that each of the maps
$P_{\alpha}^H \ra P_{\alpha+1}^H$ is a cofibration (trivial cofibration, \res) in $\PSh_*$ or $\PSh/S$.  
At this point, \cite[Proposition 2.1.16]{Hov-1} shows that  the domains 
of maps in $\oI$
 ($\oJ$) are small relative to $\oI$-cofibrations ($\oJ$-cofibrations (i.e. $\oI$-trivial cofibrations), \res), so that the above map factors through
some $P_{\alpha_0}^H$. Taking adjoints, it follows the original map $(\group/H)_+ \wedge A \ra \colim_{\alpha} P_{\alpha}$ factors through
$P_{\alpha_0}$.
 This proves  that the domains of the maps in $\oI_{\group}$ ($\oJ_{\group}$) are
also small relative $\oI_{\group}-cell$ ($\oJ_{\group}-cell$, \res.)
\vskip .3cm
The first property in ~\eqref{pushouts.and.colimits.fixed.points} and the above arguments also show that if $f$ is obtained by co-base change from a map 
$j \eps \oJ_{\group}$, then $f^K$ is a $\oJ$-cofibration (i.e. a trivial cofibration) in $\PSh_*$ ($\PSh/S$) for any $K \eps \W$.
Therefore, any transfinite composition of maps of the form $f$ is a weak-equivalence, i.e. each map in $\oJ_{\group}$-cell is a weak-equivalence.
One may now observe using the adjunction  that the fibrations defined above identify with $\oJ_{\group}-inj$ and that the trivial 
fibrations (i.e.
the fibrations that are also weak-equivalences) identify with $\oI_{\group}-inj$. Recall from \cite{Hov-1} that $\oI_{\group}-cof$ (i.e. the $\oI_{\group}$-cofibrations)
are the maps $(\oI_{\group}-inj)-proj$, i.e. those maps that have the left lifting property with respect to every trivial fibration.  
Next we proceed to show that any map in $\oJ_{\group}$-cell is in $\oI_{\group}-cof$.
Therefore, suppose we are given a commutative square in $\PSh_*^{\group}$ or in $\PSh/S^{\group}$:
\[\xymatrix{{(\group /H)_+ \wedge A} \ar@<1ex>[r] \ar@<1ex>[d] ^{id \times j} & {X} \ar@<1ex>[d]^p\\
            {(\group/H)_+ \wedge B} \ar@<1ex>[r] & Y}
\]
with $j \eps \oJ$ and $p \eps \oI_{\group}-inj$. Then, by adjunction this corresponds to the commutative square:
\[\xymatrix{{ A} \ar@<1ex>[r] \ar@<1ex>[d] ^{ j} & {X^H} \ar@<1ex>[d]^{p^H}\\
            { B} \ar@<1ex>[r] & Y^H}
\]
in $\PSh_*$ or $\PSh/S$. Now $p^H$ is a fibration and $j$ is a generating trivial cofibration, so that one obtains a 
lifting: $B \ra X^H$ making the two triangles commute. By adjunction this lift corresponds to a lift $(\group/H )_+ \wedge B \ra X$ in the
first diagram. This proves any map in $\oJ_{\group}$ is in $\oI_{\group}-cof$. One may readily see that $\oI_{\group}-cof$ is closed under co-base change,
pointed unions and transfinite compositions so that any map in $\oJ_{\group}-cell$  also belongs to $\oI_{\group}-cof$.
\vskip .3cm
Since $\oI_{\group}-inj$ corresponds to trivial fibrations, every map in $\oI_{\group}-inj$ is a weak-equivalence and  it is in 
 $\oJ_{\group}-inj$ (which  denote the  fibrations). Since $\oJ_{\group}-inj$ denotes fibrations, it is clear that any map that is in
$\oJ_{\group}-inj$ and is also a weak-equivalence is also in $\oI_{\group}-inj$ (which denotes trivial fibrations.)
Therefore, we have verified all the hypotheses in \cite[Theorem 2.1.19]{Hov-1} and therefore the  statement that the structures
in Definition ~\ref{equiv.model.struct} define a cofibrantly generated model category in the theorem is proved.
\vskip .3cm
The left-properness may be established using the property that the fixed point functors preserve pushout along the generating cofibrations: see
~\eqref{pushouts.and.colimits.fixed.points}(i) considered above. (One may also want to observe that weak-equivalences are closed under transfinite 
compositions, which reduces to the corresponding property for simplicial sets and pointed simplicial sets.)
The right properness is clear since
the fixed point functor preserves pull-backs. The smash products are the ones defined in ~\eqref{smash.0} and ~\eqref{smash.over.S}.
\vskip .3cm
Next we prove that the model structures in Definition ~\ref{equiv.model.struct} define a symmetric monoidal model category structure on $\PSh_*^{\group}$
and $\PSh/S^{\group}$ with respect to the monoidal structures considered in ~\eqref{smash.0} and ~\eqref{smash.over.S}. Observe from 
\cite[Corollary 4.2.5]{Hov-2} that in order to prove the pushout-product axiom holds in general, it suffices to prove that the pushout product of two generating cofibrations is a cofibration and
that this pushout-product is also a weak-equivalence when one of the arguments is a generating trivial cofibration.
\vskip .3cm
We will first consider this in the projective model structure.
Therefore, let $(\group/H)_+ \wedge i: (\group/H)_+ \wedge A \ra (\group/H)_+ \wedge B$ and let $(\group/K)_+ \wedge j: (\group/K)_+ \wedge X \ra
(\group/K)_+ \wedge Y$ denote two generating cofibrations in $\PSh_*^{\group}$ or $\PSh/S^{\group}$. Then
a key observation is that $(\group/H)_+ \wedge (\group/K)_+ \cong \vee_{\alpha} (\group/K_{\alpha})_+$ where the $\vee$ is over the 
orbits of $\group$ for the diagonal action of $\group$ on $\group/H \times \group/K$, each orbit being of the form $\group/K_{\alpha}$: this is
possible only because of the restrictive hypotheses on the group $\group$ as in ~\eqref{tech.restr.1}.
(We skip the verification that $K_{\alpha} \eps \W$, which needs to be verified separately in each of the  cases we consider.)
Therefore, it suffices to prove that for $E$
a fibrant object in $\PSh_*^{\group}$ ($\PSh/S^{\group}$), the induced map
\be \begin{multline}
     \begin{split}
  \Hom((\group/( K_{\alpha}))_+ \wedge B \wedge Y, E) \ra\\
    \Hom((\group/( K_{\alpha}))_+ \wedge A \wedge Y, E) {\underset {\Hom((\group/( K_{\alpha}))_+ \wedge A \wedge X, E)} \times} \Hom((\group/(K_{\alpha}))_+ \wedge B \wedge X, E)
     \end{split}
\end{multline} \ee
is a fibration in $\PSh_*^{\group}$($\PSh/S^{\group}$, \res) which is a weak-equivalence if $i$ or $j$ is also weak-equivalence and where $\Hom$
denotes the appropriate internal hom.
The above map now identifies with
\[ \Hom( B \wedge Y, E^{ K_{\alpha}}) \ra \Hom( A \wedge Y, E^{ K_{\alpha}}) {\underset {\Hom( A \wedge X, E^{ K_{\alpha}})} \times} \Hom( B \wedge X, E^{ K_{\alpha}}) \]
where $\Hom$ now denotes the internal hom in $\PSh_*$ ($\PSh/S$, \res). Therefore, the fact that the above map is a fibration 
and that it is a trivial fibration if $i$ or $j$ is also a weak-equivalence follows from the fact that the pushout-product axiom
holds in $\PSh_*$ ($\PSh/S$).
\vskip .3cm
In the object-wise model structure the corresponding proof is much easier and holds more generally for any $\group$ satisfying our basic hypotheses in view
of the following observations: (i) one may identify $(\group/K)_+ \wedge X \wedge (\group/H)_+\wedge Y$ with $((\group/K)_+ \wedge (\group/H)_+) \wedge X \wedge Y$
and therefore $((\group/K)_+ \wedge X \wedge (\group/H)_+\wedge Y)^L \cong ((\group/K)_+ \wedge (\group/H)_+)^L \wedge X \wedge Y$ for any subgroup $L \eps \W$.
(ii) The observation in ~\eqref{pushouts.and.colimits.fixed.points} and (iii) in the object-wise model structure, every object is cofibrant and every 
monomorphism is a cofibration. In view of these, one reduces the pushout-product property in $\PSh_*^{\group}$ ($\PSh/S^{\group}$) to the corresponding
property in $\PSh_*$ ($\PSh/S$, \res).
\vskip .3cm
One may observe that the unit of the smash-product (see ~\eqref{smash.0}) in $\PSh_*^{\group}$ is just the usual $0$-sphere $S^0$ which is 
cofibrant in  the
object-wise  model structure. The unit for the smash-product (see ~\eqref{smash.over.S}) in $\PSh/S^{\group}$ is $S_+$ and this is
cofibrant in both the object-wise and projective model structures on $\PSh/S^{\group}$. 
\vskip .3cm
Next we proceed to observe that the above categories are combinatorial and tractable when $\group$ denotes either a finite or pro-finite group. 
First observe that for any $\group$, the objects of the form $(\group/H)_+ \wedge (\Delta [n] \rtimes h_U)$ as $U \eps \bS_S$, $H \eps \W$ and $n \ge 0$
 vary form a set of generators for $\PSh_*^{\group, c}$ and $\PSh/S^{\group, c}$. Now Proposition ~\ref{finite.pres.1} shows that these objects
are finitely presented as objects in $\PSh_*^{\group}$ and $\PSh/S^{\group}$ when $\group$ is a profinite group. Moreover, since
we are considering simplicial presheaves, it is easy to see that the above generators form a family of strong generators in the sense
of \cite[Definition 4.5.3]{Bor1}.
Therefore, \cite[Lemma 5.2.5]{Bor2} proves that every object in the above categories is a filtered colimits of objects $\{G_{\alpha}|\alpha\}$ 
 obtained as finite colimits of the above generators and that the objects $G_{\alpha}$ themselves are finitely presentable. 
Therefore, when $\group$ is finite or profinite, the above categories are locally presentable. (One may observe that the same conclusions also
 hold for the diagram categories $\PSh_*^{{\mathcal O}^o_{\group}}$ and $\PSh/S^{{\mathcal O}^o_{\group}}$ in general, i.e. without the restriction that 
the group $\group$ be finite or profinite: see below.)
Since these are already shown to be cofibrantly generated model categories in both the projective and object-wise model structures, it follows 
they are in fact combinatorial model categories.
\vskip .3cm
 In the projective model structure, it is clear from the choices of the sets $\oI_{\group}$ and $\oJ_{\group}$ that
every object $(\group/H)_+ \wedge  h_X$, $X \eps \bS/S$ is cofibrant. (These follow from the fact that each object $h_X$ is cofibrant in
the projective model structure on $\PSh_*$ and on $\PSh/S$.) In the object-wise model structure, all monomorphisms are cofibrations, 
so that the
domains of the sets $\oI_{\group}$ and $\oJ_{\group}$ are cofibrant. Therefore, it follows that these model structures are also tractable.
\end{proof}
\begin{remark}
 In case $\group$ is finite or pro-finite, it is also true that $(\group/H)_+^K \wedge i$ is a disjoint finite union
of copies of $i$ and hence belongs to $\oI$-cell ($\oJ$-cell) if $i \eps \oI$ ($j \eps \oJ$, \res). But this fails in general when $\group$ denotes a 
group-scheme. In view of this, one cannot call the first property in ~\eqref{pushouts.and.colimits.fixed.points} the {\it cellularity} of the fixed point 
functors in general.
\end{remark}
\vskip .3cm
\subsection{\bf Alternate approach via diagram categories}
\label{diagram.cats}
The following is an alternative approach to providing a model structure on the
category of pointed simplicial presheaves with continuous action by $\group$. We will start with either the object-wise or the projective model
structures on $\PSh_*$ and $\PSh/S$. Next one considers  the 
orbit category
${\mathcal O}_{\group}= \{\group/H \mid H \eps \W \}$. A morphism $\group/H \ra
\group/K$ corresponds to $\gamma \eps \group$, so that $\gamma.H \gamma ^{-1}
\subseteq K$. One may next consider the categories $\PSh_*^{{\mathcal
O}_{\group}^o}$ of ${\mathcal O}_{\group}^o$ -diagrams with values in ${\rm
{PSh}}_*$ ($\PSh/S^{{\mathcal
O}_{\group}^o}$ of ${\mathcal O}_{\group}^o$ -diagrams with values in ${\rm
{PSh}}/S$, \res). This category, being a category of diagrams with values in
$\PSh_*$ ($\PSh/S$) readily inherits the structure of
 a cofibrantly generated model category by providing it with the projective model structure. 
(i.e. A map $\{f(\group/H): A(\group/H) \ra B(\group/H)| H \eps \W\}$ in $\PSh_*^{{\mathcal
O}_{\group}^o}$ ($\PSh/S^{{\mathcal
O}_{\group}^o}$) is a weak-equivalence (fibration) if each $f(\group/H)$ is a weak-equivalence (fibration, \res) in
$\PSh_*$ ($\PSh/S$, \res). ) It is observed in \cite[Proposition 11.6.3]{Hirsch} that the cofibrations in the above model
structure are also object-wise cofibrations. Therefore, it is readily shown that 
the above model structure on the diagram category  is left-proper (right-proper, cellular, simplicial)
since the model category $\PSh_*$ ($\PSh/S$) is. It is also combinatorial and tractable since since the model category $\PSh_*$ ($\PSh/S$) is: see the next paragraph.
\vskip .3cm
 Recall that the generating cofibrations of this diagram category are defined as follows.
\[\oI_{{\mathcal O}^{{\group}^o}} = \{(\group/H)_+ \wedge i \mid  i \eps I,  H \eps \W \}.\] 
 The corresponding generating trivial cofibrations are
\[\oJ_{{\mathcal O}^{{\group}^o} }= \{(\group/H)_+ \wedge j \mid  j \eps J,  H \eps \W \}.\] 
The diagram $(\group/H)_+ \wedge i$ is the
${{\mathcal O}_{\group}^o}$-diagram defined by 
\[((\group /H)_+ \wedge i)(\group/K) = Hom_{{\mathcal O}_{\group}^o}(\group/H, \group/K)_+ \wedge i =
Hom_{\group}(\group/K, \group/H)_+ \wedge i = (\group/H)^K_+ \wedge i.\] 
\vskip .3cm
In fact we can define the {\it constant functor} 
\be \begin{equation}
     \label{free.diagram}
{\mathfrak C}:\PSh_* \ra \PSh_*^{{\mathcal O}^{{\group}^o} } \,  ({\mathfrak C}:\PSh/S \ra \PSh/S^{{\mathcal O}^{{\group}^o} })
\end{equation} \ee
by 
\[{\mathfrak C}(A)(\group/K) = A, \mbox{ for all } K \eps \W. \] 
This functor will be used
later on.
\vskip .3cm
Then the two categories 
$\PSh_*^{\group,c}$ and ${\rm {PSh}}_*^{{\mathcal O}_{\group}^o}$
are related by the functors:
\[\Phi:\PSh_*^{\group, c} \ra {\rm {PSh}}_*^{{\mathcal O}_{\group}^o},
\quad F \mapsto \Phi(F) = \{\Phi(F)(\group/H)=F^H\} \mbox{ and} \]
\[\Theta: {\rm {PSh}}_*^{{\mathcal O}_{\group}^o} \ra \PSh_*^{\group,c}, \quad M \mapsto \Theta (M) =
{\underset { \{H \mid {H \eps \W} \}} \colim } M(\group/H).\]
The same functors are defined at the level of the categories $\PSh/S^{\group,c}$ and $\PSh/S^{{\mathcal O}_{\group}^o}$ also.
\vskip .3cm \noindent 
Recall that $\W$ is an inverse system of subgroups of $\group$. Therefore, given two subgroups $H, H' \eps \W$, there
is a subgroup $H'' \eps \W$ contained in $H \cap H'$. Now the core of $H''$, $H''_{\group}$ is contained in $H \cap H'$, belongs to $\W$ and
is a normal subgroup of $\group$.  In case the group $\group$ is profinite, $H''_{\group}$ will have finite index in $\group$.
 The obvious quotient map 
$\group/H''_{\group} \ra \group/H$ induces a map
 $M(\group/H) \ra M(\group/H''_{\group})$ so that $\{M(\group/K)|K \mbox{ normal in } \group \mbox{ and } \eps \W\}$
is cofinal in the direct system used in the above colimit. Now $\group/K$ acts on $\group/K$ by translation
and this induces an action by $\group/K$ on $M(\group/K)$. Therefore, the above colimit has a natural 
action by $\group$ which is clearly continuous.
The  natural transformation $\Theta \circ \Phi \ra id $ may be shown to be an 
 isomorphism readily. Moreover $\Theta$ is left-adjoint to $\Phi$. It follows,
therefore, that the functor $\Phi$ is full and faithful, so that $\Phi$ is an
imbedding of the
category $\PSh_*^{\group,c}$ in ${\rm {PSh}}_*^{{\mathcal
O}_{\group}^o}$. The same holds for the categories $\PSh/S^{\group,c}$ and ${\rm {PSh}}/S^{{\mathcal
O}_{\group}^o}$.
\begin{proposition} 
\label{Phi.Theta.props}
Assume the above situation. 
Then the following hold 
\vskip .3cm
(i) If $P \eps {\rm {PSh}}_*^{{\mathcal O}_{\group}^o}$ ($\PSh/S^{{\mathcal O}_{\group}^o}$) is cofibrant, the natural map $\eta: P \ra \Phi \Theta (P)$ 
is an isomorphism.
\vskip .3cm
(ii) The two functors $\Phi$ and $\Theta$ are
 Quillen-equivalences.
\vskip .3cm
(iii) The categories $\PSh_*^{{\mathcal O}_{\group}^o}$ and $\PSh/S^{{\mathcal O}_{\group}^o}$ are combinatorial and tractable model categories
for both the projective and object-wise model structures on $\PSh_*$.
\vskip .3cm
(iv) The categories  $\PSh_*^{{\mathcal O}_{\group}^o}$,  $\PSh/S^{{\mathcal O}_{\group}^o}$, $\PSh_*^{\group, c}$ and $\PSh/S^{\group, c}$
have sets of small homotopy generating sets that are cofibrant. i.e. Each of the above categories has a set of cofibrant objects $\{C_{\alpha}|\alpha\}$, so that 
every object in the above categories is a homotopy colimit of the $C_{\alpha}$. (See \cite[Definition 1.3]{Bar}.)
\vskip .3cm
(v) Assume $\group$ is profinite. Let $P \eps \PSh_*^{\group}$ ($\PSh/S^{\group}$) be such that
in $\PSh_*$ ($\PSh/S$, \res) it is finitely presented and $P =P^H$ for some normal 
subgroup $H$ of finite index in $\group$. Then $\Phi(P)$ is a finitely presented object in
${\rm {PSh}}_*^{{\mathcal O}_{\group}^o}$ ($\PSh/S^{{\mathcal O}_{\group}^o}$, \res).
\end{proposition}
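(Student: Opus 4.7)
The plan is to prove (i) first and deduce (ii)--(v) from it using the adjunction $\Theta \dashv \Phi$ together with standard combinatorial model-category bookkeeping. For (i), I would first verify $\eta$ on a single generating cofibration $P = (\group/H)_+ \wedge A$ viewed as an object of $\PSh_*^{{\mathcal O}_{\group}^o}$: the colimit $\Theta(P) = \underset{K \eps \W}{\mathrm{colim}}\,(\group/H)_+^K \wedge A$ stabilizes at $(\group/H)_+ \wedge A$ as soon as $K$ is contained in the core $H_{\group}$ (which belongs to $\W$ by property (ii) of $\W$), and then $\Phi\Theta(P)(\group/K) = (\group/H)_+^K \wedge A = P(\group/K)$ tautologically. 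Passing to an arbitrary cofibrant $P$: colimits in the diagram category are computed pointwise, and $\Theta$ preserves all colimits as a left adjoint; on the other side, the fixed-point functor $\Phi$ commutes with pushouts along the image maps $(\group/H)_+ \wedge i$ in $\PSh_*^{\group,c}$ by ~\eqref{pushouts.and.colimits.fixed.points}(i), and with the transfinite filtered colimits of cofibrations used to assemble cell complexes by Proposition ~\ref{key.props}(ii) (applied to the monomorphic transition maps, together with its relative analog). A transfinite induction then promotes $\eta$ from an isomorphism on generators to an isomorphism on every relative $\oI_{{\mathcal O}_{\group}^o}$-cell complex, and retracts extend the conclusion to all cofibrant $P$.

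For (ii), the model structure on $\PSh_*^{\group, c}$ (respectively $\PSh/S^{\group, c}$) is defined so that $f$ is a fibration or a weak equivalence iff each $f^H = \Phi(f)(\group/H)$ is one in $\PSh_*$ (respectively $\PSh/S$); hence $\Phi$ preserves and reflects both classes, and $(\Theta, \Phi)$ is a Quillen adjunction. The counit $\Theta\Phi \Ra \mathrm{id}$ is a natural isomorphism by the paragraph preceding the proposition, and by (i) the unit is an isomorphism on every cofibrant object. Since $\Phi$ additionally preserves all weak equivalences (these being defined pointwise in the diagram category), these facts together give the standard criterion for a Quillen equivalence.

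Parts (iii) and (iv) are then formal. The diagram categories $\PSh_*^{{\mathcal O}_{\group}^o}$ and $\PSh/S^{{\mathcal O}_{\group}^o}$ carry the projective (respectively object-wise) diagram model structure over the combinatorial and tractable model categories $\PSh_*$ and $\PSh/S$ established earlier in the paper, and the standard transfer result for diagram categories over combinatorial, tractable model categories (see \cite{Bar}) gives (iii); tractability uses cofibrancy of the domains of the generators $\oI$. For (iv), a natural candidate homotopy generating set in the diagram categories is $\{(\group/H)_+ \wedge (\Delta[n] \rtimes h_X) \mid H \eps \W,\, n \ge 0,\, X \eps \bS/S\}$ (with $\wedge$ replaced by $\wedge^S$ in the relative variants); these are cofibrant by the computation in the proof of Theorem ~\ref{eq.model.struct.1}, and every object of the diagram category is naturally the (homotopy) colimit of its canonical diagram of such cells after projective cofibrant replacement. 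The corresponding homotopy generating sets for $\PSh_*^{\group,c}$ and $\PSh/S^{\group,c}$ are then obtained by applying the left Quillen equivalence $\Theta$, which preserves homotopy colimits of cofibrant objects.

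For (v), since $H$ is normal of finite index the action on $P$ factors through the finite group $\group/H$, and $P$ is finitely presented as a $(\group/H)$-equivariant object of $\PSh_*$; thus $P$ may be written as a finite colimit of cells $(\group/K)_+ \wedge A$ with $H \subseteq K$ (so $K \eps \W$ by property (iii) of $\W$) and $A$ finitely presented in $\PSh_*$. Let $\tilde P$ denote the same finite colimit formed in $\PSh_*^{{\mathcal O}_{\group}^o}$; then $\tilde P$ is cofibrant with $\Theta(\tilde P) = P$, so by (i) $\Phi(P) \cong \tilde P$. Each cell $(\group/K)_+ \wedge A$ is finitely presented in the diagram category because its mapping-out functor is $\Hom_{\PSh_*}(A, Q(\group/K))$ by the Yoneda-type adjunction, and both pointwise evaluation $Q \mapsto Q(\group/K)$ and $\Hom_{\PSh_*}(A, -)$ commute with filtered colimits; since a finite colimit of finitely presented objects is finitely presented, $\Phi(P)$ is finitely presented. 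The main obstacle throughout is (i), and specifically the careful interchange of the fixed-point functor $\Phi$ with the pushouts and transfinite filtered colimits underlying the cell-complex structure on cofibrant diagram-category objects; once (i) is secured, the remaining parts follow essentially formally.
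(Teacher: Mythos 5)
Your treatment of parts (i)--(iv) follows essentially the same route as the paper. For (i) you verify $\eta$ on the generating cells $(\group/H)_+ \wedge i$ (observing that $\Theta$ stabilizes once the indexing subgroup enters the core $H_\group$), and then promote the isomorphism through pushouts along generating cofibrations via ~\eqref{pushouts.and.colimits.fixed.points}(i), through filtered colimits of subobjects via Proposition ~\ref{key.props}(ii), and through retracts. For (ii) you invoke the standard Quillen-equivalence criterion once you know $\Theta\Phi \cong \mathrm{id}$, the unit is an isomorphism on cofibrants, and $\Phi$ reflects weak equivalences. Parts (iii) and (iv) are treated by the same combinatorial model-category bookkeeping that the paper uses, including the identification of the diagram category with presheaves on a product site and the transport of homotopy generators through the left Quillen equivalence $\Theta$.

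Part (v), however, departs from the paper and contains two genuine gaps. First, the claim that a finitely presented object $P$ of $\PSh_*$ with $P = P^H$ ``may be written as a finite colimit of cells $(\group/K)_+ \wedge A$'' is not granted: in a locally finitely presentable category generated by a set of finitely presented generators, a finitely presented object is only a \emph{retract} of a finite colimit of generators, not itself one, so the decomposition you postulate does not exist in general. Second, and more seriously, the object $\tilde P$ you form in $\PSh_*^{{\mathcal O}_{\group}^o}$ as an arbitrary finite colimit of generating cells need not be cofibrant; cofibrancy requires a \emph{relative cell} presentation (a transfinite composition of pushouts along generating cofibrations), and a bare finite colimit --- a coequalizer, say, or a pushout along a non-cofibration --- generally fails to be one. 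Without cofibrancy of $\tilde P$, the appeal to part (i) to conclude $\Phi(P) = \Phi\Theta(\tilde P) \cong \tilde P$ is unjustified. The paper instead handles (v) by a direct $\Hom$-set computation mirroring Proposition ~\ref{finite.pres.1}: one first observes that $P = P^{H'}$ for every $H' \subseteq H$, in particular for the normal core $H_{\group}$ (which again has finite index), reduces the external $\Hom$ in the diagram category to a $\group/H_{\group}$-invariant of a $\Hom$ in $\PSh_*$ computed through evaluation at $\group/H_{\group}$, and then commutes the filtered colimit past the finite limit (the $\group/H_{\group}$-invariants) and past $\Hom_{\PSh_*}(P, -)$ (since $P$ is finitely presented), exactly as in Proposition~\ref{finite.pres.1}. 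That argument avoids any cofibrancy issue and is what you should substitute for your cell-decomposition route.
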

\begin{proof} We will explicitly consider only the case of $\PSh_*^{\group,c}$ and $\PSh_*^{{\mathcal O}_{\group}^o}$, since the
case of $\PSh/S^{\group,c}$ and $\PSh/S^{{\mathcal O}_{\group}^o}$ is similar.  
A key observation is that the the functor $\Theta$ sends the generating
cofibrations, i.e. diagrams of the form $(\group/H)_+ \wedge i$ to $(\group/H)_+ \wedge i \eps \oI_{\group}$
and similarly sends the generating trivial cofibrations, i.e. diagrams of the form $(\group/H)_+  \wedge j$
 to $(\group/H)_+ \wedge   j \eps \oJ_{\group}$. $\Phi$, on the other hand, sends the generating cofibrations
in $\oI_{\group}$ of the form $(\group/H)_+ \wedge i$ (generating trivial cofibrations in $\oJ_{\group}$ of the form $(\group/H)_+ \wedge j$) 
to the corresponding generating cofibration $(\group/H)_+ \wedge i$ in 
$\oI_{{\mathcal O}^{{\group}^o} }$ (generating trivial cofibration $(\group/H)_+ \wedge j$ in $\oJ_{{\mathcal O}^{{\group}^o} }$, \res). Since any cofibrant 
object
 in the model category $\PSh_*^{{\mathcal O}_{\group}^o}$ is a retract of an 
$\oI_{{\mathcal O}_{\group}^o}$-cell, and both $\Theta$ and $\Phi$ preserve retractions, it suffices to
prove (i) when $P$ is an $\oI_{{\mathcal O}_{\group}^o}$-cell. 
Recall that  $\Theta$ obviously preserves colimits and $\Phi$ also preserves filtered colimits of sub-simplicial presheaves of 
a simplicial presheaf as proven in Lemma
~\ref{key.props}(iii). Moreover $\Theta$ obviously preserves pushouts while $\Phi$ also preserves pushouts along the generating cofibrations as 
observed in ~\eqref{pushouts.and.colimits.fixed.points}. Therefore, it suffices to consider the case when $P= (\group/H)_+ \wedge B$, 
where $i:A \ra B$ is generating cofibration in $\PSh_*$. 
This has  already been observed to be true, thereby proving  the first statement. 
\vskip .3cm
Observe that it suffices to prove the following in order to establish the second statement.
Let $X \eps \PSh^{{\mathcal O}_{\group}^o}$ be cofibrant and let $Y \eps \PSh^{\group,c}$
be fibrant. Then a morphism $f: \Theta (X) \ra Y$ in $\PSh^{ \group,c}$ is a 
weak-equivalence if 
and only if the corresponding map $g: X \ra \Phi(Y)$ in $\PSh^{{\mathcal O}_{\group}^o}$ 
is a weak-equivalence. Now the induced map $g: X \ra \Phi(Y)$ induced by $f$ by adjunction, factors
 as the composition $X {\overset {\eta} \ra} \Phi(\Theta (X)) {\overset {\Phi(f)} \ra } \Phi(Y)$. 
The first map is
an isomorphism since $X$ is cofibrant, so that the map $g:X \ra \Phi(Y)$ is a weak-equivalence if and
only if the map $\Phi(f)$ is a weak-equivalence. But the map $\Phi(f)$ is a weak-equivalence if and only if
each of the maps $f^H: X^H=(\Theta(X))^H \ra Y^H$ is a weak-equivalence, which is equivalent to 
$f$ being a weak-equivalence. This proves (ii). 
\vskip .3cm
(iii) \cite[Proposition A.2.8.2]{Lur} proves that all the model categories here are combinatorial.  The sources of the generating cofibrations and
 trivial cofibrations in $\PSh_*$ are cofibrant for both the projective and object-wise model structures: therefore, the same hold for the 
sources of the generating cofibrations and generating trivial cofibrations in the above diagram categories. These observations prove that the
model structures are tractable.
\vskip .3cm
(iv) Since $\PSh_*^{{\mathcal O}_{\group}^o}$ identifies with the category of pointed simplicial presheaves on the category ${{\mathcal O}_{\group}} \times \bS/S$,
and every object of the form $(\group/H)_+ \wedge h_U$, $U \eps \bS/S$, is cofibrant, we readily see that the statement (iv) is true for
$\PSh^{{\mathcal O}_{\group}^o}$. Since $\Theta \circ \Phi=id$, it follows that any object in $\PSh^{\group,c}$ is the image
of some object of $\PSh^{{\mathcal O}_{\group}^o}$. Since $\Theta$ commutes with colimits and cofibrations, and preserves
simplicial objects with an extra degeneracy, it follows any $F \eps \PSh^{ \group,c}$ is a homotopy colimit of cofibrant objects
of the form $(\group/H )_+\wedge h_U$, $U \eps \bS/S$. (See \cite[Lemma 2.7, Proposition 2.8]{Dug}.) This proves (iii).
\vskip .3cm
The proof of (v) is similar to the proof of
Proposition ~\ref{finite.pres.1} and is therefore skipped. (One needs to first observe that $P=P^{H'}$ for all 
subgroups $H'$ of $\group$ for which $H' \subseteq H$.)
\end{proof}
\vskip .3cm
\begin{remarks} (i) One may observe that the functor $\Phi$ evidently commutes with all small limits
while the functor $\Theta$ evidently commutes with all small colimits. Lemma ~\ref{key.props} (v) 
shows that, when $\group$ is profinite, the functor $\Phi$ also commutes with all small colimits.
\vskip .3cm
(ii) The above proposition proves that, instead of $\PSh_*^{ \group, c}$ ($\PSh/S^{\group, c}$), it suffices to
consider the diagram category $\PSh_*^{{\mathcal O}_{\group}^o}$ ($\PSh/S^{{\mathcal O}_{\group}^o}$, \res) which is often easier to handle.
\end{remarks}

\subsubsection{\bf Object-wise  model structures on $\PSh_*^{{\mathcal O}_{\group}^o} $ and on $\PSh/S^{{\mathcal O}_{\group}^o} $}
\label{objectwise.diagram.model.structs}
Even if we started with an object-wise model structure on $\PSh_*$ or $\PSh/S$, the model structures we produced so far on
$\PSh_*^{{\mathcal O}_{\group}^o} $ and on $\PSh/S^{{\mathcal O}_{\group}^o} $ have been projective model structures derived from
the original model structure on $\PSh_*$ and $\PSh/S$. However, since these are diagram categories one can provide a different {\it object-wise}
model structure on these categories which we proceed to discuss briefly and which will be also used in the construction of model
structures on the category of spectra. We will see in the next section  that this structure has certain advantages as far as constructing a category of
spectra with reasonable properties.
\vskip .3cm
We start with the object-wise model structures on $\PSh_*$ and $\PSh/S$. 
For the corresponding  model structures on $\PSh_*^{{\mathcal O}_{\group}^o} $ and on $\PSh/S^{{\mathcal O}_{\group}^o}$, a map $\{f_{\group/H}: X(\group/H) \ra Y(\group/H)|H \eps \W\}$ will be called a cofibration (weak-equivalence)
if each $f_{\group/H}$ is a cofibration (weak-equivalence) in $\PSh_*$ (or $\PSh/S$) and fibrations are defined by the right-lifting property
with respect to trivial cofibrations. Now \cite[Theorem 1.19]{Bar} and \cite[Proposition A.2.8.2]{Lur} show that the above structure defines
a combinatorial model structure on $\PSh_*^{{\mathcal O}_{\group}^o} $ and on $\PSh/S^{{\mathcal O}_{\group}^o} $. Clearly the domains
and codomains of the generating cofibrations and generating trivial cofibrations are all cofibrant, so that these are in fact
{\it tractable model structures}. 
\vskip .3cm
Given two diagrams $\{P(\group/H)|H \eps \W\}$ and $\{Q(\group/H)|H \eps \W\}$, one defines the smash-product
\be \begin{equation}
\label{smash.diagrams}
     (P \wedge Q)(\group/H) = P(\group/H) \wedge Q(\group/H).
    \end{equation} \ee
\vskip .3cm \noindent
With this tensor structure, $\PSh_*^{{\mathcal O}_{\group}^o} $ and on $\PSh/S^{{\mathcal O}_{\group}^o}$ are symmetric monoidal categories.
The pushout-product axiom is readily verified since it holds in $\PSh_*$ and $\PSh/S$. Moreover every object is cofibrant in this
model structure so that both $\PSh_*^{{\mathcal O}_{\group}^o} $ and on $\PSh/S^{{\mathcal O}_{\group}^o}$ are symmetric monoidal
model categories satisfying the monoidal axiom.
\begin{proposition} 
\label{diagram.cats.excellent}
With the above structure, both $\PSh_*^{{\mathcal O}_{\group}^o}$ and $\PSh/S^{{\mathcal O}_{\group}^o}$ are
 {\it excellent model categories} in the sense of \cite[Definition A.3.2.16]{Lur}.
\end{proposition}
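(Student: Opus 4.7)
The plan is to verify the four axioms (A1)--(A4) of \cite[Definition A.3.2.16]{Lur} one by one, in each case reducing the claim to a corresponding statement about pointed simplicial sets via the fact that both the model structure and the monoidal structure on the diagram categories are defined pointwise on $\mathcal{O}_{\group}^o \times \bS/S$. The guiding principle throughout is that a diagram category of pointed simplicial presheaves inherits an excellent structure from the excellent structure on pointed simplicial sets, provided the model structure on the diagram category is itself defined pointwise. Since we have already established (just before the statement) that $\PSh_*^{\mathcal{O}_{\group}^o}$ and $\PSh/S^{\mathcal{O}_{\group}^o}$, with the object-wise model structure, are combinatorial (even tractable) symmetric monoidal model categories satisfying the pushout-product axiom, axioms (A1) and the symmetric monoidal part of (A4) are immediate.

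For axiom (A2), I would first observe that in the object-wise model structure on $\PSh_*$ (resp.\ $\PSh/S$), a map is a cofibration precisely when it is a monomorphism, since simplicial sets have this property section-wise. Then a map $\{f_{\group/H}\}$ in the diagram category is a monomorphism if and only if each $f_{\group/H}$ is a monomorphism of pointed simplicial presheaves, if and only if each $f_{\group/H}$ is a cofibration in the object-wise structure on $\PSh_*$ (resp.\ $\PSh/S$), if and only if $\{f_{\group/H}\}$ is a cofibration in the diagram category. Stability of cofibrations under the monoidal product $(P \wedge Q)(\group/H) = P(\group/H) \wedge Q(\group/H)$ then reduces instantly to the fact that a pointwise smash product of object-wise monomorphisms of pointed simplicial presheaves is an object-wise monomorphism, which in turn reduces to the corresponding statement for pointed simplicial sets.

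For axiom (A3), I would again argue pointwise. Weak equivalences in the object-wise model structure on the diagram category are those $\{f_{\group/H}\}$ for which each $f_{\group/H}$ is a weak equivalence section-wise in $\PSh_*$ (or $\PSh/S$). Filtered colimits in the diagram category are computed object-wise at each $\group/H$, and filtered colimits of simplicial presheaves are computed section-wise. Hence stability of weak equivalences under filtered colimits reduces to the classical fact that filtered colimits preserve weak equivalences of pointed simplicial sets. Finally, the unit for the smash product is the constant diagram at $S^0$ (respectively $S_+$), which is cofibrant in the object-wise model structure since every object is cofibrant; combined with the pushout-product axiom already verified in Theorem~\ref{eq.model.struct.1} and the remarks immediately preceding the proposition, this completes (A4).

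The main obstacle, if any, is purely notational: ensuring that the pointwise description of the monoidal product, of cofibrations, and of weak equivalences in the object-wise model structure on the diagram categories is carefully enough articulated that all four axioms reduce cleanly to their analogues for pointed simplicial sets, which are known to be excellent.
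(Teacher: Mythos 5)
There is a genuine gap: Lurie's Definition A.3.2.16 has \emph{five} axioms, not four. You verify (A1)--(A4) correctly and in essentially the same pointwise-reduction spirit as the paper, but you have omitted the invertibility axiom (A5) entirely, and this is precisely the axiom that does not reduce trivially to a statement about pointed simplicial sets. The other four axioms concern data (combinatoriality, cofibrations as monomorphisms, closure of weak equivalences under filtered colimits, the pushout-product axiom) that are manifestly pointwise; the invertibility hypothesis is a global condition on localizations of the homotopy category and cannot be checked one object $\group/H$ at a time.

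The paper handles (A5) by invoking \cite[Lemma A.3.2.20]{Lur}, which transfers the invertibility hypothesis along a monoidal left Quillen functor whose source is already known to be excellent. Concretely, the paper observes that the composite of the constant-presheaf functor $C$ from pointed simplicial sets into $\PSh_*$ (resp.\ $\PSh/S$) with the constant-diagram functor ${\mathfrak C}$ of \eqref{free.diagram} is a monoidal left Quillen functor (its right adjoint is evaluation at $\group/\{e\}$, which preserves fibrations since any object-wise fibration is in particular a projective fibration), and that pointed simplicial sets with the Kan--Quillen model structure is an excellent monoidal model category. Your proof needs this (or some substitute) before it is complete; without it, you have verified only four of the five clauses of the definition you are appealing to.
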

\begin{proof} The proof consists in showing that these categories satisfy the axioms (A1) through (A5) in \cite[Definition A.3.2.16]{Lur}.
We already observed that  these categories are combinatorial, which verifies the axiom (A1).  Since every monomorphism in $\PSh_*$ and
$\PSh/S$ is a cofibration and the cofibrations in the above diagram categories are defined object-wise, it follows that very monomorphism 
is a cofibration. In fact cofibrations in both the categories identify with monomorphisms. Therefore, cofibrations  in both 
these categories are closed under products. This verifies axiom (A2). 
\vskip .3cm
Next observe that in the category of pointed simplicial sets, the domains and codomains of the generating cofibrations and trivial cofibrations
are finitely presented.  It follows (see \cite[Definition 3.4, Lemma 3.5]{Dund1} that the model category of pointed simplicial sets
is weakly finitely generated and hence that weak-equivalences are closed under filtered colimits. Since weak-equivalences in
$\PSh_*$, $\PSh/S$, $\PSh_*^{{\mathcal O}_{\group}^o}$ and $\PSh/S^{{\mathcal O}_{\group}^o}$ are defined object-wise, it follows
the same holds for the class of weak-equivalences in these categories, thereby proving axiom (A3). We already know that the above diagram categories are 
monoidal model
categories which verifies the axiom (A4). Therefore, it remains to verify the invertibility axiom (A5). 
\vskip .3cm
For this we make use of \cite[Lemma A.3.2.20]{Lur} as follows. First the functor sending
a pointed simplicial set to the associated constant simplicial presheaf in $\PSh_*$ and $\PSh/S$ 
as defined in ~\ref{simpl.sets}
is a monoidal left Quillen functor. So also is the constant-diagram functor considered in ~\eqref{free.diagram}, ${\mathfrak C}: \PSh_* \ra \PSh^{{\mathcal O}_{\group}^o}$
and ${\mathfrak C}:\PSh/S \ra \PSh/S^{{\mathcal O}_{\group}^o}$. (To see that these are left-Quillen functors, observe that their right adjoint
is the functor sending a diagram $\{P(\group/K)|K \eps \W \} \mapsto P(\{e\})$, where $e$ denotes the identity element of $\group$. Since any fibration in
the object-wise model structure on $\PSh_*^{{\mathcal O}_{\group}^o}$ and $\PSh/S^{{\mathcal O}_{\group}^o}$ is a fibration in the projective model structure,
it follows that the above functor preserves fibrations.) The functor ${\mathfrak C}$ is also clearly a monoidal functor as may be seen from the 
definition of the monoidal structure on $\PSh_*^{{\mathcal O}_{\group}^o}$ and $\PSh/S^{{\mathcal O}_{\group}^o}$ defined in ~\eqref{smash.diagrams}.
Therefore, so is their composition and the monoidal category of pointed simplicial sets 
satisfies the axioms (A1) through (A4) in \cite[Definition A.3.2.16]{Lur}. This  proves the proposition.
\end{proof}
\subsection{\bf Localization with respect to an interval and simplicial sheaves up to homotopy}
\label{desc:a1}
Presently we will assume that our site $\bS$ comes provided with the structure of a site with an interval $\bI$
in the sense of \cite[2, 2.3]{MV}.
One may apply localization with respect to $\bI$ to  each of the above model structures
as in \cite{MV}. The resulting localized model categories will be denoted with the subscript $\bI$. 
\vskip .3cm
The fibrant 
objects $F$ in these categories are characterized by the property that they are
fibrant in the underlying  model structure and that $Map(P \times \bI, F)
\simeq Map(P, F)$, for all $P \eps {\rm {PSh}}_*^{\group}$ ($P \eps \PSh/S^{\group}$). Such fibrant objects will
be called {\it $\bI$-local}. 

\vskip .3cm
\begin{definition}
 \label{motivic.fibrance}
Next, there are two means of passing to {\it sheaves or rather sheaves up to homotopy}. When the topology is 
defined by a complete cd-structure (see \cite{Voev-cd}) as in 
the case of the 
Nisnevich topology, one defines a presheaf $P \eps {\rm {PSh}}_*^{\group}$ to be $\bI$-local or
{\it $\bI$-fibrant} if (i) $P$ is fibrant in $\PSh_*^{\group}$, (ii) $\Gamma (\phi, P) $ is contractible (where $\phi$ denotes the empty 
scheme), (iii) sends a {\it
distinguished square} as in \cite{MV} to a homotopy cartesian square and (iv) the obvious pull-back 
$\Gamma (U, P) \ra \Gamma (U \times \bI, P)$ is a weak-equivalence. 
\end{definition}
\begin{example}
 Of course one of of the main examples of the above framework is when $\bS$ is a category of schemes over a given base scheme
provided with a completely decomposed topology, for example, the Nisnevich topology for smooth schemes or the cd-topology
 for general schemes. In this case the interval $\bI$ is the affine line ${\mathbb A}^1$. Again there are finer variants of
the above: for example, $\bS$ could be all smooth schemes of finite type over a given base scheme with the Nisnevich topology,
or $\bS$ could be the subcategory of smooth schemes provided with an action by a smooth group scheme defined over the base scheme
with morphisms being equivariant maps. It could also be cdh-analogue of the above sites.
\end{example}

Then a map $f: A \ra B$ 
 in  $\PSh_*^{\group}$ is an $\bI$-local weak-equivalence if the induced map $Map(f, P)$ is
a weak-equivalence for every $\bI$-local object $P$, with $Map$ denoting the simplicial
mapping space. One then localizes such weak-equivalences.  The resulting model structure will be denoted 
 ${\rm {PSh}}^{\group}_{*,\bI}$. The above definitions also apply to $\PSh/S$ and the resulting model category will be
denoted $\PSh/S_{\bI}^{\group}$.
\vskip .3cm
It may be important to specify the generating trivial cofibrations for the localized category, which we 
proceed to do now: see 
\cite[Definition 2.14]{Dund1}. For any  distinguished square
\[Q= \xymatrix{{P} \ar@<1ex>[r] \ar@<-1ex>[d] & Y \ar@<1ex>[d]^{\phi}\\
U \ar@<1ex>[r]_{\psi} & X}\]
\vskip .3cm \noindent
we factor the induced map $h_P \ra h_Y$ as a cofibration (using the simplicial mapping cylinder)
$h_P \ra C $ followed by a simplicial homotopy equivalence $C \ra h_Y$ and similarly factor the induced
map $sq=h_U{\underset {h_P} \sqcup} C \ra h_X$ as a cofibration $sq \stackrel{q}{\ra} tq$ followed
by a simplicial homotopy equivalence $tq \ra h_X$. Similarly we factor
the obvious map  $h_{U \times \bI} \ra h_U$ into a cofibration $u:h_{U \times \bI} \ra 
C_u$ followed by a simplicial homotopy equivalence $C_u \ra h_U$. 
 Let 
\vskip .3cm
\be \begin{equation} 
     \label{tilde.J}
      \begin{split}
 {\oJ}' =\{(\group/H)_+ \wedge *_+ \ra h_{\phi})|H \} \cup \{(\group/H)_+ \wedge u:(\group/H)_+ \wedge  h_{U \times \bI} \ra (\group/H )_+ \wedge C_u \mid U \eps 
\bS/S\}\\
 \cup \{(\group/H )_+ \wedge q: (\group/H )_+ \wedge sq\ra (\group/H )_+\wedge tq \mid q \mbox { is an elementary distinguished square } \}  \end{split} 
\end{equation} \ee
\vskip .3cm \noindent
Then one adds the set $\oJ'$ to the set of generating trivial cofibrations in $\PSh_*^{\group}$ to obtain a set of generating trivial cofibrations for
the localized model structure. One may perform corresponding localizations on the diagram categories $\PSh_*^{{\mathcal
O}_{\group}^o}$, $\PSh/S^{{\mathcal
O}_{\group}^o}$ as well as  $\PSh_*^{\group,c }$ and $\PSh/S^{\group,c }$ by a very similar process. The resulting categories will be denoted 
$\PSh_{*,{\bI}}^{{\mathcal O}_{\group}^o}$, $\PSh/S_{\bI}^{{\mathcal O}_{\group}^o}$, $\PSh^{\group, c}_{*, \bI}$ and $\PSh/S^{\group, c}_{\bI}$.
\vskip .3cm
When $\group$ is a finite or profinite group,  both the object-wise and projective model structures on $\PSh_*^{\group,c }$ and $\PSh/S^{\group. c}$ 
are 
tractable simplicial model categories (see \cite[Chapter 4]{Hirsch} and \cite{Bar} for  basic results on localization) and localization preserves these
properties. In the same situation, the projective model structures on 
 $\PSh_*^{\group}$ and $\PSh/S^{\group}$ are also cellular model categories and the localization preserves this property. 
\vskip .3cm
An alternate approach that applies in general is to localize by inverting hypercovers as in \cite{DHI}.
Following \cite{DHI}, a simplicial presheaf has the {\it descent property}
for all hypercovers if for $U$ in $(\bS/S)_?$, and all hypercoverings $U_{\bullet} \ra U$,
the induced map $P(U) \ra \holimD \{\Gamma (U_n, P)|n\}$ is a weak-equivalence.
By localizing with respect to
maps of the form $U_{\bullet} \ra U$ where $U_{\bullet}$ is a 
hypercovering of $U$ and also maps of the form $U \times \bI \ra U$, it is proven in
 \cite[Theorem 8.1]{Dug} and \cite[Example A. 10]{DHI} that we obtain
a model category which is Quillen equivalent to the Voevodsky-Morel model category of simplicial 
sheaves on $(\bS/S)_?$ as in \cite{MV}. Though the resulting localized category
is cellular and left-proper (see \cite[Chapters 12 and 13]{Hirsch}) it is unlikely to be weakly finitely generated:
the main issue  is that the hypercoverings, being simplicial objects, need not be small. Nevertheless
this seems to be the only alternative available in the \'etale setting.
This localized category of simplicial presheaves will  be
 denoted by $\PSh_{*,{des}}^{\group, c}$. (In this case one adds 
$\{ (\group/H )_+ \wedge h_{U_{\bullet}} \ra (\group/H )_+\wedge Cyl(h_{U_{\bullet}} \ra h_U)|U\} \cup
\{(\group/H)_+  \wedge Cyl(u):(\group/H)_+ \wedge  h_{U \times \bI} \ra Cyl(u)|u:h_{U \times \bI} \ra U\}$ to the 
generating trivial cofibrations in $\PSh_*^{\group}$ to obtain
a set of generating trivial cofibrations for the localized category.  Here $Cyl$ denotes the obvious mapping cylinder.) 
Similar definitions apply to $\PSh/S$, $\PSh_*^{{\mathcal
O}_{\group}^o}$, $\PSh/S^{{\mathcal
O}_{\group}^o}$. The resulting localized model category corresponding to $\PSh/S$ ($\PSh_*^{{\mathcal
O}_{\group}^o}$, $\PSh/S^{{\mathcal
O}_{\group}^o}$) will be
denoted $\PSh/S_{des}^{\group, c}$ ($\PSh_{*,{des}}^{{\mathcal
O}_{\group}^o}$, $\PSh/S_{des}^{{\mathcal
O}_{\group}^o}$, \res).
\vskip .3cm

\begin{proposition}
 \label{localization.1}
(i) The localized model categories obtained from $\PSh_*^{{\mathcal O}_{\group}^o}$,  $\PSh/S^{{\mathcal O}_{\group}^o}$ are tractable model categories.
The same holds for the localized model categories obtained  from $\PSh_*^{\group, c}$ and $\PSh/S^{\group, c}$ when $\group$ is profinite or finite.
The above localized model categories are left-proper when the original categories are left proper.
\vskip.3cm \noindent
(ii) These are also
symmetric monoidal model categories with $\wedge$ (or $\wedge^S$) as the monoidal structure. 
\vskip .3cm \noindent
(iii) The localized diagram categories $\PSh_*^{{\mathcal O}_{\group}^o}$ and $\PSh/S^{{\mathcal O}_{\group}^o}$ with the object-wise model 
structures (and when $\PSh_*$, $\PSh/S$ are provided with the object-wise model structures)
are excellent model
categories.
\end{proposition}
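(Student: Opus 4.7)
My plan is to treat the three parts of Proposition \ref{localization.1} by leveraging the results already established for the unlocalized categories (Theorem \ref{eq.model.struct.1} and Propositions \ref{Phi.Theta.props}, \ref{diagram.cats.excellent}) and appealing to general preservation results for left Bousfield localization.

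For part (i), the strategy is to invoke the standard theorem that a left Bousfield localization of a tractable, left-proper, combinatorial model category at a \emph{set} of maps is again tractable and left-proper (see \cite[Theorem 4.7]{Bar} and \cite[Chapter 4]{Hirsch}). The generating cofibrations of the localized category are the same as those of the unlocalized category, and the generating trivial cofibrations are obtained by adjoining the explicit set $\oJ'$ described in \eqref{tilde.J} (or the analogous set in the descent case). Since the domains in $\oJ'$ are built from cylinders on representables smashed with $(\group/H)_+$, they are evidently cofibrant, which preserves tractability. The hypotheses are met by Theorem \ref{eq.model.struct.1} for $\PSh_*^{\group,c}$ and $\PSh/S^{\group,c}$ precisely when $\group$ is finite or profinite, and by Proposition \ref{Phi.Theta.props}(iii) for the diagram categories in general.

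For part (ii), I would invoke the criterion that a left Bousfield localization of a symmetric monoidal model category $\C$ at a set $\oJ'$ is again monoidal provided: (a) the pushout--product of any element of $\oJ'$ with any generating cofibration is a weak equivalence in the localized structure, and (b) the unit remains cofibrant. Condition (b) is unchanged from Theorem \ref{eq.model.struct.1}. For (a), the key computation is that for a distinguished square $q$ and a generating cofibration $i\colon A \to B$, the pushout--product $((\group/H)_+ \wedge q) \Box i$ smashes out to $(\group/H)_+ \wedge (q \Box i)$; since $q$ is itself a localizing equivalence, so is $q \Box i$ once we know cofibrancy of the sources, so the smash becomes a trivial cofibration in the localized category. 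The same argument handles the interval projection $u$ and the map $*_+ \to h_\phi$. This reduces to the monoidal axiom we already proved in the unlocalized setting.

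For part (iii), I would verify the five excellence axioms (A1)--(A5) of \cite[Definition A.3.2.16]{Lur} for the localized object-wise diagram categories. (A1) is just part (i). (A2) holds because cofibrations in these object-wise structures coincide with monomorphisms, and the cofibrations are unchanged by left Bousfield localization. (A4) is part (ii). For (A3), the closure of weak equivalences under filtered colimits in the localized category follows because a map $f$ is an $\bI$-local (or descent) weak equivalence iff $\Map(f, F)$ is a weak equivalence of simplicial sets for every local-fibrant $F$, and $\Map(-, F)$ sends filtered colimits to filtered limits of Kan complexes, where weak equivalences are stable under filtered limits of fibrations. For (A5), the invertibility axiom, the argument reduces to checking that homotopy pushouts along generating cofibrations preserve localized weak equivalences, which follows from left properness (part (i)) combined with the monoidal structure of part (ii). The main obstacle I expect is condition (A3): one must be careful because the localized weak equivalences are not defined object-wise and a priori need not be closed under filtered colimits, so the argument above via mapping into local-fibrant objects must be executed carefully, in particular requiring that one can choose a \emph{set} of local-fibrant test objects detecting local equivalences---this is precisely the content of combinatoriality established in (i).
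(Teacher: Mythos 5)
For part (i) your approach matches the paper, but there are genuine gaps in (ii), (iii)(A3), and (iii)(A5).

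\textbf{Part (ii).} Your argument is circular. You write ``since $q$ is itself a localizing equivalence, so is $q\,\Box\,i$ once we know cofibrancy of the sources.'' But the claim that the pushout--product of a local trivial cofibration with a cofibration is again a local trivial cofibration \emph{is} the monoidal axiom you are trying to prove; it does not come for free from cofibrancy. The nontrivial content is that smashing with a cofibrant object sends the localizing maps in $\oJ'$ to $\bI$-local equivalences, and this requires a separate verification (reduce by two-out-of-three to showing $q\wedge C$ is a local equivalence for $C$ in a set of cofibrant homotopy generators, then check this for distinguished squares and the interval projections). The paper avoids this by citing \cite[Proposition 3.19]{Bar} together with the existence, from Proposition~\ref{Phi.Theta.props}(iv), of a set of cofibrant homotopy generators; that proposition packages precisely the verification you are glossing over.

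\textbf{Part (iii), axiom (A3).} Your argument does not work. You say: $f$ is a local weak equivalence iff $\Map(f,F)$ is a weak equivalence of Kan complexes for all local-fibrant $F$, and then pass to the filtered colimit of $\{f_\alpha\}$. But $\Map(-,F)$ turns a filtered \emph{colimit} of sources into a filtered \emph{inverse limit} of mapping spaces, and weak equivalences of Kan complexes are \emph{not} closed under filtered inverse limits --- that is the whole reason $\lim^1$-terms exist. Your caveat about "filtered limits of fibrations" does not save this: the transition maps in the inverse system $\alpha\mapsto\Map(A_\alpha,F)$ need not be fibrations, and even for towers of fibrations one only gets a homotopy limit, not a preservation of weak equivalences without further hypotheses. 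The paper's actual argument is different and correct: factor $f$ object-wise as a projective trivial cofibration followed by a fibration, use that trivial fibrations in a left Bousfield localization \emph{coincide} with those in the unlocalized structure (\cite[Proposition 3.3.3(1)(b)]{Hirsch}), and then invoke closure of trivial fibrations under filtered colimits in the original object-wise structure, which holds because pointed simplicial sets are weakly finitely generated.

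\textbf{Part (iii), axiom (A5).} You have misidentified the content of the invertibility hypothesis. Axiom (A5) of \cite[Definition A.3.2.16]{Lur} is \emph{not} the statement that homotopy pushouts along cofibrations preserve weak equivalences (that is left properness, axiom (A1)/(A2) territory); it is a statement about formally inverting morphisms in enriched categories being computable by a pushout. Your planned reduction does not address it. The paper's route is to invoke \cite[Lemma A.3.2.20]{Lur}: the identity functor from the unlocalized object-wise diagram category (already known excellent by Proposition~\ref{diagram.cats.excellent}) to its $\bI$-localization is a monoidal left Quillen functor, and that lemma transfers the invertibility hypothesis along such a functor. You should replace your (A5) argument with that or with a direct verification of Lurie's Definition A.3.2.12.
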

\begin{proof} The first statement follows from the observation that the original categories before localization are all tractable model categories
and localization preserves these properties. (See \cite[Theorem 2.15]{Bar}.) It also preserves left-properness. 
Proposition ~\ref{Phi.Theta.props}(iv) shows
all the categories have sets of homotopy generators that are cofibrant. (Recall that objects of the form $(G/H)_+  \wedge (\Delta[n] \rtimes h_U)$ are
cofibrant in both the object-wise and projective model structures on $\PSh_*^{\group, c}$.) Therefore, \cite[Proposition 3.19]{Bar} applies
to complete the proof of the second statement. (We are invoking \cite[Proposition 3.19]{Bar} with ${\bf V}$ denoting the category of
pointed simplicial sets, so that the ${\bf V}$-enriched hom is simply the usual mapping space $Map$. ${\bf H}$ corresponds to the set
$\{ U \times \bI \ra U | U \}$ so that the ${\bf H}/{\bf V}$-local objects considered in \cite[Proposition 3.19]{Bar} are simply the 
$\bI$-local pointed simplicial presheaves.) These prove both statements (i) and (ii). 
\vskip .3cm
They also show that the diagram
categories in (iii) satisfy the axioms (A1), (A2) and (A4) in \cite[Definition A.3.2.16]{Lur}. 
We proceed to verify that weak-equivalences are stable
by filtered colimits. Let $\{f_{\alpha}: A_{\alpha} \ra B_{\alpha}|\alpha \eps I\}$ denote filtered direct system of maps in one of the above model categories.
By considering the model category of $I$-diagrams provided with the projective model structure, one may readily see that the functor $\colim_{\alpha}$ is 
a left Quillen functor and therefore preserves trivial cofibrations. Now one may functorially factor $f=\{f_{\alpha}| \alpha\}$ into the composition of
a trivial cofibration $i=\{i_{\alpha}|\alpha \}$ and a fibration $p = \{p_{\alpha}|\alpha\}$. Since $f$ is a weak-equivalence, it follows that
so is $p$. Therefore, it suffices to show that $\colim_{\alpha} p_{\alpha}$ is a trivial fibration in the given model category. i.e. We reduce
to showing that if each $f_{\alpha}$ is a trivial fibration, then so is $\colim_{\alpha} f_{\alpha}$. But the trivial fibrations in the model category
obtained by left-Bousfield localization from another model category are the same as in the model category before localization: see 
\cite[Proposition 3.3.3 (1)(b)]{Hirsch}.
The corresponding property clearly holds in $\PSh_*^{{\mathcal O}_{\group}^o}$ and in $\PSh/S^{{\mathcal O}_{\group}^o}$ since the same property holds
in the model category of pointed simplicial sets (which is weakly finitely generated.) These verify that the axiom (A3) holds in the localized model categories
considered above.
\vskip .3cm
Next we proceed to verify that the axiom (A5) in \cite[Definition A.3.2.16]{Lur} also holds.
For this, we make use of Proposition ~\ref{diagram.cats.excellent} which showed that the diagram categories
$\PSh_*^{{\mathcal O}_{\group}^{o}}$ and $\PSh/S^{{\mathcal O}_{\group}^{o}}$ are excellent model categories.
One can readily verify that the identity functor that sends the last model  categories to the corresponding $\bI$-localized model 
categories are also
 left Quillen monoidal functors. Therefore, these verify the hypotheses of 
\cite[Lemma A.3.2.20]{Lur} proving that the localized diagram 
categories in (iii) satisfy axiom (A5)
in \cite[Definition A.3.2.16]{Lur} thereby also completing the proof of the proposition.
\end{proof}
\begin{remark} Localization with respect to an interval is better discussed in detail in the
stable setting, which we consider
below. Moreover, since the etale homotopy type of affine spaces is trivial after
completion away from the 
 characteristic of the base field, ${\mathbb A}^1$-localization in the \'etale
setting is often simpler than the
corresponding motivic version as shown below. 
\end{remark}
\vskip .3cm \noindent
\subsubsection{\bf Equivariant topologies}
\label{equiv.covers}
So far we let the category $\bS$ be fairly general and the action of $\group$ was only on the simplicial presheaves on the category $\bS$.
However, as the example of equivariant G-theory, K-theory and equivariant cycle theories show there are important simplicial 
presheaves that are defined only on objects provided with a group-action, i.e. one needs to restrict to objects in $\bS$
 provided with actions by the given group.  We proceed to consider various sites that arise in 
this equivariant context.
\begin{definition}(Equivariant topologies) 
\label{equiv.tops}
Let $\bS$ denote a site as before and let $\group$ denote a presheaf of groups
 defined on $\bS$. Then we let $\bS^{ \group}$ denote the subcategory of objects 
$Y \eps \bS$ provided with an action by $\group$, with
morphisms being $\group$-equivariant maps in $\bS$. (i.e. Here we are regarding $Y$ as the presheaf represented by the object $Y$ and assuming
that the presheaf $Y$ has an action by the presheaf $\group$.) 
\vskip .3cm
When  $\bS$ is provided with a Grothendieck topology $?$, one can define a Grothendieck topology on $\bS^{\group}$ as follows. 
Coverings $\{U_i \ra X|i\}$ of a given object $X \eps \bS^{\group}$ will be a set of morphisms $\{U_i \ra X|i\}$ in $\bS^{ \group}$ so that 
(i) it is a covering
in the given topology $?$ on $\bS$ and (ii) it satisfies possibly other conditions so that such coverings define a Grothendieck topology
on $\bS^{\group}$. These will be referred to as equivariant sites.
\end{definition}
\begin{examples} One obvious example of  an equivariant site is to simply let the coverings in $\bS^{\group}$ be
families of morphisms $\{U_i \ra X|i \eps I\}$ in $\bS^{\group}$ so that $\{U_i \ra X|i \eps I\}$ is a covering in
the given topology on $\bS$. However, very often one has to add additional hypotheses so that the coverings in
$\bS^{ \group}$ form a smaller family, in general. 
\vskip .3cm
For example, $\bS$ could be a category schemes of finite type over a given base-scheme $\B$ and $\group$ is defined by an affine group scheme
over $\B$. Now $\bS$ could be the Nisnevich or \'etale sites of smooth schemes over $\B$. Then one could let $\bS^{\group}$ denote
the isovariant Nisnevich or \'etale sites: the isovariant \'etale sites were considered in \cite{T2} and extended to stacks in \cite{J03}, while
the isovariant Nisnevich sites were considered in \cite{Serp}. One can also consider the equivariant Nisnevich sites considered in
\cite{KO} (or the $H$-Nisnevich topology considered in \cite{Her}) which have more coverings, in general, than the isovariant Nisnevich site.
 \end{examples}
Since we assume that $\bS^{\group}_{?}$ is a Grothendieck topology, hypercoverings of objects in this site may be defined as 
usual. The category of hypercoverings in $\bS^{\group}_{?}$ for a given $X$ will be denoted $HR(X, \group)$: since this category is not filtered
in general, we take its associated homotopy category, which will be denoted ${\overline {HR}}(X, \group)$. We now obtain the
following results, which follow from standard arguments: see \cite[Chapter 24, section 9]{StPr}, for example.
\begin{proposition} (Cohomology from equivariant hypercoverings)
 \label{coh.equiv.hypercover}
Let $P \eps \AbPsh(\bS^{ \group}_?)$ be an additive presheaf (i.e. one that takes disjoint unions of objects in $\bS$ to products) and let $aP$ denote the associated abelian sheaf. 
Given any $X \eps \bS^{\group}_?$, we obtain the isomorphism
\[H^*_?(X, aP) \cong {\underset  {U_{\bullet} \eps {\overline {HRR}}(X, G)} \colim } H^*(\Gamma(U_{\bullet}, aP)) 
\cong
{\underset  {U_{\bullet} \eps {\overline {HRR}}(X, G)} \colim } H^*(\Gamma(U_{\bullet}, P))\]
\vskip .3cm \noindent
that is functorial in $P$.
\end{proposition}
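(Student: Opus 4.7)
The plan is to follow the classical Verdier-style hypercovering argument, adapted to the equivariant site $\bS^{\group}_?$. Since $\bS^{\group}_?$ is assumed to be a genuine Grothendieck site, the whole machinery of hypercoverings and their homotopy theory is available, and the proof proceeds in three main steps.

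First, I would verify that the homotopy category ${\overline{HR}}(X,\group)$ is filtered (equivalently, cofiltered as a category of hypercoverings ordered by refinement). This is the classical result of Verdier (SGA 4, Exp.~V, App.): given two hypercoverings $U_\bullet,V_\bullet$ of $X$ and two simplicial maps between them, one constructs a common refinement and a simplicial homotopy between the two maps. The construction is purely simplicial and only uses that $\bS^{\group}_?$ is a site with finite limits in the covering sense, so it carries over verbatim to the equivariant setting. Filteredness is what makes the filtered colimits in the statement well-behaved and exact.

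Second, I would establish the first isomorphism, $H^*_?(X,aP)\cong \colim H^*(\Gamma(U_\bullet,aP))$. Pick an injective resolution $aP\to I^\bullet$ in the category of abelian sheaves on $\bS^{\group}_?$. For each hypercovering $U_\bullet\to X$ form the bicomplex $\Gamma(U_\bullet,I^\bullet)$ and consider its two spectral sequences. Because each $I^q$ is an injective sheaf, the augmentation $\Gamma(X,I^q)\to \Gamma(U_\bullet,I^q)$ is a quasi-isomorphism of cochain complexes: this uses that for an injective abelian sheaf $J$ and any hypercovering $U_\bullet$, the Čech-type complex $\Gamma(U_\bullet,J)$ has cohomology $\Gamma(X,J)$ in degree $0$ and vanishes higher up, a fact which follows from the Yoneda-type identity $H^p(\Gamma(U_\bullet,J))= \mathrm{Ext}^p(\mathbb{Z}_{U_\bullet},J)$ and the fact that $\mathbb{Z}_{U_\bullet}\to \mathbb{Z}_X$ is a quasi-isomorphism of sheaves of abelian groups on $\bS^{\group}_?$ (hypercoverings are acyclic). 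The other spectral sequence gives $H^*(\Gamma(U_\bullet,aP))$ after passing to the filtered colimit over $U_\bullet$, which is exact, and commutes with cohomology; comparing the two abutments yields the isomorphism.

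Third, I would deduce the second isomorphism by showing that $P\to aP$ induces an isomorphism after $\colim_{U_\bullet}H^*(\Gamma(U_\bullet,-))$. Let $K$ and $C$ be the presheaf kernel and cokernel of $P\to aP$; both are \emph{locally zero} presheaves in the sense that every section is killed by some covering. The short exact sequence $0\to K\to P\to aP\to C'\to 0$ (broken into two short exact sequences via the image presheaf) gives a long exact sequence after applying $\Gamma(U_\bullet,-)$ and passing to cohomology; so it suffices to prove that for a locally zero additive presheaf $L$ one has $\colim_{U_\bullet}H^*(\Gamma(U_\bullet,L))=0$. Given a cocycle $s\in \Gamma(U_n,L)$ for some hypercovering $U_\bullet$, local vanishing provides a covering of each component of $U_n$ on which $s$ restricts to zero; one then uses the filteredness from step one to refine $U_\bullet$ to a hypercovering $V_\bullet$ through which $s$ pulls back to zero, proving the required vanishing.

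The main obstacle is the third step: one must be sure that the refinement argument for locally zero presheaves genuinely works on the equivariant site $\bS^{\group}_?$, i.e.\ that refinements of an equivariant hypercovering by equivariant coverings exist with enough flexibility to kill cocycles in all simplicial degrees. This is where additivity of $P$ is used (to reduce from general covering families to single morphisms), together with the assumption that $\bS^{\group}_?$ is an honest site, so that equivariant coverings compose and pull back. Once this is in hand, the two isomorphisms follow formally, and functoriality in $P$ is automatic from the construction.
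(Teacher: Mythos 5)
Your proof is correct and is in essence the argument that the paper invokes only by reference to the Stacks Project (in lieu of giving its own proof), so you are following the same approach: your three steps --- Verdier's refinement-and-homotopy lemma establishing that ${\overline{HR}}(X,\group)$ is filtered, the injective-resolution bicomplex comparison identifying $\colim_{U_\bullet}H^*(\Gamma(U_\bullet,aP))$ with $H^*_?(X,aP)$, and the vanishing of $\colim_{U_\bullet}H^*(\Gamma(U_\bullet,L))$ for locally-zero $L$ to pass from $aP$ back to $P$ --- are precisely what that reference unpacks to. The only small imprecision is your parenthetical about where additivity enters: its essential role is to guarantee that $\Gamma(U_n,P)\cong\prod_i\Gamma(U_{n,i},P)$ when $U_n$ is a finite coproduct in $\bS^{\group}_?$, so that the hypercovering cochain complexes of $P$ and of $aP$ are comparable degree by degree, rather than merely to reduce covering families to single morphisms.
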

\begin{corollary} 
\label{coh.via.HRR.1}
Let $P \eps \AbPsh(\bS^{\group}_?)$ be an additive presheaf and let $X, Y \eps \bS^{\group}$ be fixed
  objects. Then one also obtains an isomorphism
\[Ext^*_G(Z(X_+ \wedge S^s \wedge Y), aP) \cong 
{\underset  {U_{\bullet} \eps {\overline {HRR}}(X, G)} \colim } H^*(Hom_G(Z(U_{\bullet, +} \wedge S^s \wedge Y), P)),\]
where $Hom_G$ denotes Hom in the category $\AbPsh(\bS/^{\group}, ?)$ and $Ext_G$ denotes the derived
functor of the corresponding $Hom_G$ for $G$-equivariant abelian sheaves.
 \end{corollary}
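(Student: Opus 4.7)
The plan is to reduce the corollary to Proposition~\ref{coh.equiv.hypercover} by introducing an auxiliary additive presheaf. Define $\tilde{P} \eps \AbPsh(\bS^{\group}_?)$ by
\[
\tilde{P}(U) := Hom_G(Z(U_+ \wedge S^s \wedge Y), P),
\]
where $S^s$ carries the trivial $\group$-action and $\group$ acts diagonally on $U_+ \wedge S^s \wedge Y$. Additivity of $\tilde{P}$ follows at once from the additivity of $P$ together with the identity $Z((U \sqcup V)_+ \wedge S^s \wedge Y) = Z(U_+ \wedge S^s \wedge Y) \oplus Z(V_+ \wedge S^s \wedge Y)$. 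For any hypercover $U_{\bullet} \ra X$ in $\bS^{\group}_?$ one then has by construction the level-wise identification
\[
\Gamma(U_{\bullet}, \tilde{P}) = Hom_G(Z(U_{\bullet,+} \wedge S^s \wedge Y), P),
\]
so the right-hand side of the corollary is precisely ${\underset {U_\bullet \eps \overline{HRR}(X, \group)} \colim}\, H^*(\Gamma(U_\bullet, \tilde{P}))$. Applying Proposition~\ref{coh.equiv.hypercover} to the additive presheaf $\tilde{P}$ immediately rewrites this colimit as $H^*_?(X, a\tilde{P})$.

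It then suffices to produce a natural isomorphism $H^*_?(X, a\tilde{P}) \cong Ext^*_G(Z(X_+ \wedge S^s \wedge Y), aP)$. I would obtain this via the tensor--Hom adjunction. As a presheaf, $\tilde{P}$ is precisely $\uhom_G(Z(S^s \wedge Y), P)$, and after sheafification and passage to the derived category of $\group$-equivariant abelian sheaves one expects $a\tilde{P} \simeq R\uhom_G(Z(S^s \wedge Y), aP)$. The derived adjunction then gives
\[
R\Gamma(X, a\tilde{P}) \simeq RHom_G(Z(X_+) \otimes Z(S^s \wedge Y), aP) \simeq RHom_G(Z(X_+ \wedge S^s \wedge Y), aP),
\]
and passing to cohomology completes the identification.

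The hard part will be justifying the derived identification $a\tilde{P} \simeq R\uhom_G(Z(S^s \wedge Y), aP)$, since sheafification does not commute with internal Hom in general. A cleaner implementation bypasses this step by a double-complex argument: fix an injective resolution $aP \to I^{\bullet}$ in $\AbSh(\bS^{\group}_?)$ and, for each hypercover $U_{\bullet}$, form the double complex $Hom_G(Z(U_{\bullet,+} \wedge S^s \wedge Y), I^{\bullet})$. Since $Z(S^s \wedge Y)$ is a bounded complex of flat abelian sheaves, smashing with it preserves the quasi-isomorphism $Z(U_{\bullet,+}) \ra Z(X_+)$ that holds after sheafification by Verdier's descent theorem (whose equivariant form is precisely Proposition~\ref{coh.equiv.hypercover}); the associated total complex therefore computes $Ext^*_G(Z(X_+ \wedge S^s \wedge Y), aP)$. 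The final passage from $I^{\bullet}$ to $P$ on the right-hand side is accomplished by the same refinement argument that underlies the second isomorphism of Proposition~\ref{coh.equiv.hypercover}: over hypercoverings, every section of $aP$ lifts to $P$ on some refinement, and every identity that holds in $aP$ becomes an identity in $P$ on some further refinement.
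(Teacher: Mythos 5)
Your approach is essentially the paper's: introduce the auxiliary additive presheaf $\bar P$ (your $\tilde P$) with $\Gamma(U,\bar P)=Hom(Z(U_+\wedge S^s\wedge Y),P)$, check additivity, apply Proposition~\ref{coh.equiv.hypercover}, and recognize the resulting $H^*_?(X,a\bar P)$ as the left-hand $Ext$ via tensor--Hom adjunction. Where you differ is in how you negotiate that last identification. The paper handles it with three stated observations about the functor $P\mapsto\bar P$: it is exact (so $\underline{Hom}(Z(S^s\wedge Y),-)$ needs no derived correction), it preserves additivity (so the proposition applies to $\bar P$), and it commutes with $a$ (so $a\bar P=\underline{Hom}(Z(S^s\wedge Y),aP)$, after which ordinary Hom--tensor adjunction yields $H^*_?(X,a\bar P)\cong Ext^*(Z(X_+\wedge S^s\wedge Y),aP)$). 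Your worry that $a$ may fail to commute with internal $Hom$ is exactly the third observation, and you are right to flag it as the crux; but the direct check --- in each simplicial degree $Z(S^s\wedge Y)$ is a finite direct sum of copies of the representable $Z(Y)$, and $\underline{Hom}(Z(Y),-)=(-)(-\times Y)$ commutes with sheafification --- is simpler and closer to what is actually needed than the double-complex detour. That detour also quietly upgrades Proposition~\ref{coh.equiv.hypercover} to Verdier's full hypercover descent (a quasi-isomorphism $aZ(U_{\bullet,+})\to aZ(X_+)$ for a single hypercover, which is strictly stronger than the colimit statement you are handed), and leaves its final passage from the injective resolution $I^{\bullet}$ back to $P$ informal. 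I would drop the double-complex implementation and instead prove the three stated properties of $P\mapsto\bar P$ directly, as the paper does.
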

\begin{proof} One defines a new abelian presheaf $\bar P$  by $\Gamma (U, \bar P) = Hom(Z(U_+ \wedge S^s \wedge Y), P)=
 Hom(Z(U_+) \otimes Z(S^s) \otimes Z(Y), P)$. (The identification
$Z(U_+ \wedge S^s \wedge Y) = Z(U_+) \otimes Z(S^s) \otimes Z(Y)$ is clear, see \cite[p. 6]{Wei} for example.)
Then one may observe that the functor $P \ra \bar P$ is
exact, sends an additive abelian presheaf to an additive abelian presheaf and commutes with the functor $a$. Therefore, the conclusion
 follows by applying the last proposition to the abelian presheaf $\bar P$.
\end{proof}
\begin{remark}
As shown in \cite[Theorem 1.3]{DHI}, one way to consider simplicial presheaves that are {\it simplicial 
sheaves up to homotopy} is to consider a localization of the category of simplicial presheaves by
inverting hypercoverings. Recall a simplicial presheaf $P$ on a site has {\it descent} if for every object $X$ in the site
and every hypercovering $U_{\bullet} \ra X$ in the site, the obvious augmentation 
\[P(X) \ra \holim\{P(U_n)|n\}\]
is a weak-equivalence. 
\end{remark}
\subsection{\bf Comparison of descent properties of simplicial presheaves}
Assume the equivariant framework considered in Definition ~\ref{equiv.tops}. One clearly has a map of sites $\epsilon: \bS \ra \bS^{\group}$
where the corresponding underlying functor sends a covering in $\bS^{\group}$ to the same object, but viewed as a covering in $\bS$. 
This defines a pushforward: 
\[\epsilon_*: \PSh^{\group}(\bS_?) \ra \PSh^{\group}(\bS^{\group}).\]
\begin{proposition} If $P \eps \PSh^{\group}(\bS?)$ is objectwise fibrant and  has descent on the site $\bS_?$, then $\epsilon_*(P)$ has descent on the site
 $\bS^{\group}_?$. 
\end{proposition}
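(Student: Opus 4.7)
The plan is to reduce the descent property of $\epsilon_*(P)$ on $\bS^{\group}_?$ to the descent property of $P$ on $\bS_?$ by transporting hypercoverings along the underlying forgetful functor $u:\bS^{\group} \ra \bS$ corresponding to the map of sites $\epsilon$. Indeed, unravelling the formula $\epsilon_*(P) = P \circ \epsilon^{-1}$ recalled earlier in the excerpt, one has $\epsilon_*(P)(Y) = P(u(Y))$ for every $Y \eps \bS^{\group}$. Consequently, for any simplicial object $V_{\bullet} \ra Y$ in $\bS^{\group}$, the augmentation
\[
\epsilon_*(P)(Y) \ra \holim \{ \epsilon_*(P)(V_n) \mid n \}
\]
\emph{is literally} the augmentation $P(u(Y)) \ra \holim \{ P(u(V_n)) \mid n \}$ associated to $u(V_{\bullet}) \ra u(Y)$ in $\bS$.

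The key step is therefore to verify that $u$ carries hypercoverings in $\bS^{\group}_?$ to hypercoverings in $\bS_?$. Two observations suffice. First, by the definition of the equivariant topology recorded in Definition~\ref{equiv.tops}, every covering in $\bS^{\group}_?$ is, after forgetting the $\group$-action, a covering in $\bS_?$. Second, $u$ preserves finite limits: the category $\bS^{\group}$ of $\group$-objects in $\bS$ inherits its finite limits from $\bS$ with the naturally induced $\group$-action, so the matching (coskeleton) objects $(\cos_n V)_{n+1}$ entering the recursive definition of a hypercovering are formed identically in $\bS^{\group}$ and in $\bS$. Combining these two facts, the defining conditions that $V_0 \ra Y$ and each $V_{n+1} \ra (\cos_n V)_{n+1}$ be coverings in $\bS^{\group}_?$ yield, under $u$, the analogous conditions in $\bS_?$, so $u(V_{\bullet}) \ra u(Y)$ is indeed a hypercovering in $\bS_?$.

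With this in hand, the descent hypothesis on $P$ immediately produces that the displayed augmentation is a weak-equivalence of (pointed) simplicial sets, and by the identification above this is precisely the descent augmentation for $\epsilon_*(P)$ on $V_{\bullet} \ra Y$. The objectwise fibrancy hypothesis on $P$ is used precisely so that each $\epsilon_*(P)(V_n) = P(u(V_n))$ is a fibrant simplicial set and the homotopy limit has its correct homotopy type, i.e.\ computes the descent object intended in the definition. Observe finally that no delicate verification involving the $\group$-action on presheaves is required, since descent is a condition on the underlying (pointed) simplicial sets. The main obstacle, modest as it is, is the transfer of hypercoverings between the two sites; this is essentially formal once one notes that $u$ is both limit-preserving and, by construction of the equivariant topology, covering-preserving, with stability under the pullbacks used in forming coskeleta inherited from the corresponding stability in $\bS_?$.
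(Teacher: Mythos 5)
Your proof is correct and takes essentially the same approach as the paper, which simply observes that every hypercovering of $X \eps \bS^{\group}_?$ is (upon forgetting the $\group$-action) a hypercovering of $X$ in $\bS_?$ and cites \cite[Theorem 1.3]{DHI}. You supply the underlying verification that the forgetful functor $u$ preserves hypercoverings because it preserves finite limits and, by definition of the equivariant topology, coverings; this is exactly what the paper's one-line argument tacitly assumes.
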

\begin{proof} This is clear since every hypercovering of any object $X \eps \bS^{\group}_?$ is a hypercovering of the same 
 object $X$ viewed as an object of $\bS_?$. Now one invokes \cite[Theorem 1.3]{DHI}.
\end{proof}
\begin{example} As a typical example of the last proposition, one may let $\bS$ denote the Nisnevich site of all smooth schemes
 over a given base scheme $\B$ and let $\bS^{\group}$ denote the corresponding equivariant Nisnevich site, or the isovariant Nisnevich
site. Then, the last proposition shows that any equivariant simplicial presheaf on the Nisnevich site has descent on restriction to
the last two sites. Nevertheless, this example does not apply to important equivariant simplicial presheaves like those defining
equivariant G-theory or K-theory which are only defined on the equivariant or isovariant site.
\end{example}
\subsection{Sheaves with transfer and correspondences}
 Next let $\group$ denote a profinite group. We let $(\bS, \group)$ denote category whose
objects are smooth schemes of finite type over $S$ provided with an action by some finite quotient of $\group$. The morphisms in this
category will be $\group$-equivariant maps. For each topology $?$, we let the coverings of a scheme $X \eps (\bS, \group)$ be given by
surjective $\group$-equivariant maps $U \ra X$ in $(\bS, \group)$ so that $U \ra X$ is in the given topology on forgetting the $\group$-action. Given two smooth schemes
$X$, $Y$ in $(\bS, \group)$, let
\be \begin{equation*}
Cor_{\group}(X, Y) = \{Z \subseteq X \times Y\mid \mbox{closed, integral so that
the projection $Z \ra X$ is finite}\}. 
\end{equation*} \ee
Observe that since $X$ and $Y$ are assumed to have actions through some  finite quotient of 
$\group$, there is natural induced action of $\group$ on $Cor_{\group}(X, Y)$: hence the
presence of the subscript $\group$. One may also define the category of $\group$-equivariant 
correspondences on $\B$, by letting the objects be the smooth schemes
 in $({\rm {Sm_{\B}}, \group})$ and where morphisms are elements of $Cor_{\group}(X,
Y)$. This category will be denoted ${\mathbf {Cor}}_{\group}$.  An abelian
$\group$-equivariant presheaf with transfers is a contravariant functor 
${\mathbf {Cor}}_{\group} \ra (abelian \quad groups, \group)$, where the category on the
right consists of abelian groups
with a continuous action by $\group$, continuous in the sense that the stabilizers
are all subgroups with
finite quotients.
\vskip .3cm
An {\it abelian  $\group$-equivariant  sheaf with transfers} is an abelian
$\group$-equivariant presheaf with transfers which is a sheaf. This is a sheaf on the site $({\rm
{Sm_{\B}}}, \group)_{Nis}$ with the extra property of having transfer maps for finite $\group$-equivariant correspondences. 
This category will be denoted by 
${\rm {\bf ASh}}_{tr}(\B, \group)$.
\vskip .3cm
One obtains an imbedding of the category $(\bS, \group)$ into ${\mathbf 
{Cor}}_{\group}$ by sending a scheme to itself
and a $\group$-equivariant map of schemes $f: X \ra Y$ to its graph $\Gamma _f$. Given a scheme $X$
in $(\bS, \group)$, ${\mathbb Z}_{tr, \group}(X)$ will denote the sheaf with
transfers defined by $\Gamma (U, {\mathbb Z}_{tr,\group}(X)) = Cor_{\group}(U, X)$.
One extends this to define the {\it $\group$-equivariant motive of $X$} as the
complex associated to the simplicial abelian sheaf $n \mapsto Cor_{\group}( -
\times \Delta [n], X)$, where $-$ takes any object in the big Nisnevich site on $\bS$ as
argument.
We will denote this by ${\mathcal M}_{\group}(X)$.
\vskip .3cm
It is important to realize that ${\mathcal M}_{\group}(X)$ is a complex of {\it
sheaves} on the site $({\rm {Sm_{\B}}}, \group)_{Nis}$. The global sections of this
complex, i.e. sections over $ \B$ will be denoted $M_{\group}(X)$.
\vskip .3cm 
\subsubsection{\bf A key property}
\label{univ.sh.tr.0}
Let $F$ denote an abelian $\group$-equivariant sheaf with transfers. Then 
\newline \noindent
$\Gamma (X, F) \cong Hom_{{\rm {\bf ASh}}_{tr}(\S, \group)}(\M_{\group}(X), F)$. 
\vskip .3cm
\subsection{\bf The motivic framework}
So far we tried to present the unstable theory in as broad a setting as possible. Now we point out how to specialize this to the
motivic framework. The category $\bS$ and the topology $?$ on it could be any one of the following:
\vskip .3cm
a) $\bS$ denotes the category of all smooth schemes of finite type over a fixed base scheme $\B$, with the topology $?$ denoting
either the Zariski, Nisnevich, or \'etale topologies. The interval $\bI$ will be the affine line ${\mathbb A}^1$ over the base-scheme.
The resulting category of pointed simplicial presheaves will be denoted $\PSh_*^{\group}(\Sm_?)_{{\mathbb A}^1}$, where $\group$ denotes
a presheaf of groups on $\Sm$.
\vskip .3cm
$a)_{eq}$ Assume that $\group$ denotes a presheaf of groups on $\bS$ as above. Then $\bS$ may be replaced by $\bS^{\group}$ provided
with a corresponding equivariant topology as in Definition ~\ref{equiv.tops}. The interval $\bI$ will again be ${\mathbb A}^1$.
The resulting category of pointed simplicial presheaves will be denoted $\PSh_*^{\group}(\Sm_?^{\group})_{{\mathbb A}^1}$. 
\vskip .3cm
b) The above simplicial presheaves are pointed in the usual sense. However, if $S$ denotes a fixed object of $\bS$ with trivial action by $\group$
, one may also consider
the categories of simplicial presheaves that are pointed by $S$: these will be denoted 
$\PSh^{\group}/S(\Sm_?^{\group})_{{\mathbb A}^1}$ and $\PSh^{\group}/S(\Sm_?^{\group})_{{\mathbb A}^1}$.
\vskip .3cm
c) One may replace the category $\Sm$ with the category of all schemes of finite type over the base scheme: in this case
one would also replace the topology by the {\it cdh} topology. The corresponding categories of simplicial presheaves will be denoted
$\PSh^{\group}(\Sch_{cdh})_{*,{\mathbb A}^1}$, $\PSh_*^{\group}(\Sch_{cdh}^{\group})_{*,{\mathbb A}^1}$,
$\PSh^{\group}/S(\Sch_{cdh})_{{\mathbb A}^1}$ and $\PSh^{\group}/S(\Sch_{cdh}^{\group})_{{\mathbb A}^1}$.
\vskip .3cm
When there is no chance for confusion, any of these categories of simplicial presheaves pointed by $*$ (by $S$) will be denoted
$\PSh^{\group}_{*,mot}$ ($\PSh^{\group}/S_{mot}$, \res).
\subsection{The coarse model structure on $\PSh_{*, mot}^{\group}$.}
\label{coarse.model}
We end this section with a brief discussion of a {\it coarse model structure} on $\PSh_{*,mot}^{\group}$. Let $U: \PSh_{*, mot}^{\group} \ra \PSh_{*,mot}$
denote the functor forgetting the group action. Then we  define the coarse model structure on $\PSh_{*, mot}^{\group}$ by letting
 the fibrations (weak-equivalences) in the {\it coarse model structure} be those maps
$f: A \ra B$ so that $U(f)$ is a fibration (weak-equivalence, \res) in $\PSh_{*, mot}$. The cofibrations will be defined by the lifting property 
with respect to trivial fibrations. The underlying functor $U$ has a left-adjoint, namely the free functor ${\mathcal F}: \PSh_{*, mot} \ra \PSh_{*, mot}^{\group}$
that sends a pointed simplicial presheaf $P$ to $ U \ra \group(U) \wedge P(U)$, where $\group(U)$ is pointed by the identity element of $\group$. The functors
$U$ and ${\mathcal F}$ define a triple which provides a cofibrant replacement of any simplicial presheaf $P$. (In fact one may identify this
with $E\group \wedge P$.)
 \vfill \eject
\section{\bf The basic framework of equivariant  motivic homotopy
theory: the stable theory}
The theory below is a variation of the theory of enriched functors and spectra as in \cite{Dund1}, 
\cite{Dund2} and also \cite{Hov-3}  modified so as to handle equivariant spectra for the action of a group $\group$ where
 $\group$ denotes a group as before.
\subsection{\bf Enriched functors and spectra}
\label{en_fun} Let $\C$ denote a symmetric monoidal cofibrantly generated model category: 
recall this means $\C$ is a cofibrantly generated model category, which also has the additional
structure of a symmetric monoidal category,  compatible with the model structure as in 
\cite{Hov-2}. We will assume that $\C$ is pointed, i.e. the initial object
 identifies with the terminal object, which will be denoted $*$ in general. The monoidal product will be 
denoted $\wedge$;
the unit of the monoidal structure will be denoted $S^0$ and this is assumed to be 
cofibrant. 
\begin{remark}
In the relative case, where for example, $\C= \PSh/S^{\group, c}$, $*=S$ and $S^0=S_+ = S \sqcup S$.
\end{remark}
We will let $\oI$ ($\oJ$) denote the generating cofibrations (generating trivial 
cofibrations, \res). We will make the following additional assumptions almost always. (The only exception to this is
when we discuss symmetric spectra (see Examples ~\ref{main.eg}(iv)), where the hypothesis (ii) will not be required.):
\be \begin{enumerate}[ \rm (i)]
     \item{The model structure on $\C$ is {\it left-proper}. We will assume the model structure 
 is {\it combinatorial (i.e. locally presentable)} (which together with the conditions in (ii) imply it is in fact {\it tractable}). 
We do not always require $\C$ to be  cellular. In practice the cellularity hypothesis means the domains and
co-domains of both $\oI$ and $\oJ$ are small with respect to $\oI$ and that the cofibrations are effective 
monomorphisms.  }
\item{We will further assume that all monomorphisms in $\C$ are cofibrations. It follows that all the objects of $\C$, and hence in particular, 
the domains and co-domains of the maps in $\oI$ are cofibrant. We will also assume that the cofibrations in $\C$ are stable under products.}
\item {We will also assume that the weak-equivalences in $\C$ are stable under filtered colimits.}
\item{Though this assumption is not strictly necessary, we will further assume that 
$\C$ is simplicial or at least pseudo-simplicial so that there is a bi-functor 
$Map: \C^{op} \times \C \ra (pointed. simpl. sets)$.
We will also assume that there exists an internal Hom functor $\Hom_{\C}: \C ^{op} \times \C \ra \C$,
so that for any finitely presented object $C \eps \C$, one obtains:
\be \begin{equation}
     \label{Omega.0}
Map (C, \Hom_{\C}(K, M)) \cong Map(C \wedge K, M)
\end{equation} \ee
(In fact if $Map (K, N)_n = Hom_{\C} (K \otimes \Delta [n], N)$ where $K \otimes \Delta [n]$ denotes
an object in $\C$ defined using a pairing $\otimes: \C \times (simpl. sets) \ra \C$, then it suffices
to assume that $(K \otimes \Delta [n])_k$ is finitely presented for all $k \ge 0$ and that the
analogue of the isomorphism in ~\eqref{Omega.0} holds with $Map$ replaced by $Hom_{\C}$.)}
\item{Let $\C'$ denote a small symmetric monoidal $\C$-enriched sub-category of $\C$, which may not be a full enriched sub-category. (In particular, 
this implies that the objects of $\C'$ are all objects in $\C$, that the monoidal product on $\C'$
 is the restriction of the monoidal product  $\wedge$ on $\C$ and that the unit in $\C'$ is the same
as the unit $S^0$ of $\C$. But for two objects $A, B \eps \C'$, $\Hom_{\C'}(A, B) $ will in general be distinct from $\Hom_{\C}(A, B)$.)
Let $\C_0'$ denote a $\C$-enriched  sub-category of  $\C'$, which may or may not be a full enriched sub-category, but closed under the 
monoidal product $\wedge$ and containing the unit $S^0$. We will further assume that all the objects in $\C_0'$ are {\it finitely presented} in 
$\C$ and are {\it cofibrant} as objects in $\C$.
(In \cite[section 4]{Dund1}, there are a series
of additional hypotheses put on the category $\C'$ to obtain stronger conclusions. These are not needed
for the basic conclusions as in \cite[Theorem 4.2]{Dund1} and therefore, we skip them for the time 
being.) In view of
the motivating example considered below, we will denote the objects of $\C_0'$ as $\{T_V\}$. }
\end{enumerate} \ee
\vskip .3cm
\begin{remark}
 The hypothesis in (ii) that all the objects of $\C$ are cofibrant is used only in ensuring that the internal hom $\Hom_{\C}$ between
any two objects of $\C'_0$ (assuming this is a full subcategory of $\C$) is always cofibrant. However, this means that we are forced to restrict to the diagram categories with the
object-wise model structures as in ~\ref{objectwise.diagram.model.structs} as candidates for the model category $\C$. We hope
to consider at a future date constructions of equivariant motivic stable homotopy using some of the other unstable model categories 
considered there.
\vskip .3cm
When hypothesis (ii) is not assumed and $\C_0'$ is not assumed to be a full sub-category of $\C$, $\Hom_{\C_0'}$ will be defined 
in such a manner so that $\Hom_{\C_0'}$ between any two objects of $\C_0'$ is cofibrant. See for example, the case of pre-spectra (see 
Definition ~\ref{basic.defs}(ii)) or symmetric spectra (see Examples ~\ref{main.eg}(iv).) (Therefore, when the hypothesis (ii) is not assumed
we may also use the projective model structures on $\PSh_*$ ($\PSh/S$).)
\end{remark}
\vskip .3cm \indent
Observe that $T_W \cong \Hom(S^0, T_W)$ and that $S^0 \ra \Hom_{\C_0'}(T_V, T_V)$. Now the pairing 
$\Hom_{\C_0'}(T_V, T_V) \wedge 
\Hom_{\C_0'}(S^0, T_W ) \ra \Hom_{\C_0'}(T_V, T_V \wedge T_W)$, pre-composed with
$S^0 \wedge \Hom_{\C_0'}(S^0, T_W) \ra \Hom_{\C_0'}(T_V, T_V) \wedge \Hom_{\C_0'}(S^0, T_W)$,
sends $T_W \cong \Hom_{\C_0'}(S^0, T_W)$  to a sub-object of $\Hom_{\C_0'}(T_V, T_V \wedge T_W)$. 
\begin{definitions} 
\label{basic.defs}
(i) $[\C_0', \C]$ will denote the category whose objects are $\C$-{\bf enriched covariant functors
 from $ \C_0'$ to $\C$}: see \cite[2.2]{Dund1}. 
 An  enriched functor ${\mathcal X}$ sends 
$\Hom_{\C_0'}(T_V, T_W) \mapsto \Hom_{\C}({\mathcal X}(T_V), {\mathcal X}(T_W))$ for objects $T_V$ and 
$T_W \eps \C_0'$, with 
this map being 
functorial in $T_V$ and $T_W$. 
\vskip .3cm
The 
adjunction between $\wedge $ and $\Hom_{\C}$ shows that in this case,
one is provided with a compatible family of pairings $\Hom_{\C_0'}(T_V, T_W) \wedge {\mathcal X}(T_V) \ra 
{\mathcal X}(T_W)$.
On replacing $T_W$ with $T_V \wedge T_W$ and pre-composing the above pairing with 
$S^0 \wedge \Hom_{\C_0'}(S^0, T_W) \ra \Hom_{\C_0'}(T_V, T_V) \wedge \Hom_{\C_0'}(S^0, T_W) \ra
\Hom_{\C_0'}(T_V, T_V \wedge T_W)$, one sees that, an enriched functor ${\mathcal X}$ comes provided with 
the structure maps 
$T_{W} \wedge {\mathcal X}(T_V) \ra {\mathcal X}(T_V \wedge T_W)$ that are compatible as $T_W$ and $T_V$ vary in
$\C_0'$.) 
\vskip .3cm
A {\it morphism} $\phi: {\mathcal X}' \ra {\mathcal X}$ between two such enriched functors  is given by a 
$\C$-natural transformation, which means that one is provided with a 
compatible collection of maps $\{{\mathcal X}'(T_V) \ra {\mathcal X}(T_V) \mid T_V \eps \C_0'\}$,
compatible with the pairings $\Hom_{\C_0'}(T_V, T_W) \wedge {\mathcal X}(T_V) \ra X(T_W)$ and 
$\Hom_{\C_0'}(T_V, T_W) \wedge {\mathcal X}'(T_V) \ra {\mathcal X}'(T_W)$. We call such enriched functors {\bf spectra} (or
{\bf spectra} with values in $\C$). This category will be denoted {\bf $Spectra (\C)$} or {\bf $Spectra(\C_0', \C)$}
 if one wants to emphasize
the role of the subcategory $\C'_0$. The obvious {\it inclusion} functor $\C_0' \ra \C$ will be denoted $\Sigma$ for reasons
that will become clear once the smash product of spectra is defined.
\vskip .3cm
(ii) ${\bf Presp (\C)}$. We let ${\rm Sph}(\C_0')$ denote the $\C$-category defined by taking the 
objects to be the same as the objects of $\C_0'$ and where $\Hom_{Sph(\C_0')}(T_U, T_V) = T_W$ if 
$T_V= T_W \wedge T_U$ and $*$ otherwise. Since $T_W $ maps naturally to  $\Hom_{\C_0'}(T_U, T_V)$, if $T_V=T_W \wedge T_U$, it 
follows that there is an obvious functor from the category ${\rm Sph}(\C_0')$ to the category $\C_0'$. Now an enriched functor in 
$[{\rm Sph}(\C_0'), \C]$
is simply given by a collection $\{X(T_V)|T_V \eps {\rm Sph}(\C_0')\}$ provided with a compatible collection of 
maps $T_W \wedge X(T_V) \ra X(T_W \wedge T_V)$. We let $Presp (\C) =[{\rm Sph}(\C_0'), \C]$. i.e. Such enriched
functors will be called {\bf pre-spectra}. {\it Caution}: the terminology here and in (i) are non-standard. For us, it is
the category of spectra that will be important and the category of pre-spectra merely plays a background role.
\vskip .3cm
(iii) For each fixed $T_V \eps \C_0'$, we associate the (enriched) {\it free $\C$-functor} ${\mathcal F}_{T_V}: \C \ra [\C_0', \C]$
 defined by ${\mathcal F}_{T_V}({ X}) = { X} \wedge \Hom_{\C_0'}(T_V, \quad )$ and the 
free $\C$-functor $F_{T_V}: \C \ra Presp (\C)$
that sends each $X \eps \C$ to the pre-spectrum $F_{T_V}(X)$ defined by 
\be \begin{align}
\label{sus.spectrum}
F_{T_V}(X)(T_W) &= X \wedge T_U, \mbox{ if }  {T_W=T_U \wedge T_V} \mbox{ and }\\
&=* \mbox{ otherwise} \notag \end{align} \ee
\vskip .3cm
(iv) For each $T_V \eps \C_0'$, we let $\Omega_{T_V}: \C \ra \C$ denote the functor defined
by $\Omega_{T_V}(X) = \Hom_{\C}(T_V, X)$.
\vskip .3cm
(v) For each $T_V \eps \C_0'$, we let ${\mathcal R}_{T_V}: \C \ra [\C_0', \C]$ denote  the $\C$-enriched functor
${\mathcal R}_{T_V}(P)$ defined by ${\mathcal R}_{T_V}(P) (T_W) = \Hom_{\C}(\Hom_{\C_0'}(T_W, T_V), P)$. Similarly 
we let  $R_{T_V}:\C \ra Presp (\C)$ denote the functor 
defined by $R_{T_V}(P)(T_W) = \Hom_{\C}(T_{U}, P)$ if $T_U \wedge T_W = T_V$ and $S^0$ otherwise.
\vskip .3cm
Let ${\mathcal {E}}val_{T_V}: Spectra(\C)=[\C_0', \C] \ra \C$ denote the $\C$-enriched functor sending
${\mathcal X} \mapsto {\mathcal X}(T_V)$. Similarly, let $Eval_{T_V}: Presp (\C) =[Sph(\C'_0), \C] \ra \C$ denote
 the $\C$-enriched functor that sends
a pre-spectrum ${\mathcal X} \eps Presp (\C)$ to ${\mathcal X}(T_V)$.
\end{definitions}
\vskip .3cm
\begin{lemma}
 Now ${\mathcal R}_{T_V}$ is right-adjoint to ${\mathcal E}val_{T_V}$ while 
${\mathcal F}_{T_V}$ is left-adjoint to ${\mathcal E}val_{T_V}$. Similarly,
 ${ R}_{T_V}$ is right-adjoint to ${ E}val_{T_V}$ while 
${ F}_{T_V}$ is left-adjoint to ${ E}val_{T_V}$.
\end{lemma}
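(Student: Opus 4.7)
The plan is to verify each of the four adjunctions by a direct application of the enriched Yoneda principle, using the internal hom--smash adjunction from ~\eqref{Omega.0}. I will write out the argument for ${\mathcal F}_{T_V} \dashv {\mathcal E}val_{T_V}$ and ${\mathcal E}val_{T_V} \dashv {\mathcal R}_{T_V}$ in detail; the corresponding statements for pre-spectra then follow by restricting all constructions from $\C_0'$ to the subcategory ${\rm Sph}(\C_0')$, where the same formalism applies.

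For the adjunction ${\mathcal F}_{T_V} \dashv {\mathcal E}val_{T_V}$: given $X \eps \C$ and ${\mathcal Y} \eps [\C_0', \C]$, a $\C$-natural transformation $\phi: {\mathcal F}_{T_V}(X) \ra {\mathcal Y}$ is by definition a compatible family of maps $\phi(T_W): X \wedge \Hom_{\C_0'}(T_V, T_W) \ra {\mathcal Y}(T_W)$ in $\C$, as $T_W$ varies in $\C_0'$. Precomposing $\phi(T_V)$ with $X \wedge (S^0 \ra \Hom_{\C_0'}(T_V, T_V))$ and using $X \wedge S^0 \cong X$ produces a map $\tilde \phi : X \ra {\mathcal Y}(T_V)$. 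Conversely, given $f: X \ra {\mathcal Y}(T_V)$, one defines $\phi(T_W)$ as the composite $X \wedge \Hom_{\C_0'}(T_V, T_W) \ra {\mathcal Y}(T_V) \wedge \Hom_{\C_0'}(T_V, T_W) \ra {\mathcal Y}(T_W)$, where the second map is the enriched action of $\Hom_{\C_0'}$ on ${\mathcal Y}$. The two assignments are mutually inverse by the unit/associativity axioms of enriched functors, which is exactly the content of the enriched Yoneda lemma.

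For the adjunction ${\mathcal E}val_{T_V} \dashv {\mathcal R}_{T_V}$: given $P \eps \C$ and ${\mathcal X} \eps [\C_0', \C]$, a $\C$-natural transformation $\psi: {\mathcal X} \ra {\mathcal R}_{T_V}(P)$ consists of a compatible family of maps $\psi(T_W): {\mathcal X}(T_W) \ra \Hom_{\C}(\Hom_{\C_0'}(T_W, T_V), P)$. Since the objects $\Hom_{\C_0'}(T_W, T_V)$ are built from objects of $\C_0'$, which are finitely presented in $\C$ by hypothesis (v), the isomorphism ~\eqref{Omega.0} applies and these maps correspond bijectively to maps $\Hom_{\C_0'}(T_W, T_V) \wedge {\mathcal X}(T_W) \ra P$, natural in $T_W$. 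Setting $T_W = T_V$ and precomposing with $S^0 \ra \Hom_{\C_0'}(T_V, T_V)$ yields a map $\tilde \psi: {\mathcal X}(T_V) \ra P$. Conversely, given $g: {\mathcal X}(T_V) \ra P$, one defines the associated family by using the enriched functoriality of ${\mathcal X}$ to send $\Hom_{\C_0'}(T_W, T_V) \wedge {\mathcal X}(T_W) \ra {\mathcal X}(T_V) \ra P$ and then transposing under ~\eqref{Omega.0}. Naturality in both $X$ (respectively $P$) and ${\mathcal Y}$ (respectively ${\mathcal X}$) is immediate from the construction.

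The main subtlety, and the only place requiring care, is the use of the internal-hom adjunction ~\eqref{Omega.0} in the second argument above: this is why the finite presentability hypothesis on the objects of $\C_0'$ is indispensable in the setup of ~\ref{en_fun}(v). The statements for $F_{T_V}$, $Eval_{T_V}$ and $R_{T_V}$ on $Presp(\C)$ are proved by the identical argument, using that $\Hom_{{\rm Sph}(\C_0')}(T_U, T_V)$ is either $T_W$ (for $T_V = T_W \wedge T_U$) or $*$, so the integrals defining the natural transformations reduce to a single component.
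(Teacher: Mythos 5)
The paper states this lemma without proof (treating it as a standard fact about free and cofree constructions on categories of enriched functors), so there is nothing in the paper to compare your argument against. Your argument is substantively correct: both adjunctions $\mathcal{F}_{T_V} \dashv \mathcal{E}val_{T_V}$ and $\mathcal{E}val_{T_V} \dashv \mathcal{R}_{T_V}$ are instances of the enriched Yoneda lemma (resp.\ its dual), and the pre-spectrum case is handled correctly by specializing the indexing category to $\mathrm{Sph}(\C_0')$.

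One inaccuracy worth correcting is the final paragraph, where you claim that the bijection between maps $\mathcal{X}(T_W) \ra \Hom_{\C}(\Hom_{\C_0'}(T_W, T_V), P)$ and maps $\Hom_{\C_0'}(T_W, T_V) \wedge \mathcal{X}(T_W) \ra P$ is an instance of~\eqref{Omega.0} and that this is ``why the finite presentability hypothesis on the objects of $\C_0'$ is indispensable.'' That is not the adjunction being used. The bijection you need is the ordinary currying adjunction $\Hom_{\C}(A \wedge B, P) \cong \Hom_{\C}(A, \Hom_{\C}(B, P))$, which is part of $\C$ being closed symmetric monoidal and holds for all $A$, $B$, $P$ with no finiteness condition -- and indeed $A = \mathcal{X}(T_W)$ will generally not be finitely presented. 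Equation~\eqref{Omega.0} is a different assertion: it relates the \emph{simplicial} enrichment $Map$ to the internal $\Hom_{\C}$, and it is there that the finite presentability of $C$ is invoked, because $\C$ is only assumed pseudo-simplicial. The finite presentability hypothesis on $\C_0'$ is used elsewhere in the stable theory (e.g.\ to get that evaluation and the free functors interact well with filtered colimits and to obtain weak finite generation of the spectrum-level model structures), not in establishing this adjunction.
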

\begin{example}
 \label{main.eg}
The {\it main examples} of the above are the following. Let $\bS$ denote a category of schemes (with the terminal object denoted $\B$) 
as in section 2. Let $\C$ denote one of the categories
${\rm {PSh}}_*^{{\mathcal O}_{\group}^o}$, ${\rm {PSh}}/S^{{\mathcal O}_{\group}^o}$ or 
the corresponding ${\mathbb A}^1$-localized categories of 
simplicial sheaves up to homotopy, all provided with the object-wise model structures obtained by starting with 
the object-wise model structures on $\PSh_*$ and $\PSh/S$ in all but the last example. In the last example, one may also start with the
projective model structure.
 One has the following choices for the category $\C'=\C_0'$: in all, but the example considered in (iv), $\C'$ will be
assumed to be a full sub-category of $\C$. 
\vskip .3cm
Just for the following discussion, we will adopt the
following convention (mainly to simplify the discussion): while considering the categories $\PSh_*$ and $\PSh_*^{{\mathcal O}_{\group}^o}$, 
we will let $\S= \B$. While considering the categories $\PSh/S$ and $\PSh/S^{{\mathcal O}_{\group}^o}$, $\S$ will denote $S$.
\vskip .3cm
(i) Let $V$ denote an affine space over $\S$. We let $Cone( u)$ denote the simplicial mapping cone  of $u$, where $u:V-0 \ra V$ is the obvious map.
We let $T_V $ denote any finitely presented object in $\PSh_*$ ($\PSh/S$) for which there is a trivial cofibration $Cone(u) \ra T_V$. Observe as
a consequence that $T_V$ is a cofibrant object of $\PSh_*$ ($\PSh/S$).
In case $V$ is zero dimensional, i.e. $V= \S$, then we use the convention that $T_V=\S_{+_{\S}} = \S \sqcup \S$.
We let $(Thm.sps)$ denote the full subcategory of $\PSh_*$ (or $\PSh/S$) whose objects are $\{T_V\}$. Then we let $\C'_0=\C'$ denote
the category $(Thm.sps)^{{\mathcal O}_{\group}^o}$ of diagrams with values in  $(Thm.sps)$ and indexed by
${{\mathcal O}_{\group}^o}$. This is the {\it canonical choice} of $\C'_0=\C'$. An object in the above category may then 
be denoted $\{T_{V}(\group/H)|H \eps \W\}$: we will abbreviate this to just $T_V$. Observe that $\C'$ now contains the 
$0$-dimensional sphere $S^0 =\S_{+_{\S}}$. 
\vskip .3cm
(i)' As an approximation to the above, one may also define the categories $\C'_0=\C'$ as follows.
Let $V$ denote an affine space over $\S$ 
and provided with a linear action by $\group$ so that $V^H = V$ for some $H \eps \W$.
We let $Cone( u)$ denote the simplicial mapping cone  of $u$, where $u:V-0 \ra V$ is the obvious map.
We let $T_V $ denote any finitely presented object in $\PSh_*$ ($\PSh/S$) for which there is a trivial cofibration $Cone(u) \ra T_V$.
In case $V$ is zero dimensional, i.e. $V= \S$, then we use the convention that $T_V=\S_{+_{\S}} = \S \sqcup \S$.
\vskip .3cm
We let $\C'_0=\C'$ denote the collection $\{T_V| V\}$ as one varies over all such (i.e. continuous) representations of $\group$. 
(By sending $\{T_V|V\}$ to $\{T_{V^H}|H \eps \W, V\}$ one obtains an imbedding of the current category into the one considered in (i).)
 Observe that $\C'$ now contains the $0$-dimensional sphere $S^0 =\S_{+_{\S}}$ 
(by taking  $V$ to be the $0$-dimensional vector space) and is closed under smash products.  It is important to observe
that $\Hom_{\C}(T_V, T_W)$ is often larger than the corresponding Hom in the subcategory $Sph(\C_0')$ though the objects
of $\C_0'$ and $Sph(\C_0')$ are the same. The resulting category of spectra will be denoted $Spectra (\C)$. A typical example
is that of symmetric spectra. Here the group $\group$ is trivial, but nevertheless, the sphere $T_V$ is supposed to have
an action by the symmetric group $\Sigma_{dim(V)}$. 
\vskip .3cm
(i)'' A variant of the above framework is the following: let $F: \C \ra \C$ denote a $\C$-enriched functor that 
 is compatible with the 
 monoidal product $\wedge$ (i.e. there is a natural map $F(A) \wedge F(B) \ra F(A \wedge B)$) and with all small colimits. We may now 
let $\C'_0=\C'_F = $ the full subcategory
of $\C$ generated by $\{F(A) |A \eps \C'\}$ together with $S^0$. Examples of such a framework appear elsewhere. The resulting category of spectra will be denoted $Spectra(\C, F)$. 
\vskip .3cm
(i) and (i') seem to be the only general
framework available for $\group$ denoting any one of the three choices above.
\vskip .3cm
(ii) One may also adopt the following alternate definition of $\C'_0=\C'$, when $\group$ denotes either a finite or profinite group.
Let  $\T= {\mathbb P}^1= {\mathbb P}^1_{\S}$. 
Let $K$ denote a  normal subgroup of $\group$ with finite index. 
Let $\T_K$ denote 
the $\wedge$ of $\group/K$ copies of $\T$ 
with $\group/K$ acting by permuting the various factors above. 
We let $\C'$ denote the full sub-category of
$\C$ generated by the objects $\{\T_K| |\group/K| <\infty\} $ under finite applications of $\wedge$ as $K$ is allowed to 
vary subject to the above constraints.  {\it This sub-category of $[\C', \C]$ will be denoted 
$[\C', C]_{\T}$ and the corresponding category of spectra will be denoted 
$Spectra(\C, \T)$.} If we fix the normal subgroup $K$ of $\group$, 
the resulting category of $\T$-spectra will be denoted 
$Spectra (\C,  \group/K, \T)$: here the subcategory $\C'$ will denote the full sub-category of
$\C$ generated by these objects under finite applications of $\wedge$.
\vskip .3cm
More generally, given any object $P \eps \C$ together with an action by  some finite quotient
 group $\group/K$  of $\group$, one may define the categories $[\C', C]_P$ and the category, $Spectra(\C, P)$ of 
{\it $P$-spectra} similarly by replacing
$\T$ above by $P$. (For example, $P$ could be $F(\T)$ for a functor $F$ as 
in (i).)
 In case the base-scheme $\S$ is a field of characteristic $0$, the regular representation of any finite group breaks up into the sum 
of irreducible representations and contains among the summands all  irreducible representations. 
{\it Therefore, in case $\S$ is a field of characteristic  $0$, and the group is finite, there is no loss of generality in adopting this framework to that of
(i). In arbitrary characteristics, and also for general base schemes and general groups, the framework of (i) is clearly more general.}
\vskip .3cm
(iii) One may also let $\T=S^m$ (for some
fixed positive integer $m$) denote the
usual simplicial $m$-sphere. Let $\T_K$ denote the $\wedge$ of $\group/K$ copies of $\T$ 
with $\group/K$ acting by permuting the various factors. We let $\C'=\C'_0$ denote the full sub-category of
$\C$ generated by the objects $\{\T_K| |\group/K| <\infty\} $ under finite applications of $\wedge$ as $K$ is allowed to 
vary subject to the above constraints. In case $K=\group$ and $m=1$, we obtain a spectrum 
in the usual sense and indexed by the non-negative integers. Such spectra will be called 
{\it ordinary spectra}:
when the constituent simplicial presheaves are all simplicial abelian presheaves, such spectra will be
called {\it ordinary abelian group spectra}.
\vskip .3cm
(iv) Let $\T$ denote a fixed object in $\C$. Let $\C'=\C'_0$ denote the symmetric monoidal category
whose objects are iterated powers of $\T$ under the monoidal product in $\C$ along with the unit element of $\C$.
Now one defines the morphisms between two objects $\T^{\wedge m}$ and $\T^{\wedge n}$ exactly as in \cite[2.6]{Dund1}: one may readily see
that this is a $\C$-enriched subcategory of $\C$, but definitely not a full enriched sub-category. The
resulting category of enriched functors $[\C', \C]$ will be equivalent to the category of symmetric spectra
in $\C$ as shown in \cite[Proposition 2.15]{Dund1}. This example will be important for some future applications
when one takes for $\C$ one of the following categories: ${\rm {PSh}}_*^{{\mathcal O}_{\group}^o}$, 
${\rm {PSh}}/S^{{\mathcal O}_{\group}^o}$, $\PSh_*^{\group, c}$, $\PSh/S^{\group, c}$ or 
the corresponding ${\mathbb A}^1$-localized categories of 
simplicial sheaves up to homotopy all provided with one of the  model structures considered earlier. 
Since we no longer have to require that all the objects in the above categories be cofibrant, it is possible to use
the projective model structures on $\PSh_*$ ($\PSh/S$) here.
We do not discuss the details
of this construction in the present paper as it is already discussed amply in \cite{Hov-3} or \cite[2.6]{Dund1}.
\vskip .3cm
A variant of this situation is to replace $\T^{\wedge ^n}$ by $F(\T)^{\wedge ^n}$ where $F: \C \ra \C$ is a monoidal functor as before
and then do the same construction as in the paragraph above.
The weakness of the corresponding stable theory is clear: the spheres which form the suspension coordinates are not indexed
by representations of the group $\group$. In fact, in this setting, the suspension coordinates have no action by $\group$. 
\end{example} 
\begin{definition} {\bf Motivic and \'etale spectra, $\T$-motivic and \'etale spectra}.
\label{mot.spectra}
(i) If $\C= {\rm {PSh}}_{*,{mot}}^{{\mathcal O}_{\group}^o}$  and  $\C_0'$  chosen as 
in Example ~\ref{main.eg}(i), the resulting category of spectra, $Spectra(\C)$, with the stable model structure
 discussed below (see ~\ref{stable.model})  will  be called  {\it motivic spectra} and denoted $\Spt_{{mot}}^{\group}$. 
If instead $\C=  {\rm {PSh}}/S_{mot}^{{\mathcal O}_{\group}^o}$  and  $\C_0'$  chosen as 
in Example ~\ref{main.eg}(i), the resulting category of spectra, will be denoted $\Spt/S_{mot}^{\group}$.
The unit of the corresponding monoidal structure is denoted
$\Sigma_{mot}$. 
\vskip .3cm
If $\C=  {\rm {PSh}}_{*,{des}}^{{\mathcal O}_{\group}^o}$  
with the \'etale topology and  $\C_0'$  chosen as 
in Example ~\ref{main.eg}(i), with the corresponding stable model structure as in ~\ref{stable.model}, the resulting category of spectra, 
$Spectra(\C)$ with 
  the stable model structure  will  be called  {\it \'etale  spectra} and denoted $\Spt_{{et}}^{\group}$. If instead 
$\C= \PSh/S_{des}^{{\mathcal O}_{\group}^o}$, the resulting 
category of spectra will be denoted $\Spt/S_{et}^{\group}$. The unit of the corresponding monoidal structure is denoted
$\Sigma_{et}$. 
\vskip .3cm
In case $F: \C \ra \C$ is a functor
as in Example ~\ref{main.eg} (i)'', $\Spt_{{mot}}^{ \group, F}$ ($\Spt_{{et}}^{\group, F}$) will denote the 
corresponding category, again with the stable model structure. Observe from Example ~\ref{main.eg}(i), that there are several possible choices for the 
sub-categories $\C'_0$. The unit of the corresponding monoidal structure will be denoted $\Sigma_{mot, F}$ 
($\Sigma_{et, F}$, \res.)
\vskip .3cm
(ii)  If $\C= {\rm {PSh}}_{*,{mot}}^{{\mathcal O}_{\group}^o}$ and  $\C= \C_0'$  chosen as 
in Example ~\ref{main.eg}(ii), the resulting category of spectra, $Spectra(\C)$, with the stable model structure
  will  be called  {\it $\T$-motivic spectra} and denoted 
$\Spt_{{mot}}^{\group, \T}$. 
If $\C=  {\rm {PSh}}_{*,des}^{{\mathcal O}_{\group}^o}$ with the \'etale topology and  $\C_0'$  chosen as 
in Example ~\ref{main.eg}(ii), the resulting category of spectra, $Spectra(\C)$ with the stable model structure
  will  be called  {\it $\T$-\'etale  spectra} and denoted 
$\Spt_{{et}}^{\group, \T}$. 
In case $F: \C \ra \C$ is a functor
as in Example ~\ref{main.eg} (i)'', $\Spt_{{mot}}^{\group, F(\T)}$ ($\Spt_{{et}}^{\group, F(\T)}$) 
will denote the 
corresponding category.  Observe again from Example ~\ref{main.eg}(ii), that there are several possible choices 
for the sub-categories $\C'_0$. $\Spt/S_{mot}^{\group, \T}$, $\Spt/S_{et}^{\group, \T}$, $\Spt/S_{mot}^{\group, F(\T)}$, $\Spt/S_{et}^{\group, F(\T)}$
will denote the corresponding categories of spectra which are all pointed by  $S$.
\vskip .3cm
If $S^m$ for some fixed positive integer $m$ is used in the place of $\T$ above, the 
resulting categories 
will be denoted $\Spt_{{mot}}^{\group, S^m}$ and $\Spt_{et}^{\group, S^m}$, etc.
\vskip .3cm
The group $\group$ will
be suppressed
 when we consider spectra with trivial action by $\group$. 
\end{definition}

\begin{definition} 
\label{ho.mot.spec} 
(i) ${\mathbf {HSpt}}_{{mot}}^{\group}$ (${\mathbf {HSpt}}_{{et}}^{\group}$) will denote the 
stable homotopy category of motivic (\'etale) spectra while ${\mathbf {HSpt}}/{S_{mot}}^{\group}$ (${\mathbf {HSpt}}/{S_{et}}^{\group}$) will denote the 
stable homotopy category of motivic (\'etale) spectra pointed by $S$.
\vskip .3cm
(ii) ${\mathbf {HSpt}}_{{mot}}^ {\group, \T}$ 
(${\mathbf {HSpt}}_{{et}}^{\group, \T}$,  ${\mathbf {HSpt}}/{S_{mot}}^ {\group, \T}$ 
 and ${\mathbf {HSpt}}/{S_{et}}^{\group, \T}$ )  will denote the corresponding 
stable homotopy category of  $\T$-motivic (\'etale) spectra.
\vskip .3cm
The  group $\group$ will
be suppressed
 when we consider spectra with trivial action by $\group$. 
\end{definition}

\subsection{\bf Smash products of spectra and ring spectra}
\label{smash.products.def}
First we recall the construction of smash products of enriched functors from \cite[2.3]{Dund1}. 
Given $F', F \eps [\C'_0, \C]$, their {\it smash product} $F' \wedge F$ is defined as the left 
Kan-extension along the 
monoidal product $\wedge: \C'_0 \times \C'_0 \ra \C'_0$ of the $\C$-enriched functor $F' \times F \eps [\C'_0 \times \C_0', \C]$. Given pre-spectra ${\mathcal X}$ and
${\mathcal Y}$ in $[Sph(\C'_0), \C]$, this also defines their smash product ${\mathcal X} \wedge {\mathcal Y}$. 
 One also defines the {\it derived smash product} ${\mathcal X} {\overset L \wedge} {\mathcal Y}$ by  $C({\mathcal X}) \wedge {\mathcal Y}$, where $C({\mathcal X})$ is a cofibrant replacement of ${\mathcal X}$ in the stable model structure on
 $Spectra (\C)$. Now one may observe that $Spectra (\C_0', \C)=[\C_0', \C]$
is itself symmetric monoidal with respect to the smash product and that, when $\C'_0$ is a sub-category of $\C$
 the inclusion functor $\Sigma: \C_0' \ra \C$ is the
unit for the smash product. (See \cite[Theorem 2.6]{Dund1} and also \cite{Day}.)
(For the situation considered in Example ~\ref{main.eg}(iv), it is shown in \cite[p. 424]{Dund1} that there is
an obvious functor $\C' \ra \C$ that is monoidal, which then becomes the unit for the smash product.)
 We call this the {\it sphere spectrum} and denote it as 
$\Sigma$. 
\label{ring.struct}
An {\it algebra} in $[\C'_0, \C]$ is an enriched functor ${\mathcal X}$ provided with an
associative and unital pairing $\mu: {\mathcal X} \wedge {\mathcal X} \ra {\mathcal X}$, i.e.
for $T_V, T_W , T_K\eps \C_0'$, one is given a pairing, 
$\Hom_{\C}(T_V \wedge T_W, T_K) \wedge {\mathcal X}(T_V) \wedge {\mathcal X}(T_W) \ra {\mathcal X}(T_K)$
 which is compatible as $T_W, T_V$ and $T_K$ vary and is also associative and unital. 
\vskip .3cm
A {\it ring spectrum} in $Spectra(\C)$ is an algebra in $[\C_0', \C]$ for some choice of $\C_0'$
satisfying the above hypotheses. A map of ring-spectra is defined as follows.
If ${\mathcal X}$ is an algebra in $[\C_0', \C]$ and ${\mathcal Y}$ is an algebra in $[\D_0', \C]$
 for some choice of $\C$-enriched monoidal categories $\C_0'$ and $\D_0'$ (with both monoidal structures denoted $\wedge$), then a map $\phi: {\mathcal X} \ra {\mathcal Y}$
of ring-spectra is given by the following data: (i) a $\C$- enriched covariant functor $\phi: \C_0' \ra \D_0'$ compatible
with $\wedge$ and (ii) a map of spectra ${\mathcal X} \ra \phi_*({\mathcal Y})$ compatible with the ring-structures.
(Here $\phi_*({\mathcal Y})$ is the  spectrum in $Spectra(\C, \C_0')$ defined by $\phi_*({\mathcal Y}) (T_V) = {\mathcal Y}(\phi(T_V))$.
\vskip .3cm
Given a ring spectrum $A$, a left module spectrum $M$ over $A$ is a spectrum $M$ provided with a pairing
$\mu: A \wedge M \ra M$ which is associative and unital. One defines right module spectra over $A$ similarly.
Given a left (right) module spectrum $M$ ($N$, \res) over $A$, one defines 
\be \begin{equation}
\label{smash.over.A}
M {\underset A \wedge } N = coequalizer (M \wedge A \wedge N {\stackrel \ra \ra} M \wedge N)
\end{equation} \ee
\vskip .3cm \noindent
where the coequalizer is taken in the category of spectra and the two maps correspond to the module structures on $M$ and $N$, \res. 
\vskip .3cm
Let $Mod(A)$ denote the category of left module spectra over $A$ with morphisms being maps of left module spectra over
 $A$.  Then the underlying functor $U: Mod(A) \ra Spectra (\C)$ has a left-adjoint given by the functor $F_A(N) = A \wedge N$. The composition $T=F_A \circ U$ defines a triple and we let $TM = \hocolimD \{T^nM|n\}$. Since a map $f:M' \ra M$ in
$Mod(A)$ is a weak-equivalence of spectra if and only if $U(f)$ is, one may observe readily that $TM$ is weakly-equivalent to $M$. Therefore, one defines
\be \begin{equation}
\label{der.smash.over.A}
M {\overset L { {\underset A \wedge }}} N = TM {\underset A \wedge } N = coequalizer (TM \wedge A \wedge N {\stackrel \ra \ra} TM \wedge N)
\end{equation} \ee
\vskip .3cm \noindent
\begin{examples}
 \begin{enumerate}[\rm (i)]
  \item{ Assume the situation of ~\ref{main.eg} (i). Let $F :\C \ra \C$ denote a functor commuting with
$\wedge$ and with colimits and provided with a natural augmentation $\epsilon : id \ra F$ also
compatible with $\wedge$.  Then $\Sigma _{mot, F}$ is a ring spectrum in $Spectra (\C)$ and
$\epsilon$ induces a map of ring spectra $\Sigma _{mot} \ra \Sigma _{mot, F}$. Whether or not $F$ is 
provided with the augmentation,  $\Sigma_{mot, F}$ is a ring spectrum in $Spectra(\C, F)$: in fact it is
the unit of this symmetric monoidal category. .
\vskip .3cm
Let $\phi: F \ra G$ denote a natural transformation
between two functors $\C \ra \C$ that commute with the tensor structure $\wedge$ and with all 
small colimits. Let $\C'$ denote a subcategory of $\C$ as before and let $\C'_F$, $\C'_G$ denote 
the corresponding subcategories. Then $\phi$ induces a functor $\C'_F \ra \C'_G$ compatible with
$\wedge$. 
Let $\Sigma _{mot, F}$ and $\Sigma _{mot, G}$ denote the motivic sphere spectra defined there. Then
these are ring-spectra and $\phi$ induces a map commuting with the ring structures.}
\item{ Assume the situation of Example ~\ref{main.eg} (ii). Let $\phi: F \ra G$ be as above. 
Then for any $P \eps \C$,
$\Sigma_{F(P)} $ is a ring-spectrum in $Spectra (\C, F(P))$ and $\Sigma _{G(P)}$ is a ring-spectrum
 in $Spectra (\C,G(P))$ with $\phi$ inducing a map of these ring-spectra. }
 \end{enumerate}
\end{examples}
\subsection{\bf Model structures on $Spectra (\C)$ and $Presp(\C)$}
\vskip .3cm
We will begin with the level-wise model structure on $Spectra (\C)$ which will be defined as
follows.  
\begin{definition}
\label{levelwise}
(The level model structure on $Spectra (\C)$ and $Presp(\C)$.)
 \label{level.struct} A map $f: {\mathcal X}' \ra {\mathcal X}$ in $Spectra (\C)$  is a 
{\it level equivalence} ({\it level fibration}, {\it level trivial fibration}, {\it level cofibration},
{\it level trivial cofibration}) if each ${\mathcal E}val_{T_V}(f)$ is a weak-equivalence (fibration, trivial fibration,
cofibration, trivial cofibration, \res) in $\C$. 
 A map $f: {\mathcal X}' \ra {\mathcal X}$ in $Presp(\C)$  is a 
{\it level equivalence} ({\it level fibration}, {\it level trivial fibration}, {\it level cofibration},
{\it level trivial cofibration}) if each $Eval_{T_V}(f)$ is a weak-equivalence (fibration, trivial fibration,
cofibration, trivial cofibration, \res) in $\C$.  
\vskip .3cm
Such a map $f$ is a {\it projective cofibration}
if it has the left-lifting property with respect to every level trivial fibration.
\end{definition}
\vskip .3cm
Let $\oI$ ($\oJ$) denote the generating cofibrations (generating trivial cofibrations, \res ) of
the model category $\C$. 
We define the generating cofibrations $\oI_{Sp}$ ($\oI_{Presp}$) to be 
\[\bigcup_{T_V \eps \C_0'} \{\F_{T_V}(i)
\mid i \eps \oI \} (\mbox{ and } \bigcup_{T_V \eps \C_0'} \{F_{T_V}(i)
\mid i \eps \oI \}, \mbox{ \res }) \mbox { and }\]
 the generating trivial cofibrations $\oJ_{Sp}$ ($\oJ_{Presp}$) to be 
\[\bigcup_{T_V \eps \C_0'}
\{\F_{T_V}(j)|j \eps \oJ\} \, \,  (\bigcup_{T_V \eps \C_0'}
\{F_{T_V}(j)|j \eps \oJ\}, \mbox{ \res }).\]
\begin{proposition} 
\label{key.props.unstable}
(i) If $A \eps \C$ is small relative to $\oI$ (or the cofibrations
 in $\C$), then $\F_{T_V}(A)$ ($F_{T_V}(A)$) is small relative $\oI_{Sp}$ ($\oI_{Presp}$, \res).
The corresponding assertions hold also for trivial cofibrations with $\oI_{Sp}$ ($\oI_{Presp}$) replaced by
$\oJ_{Sp}$ ($\oJ_{Presp}$, \res). 
\vskip .3cm
(ii) A map $f$ in $Spectra (\C)$ (in $Presp(\C)$) is a level cofibration if and only if it has the left lifting property
 with respect to all maps of the form $\R_{T_V}(g)$ ($R_{T_V}(g)$, \res) where $g$ is a trivial fibration in $\C$.
A map $f$ in $Spectra (\C)$ (in $Presp(\C)$) is a level trivial cofibration if and only if it has the left lifting property
 with respect to all maps of the form $\R_{T_V}(g)$ ($R_{T_V}(g)$, \res) where $g$ is a fibration in $\C$.
\vskip .3cm 
(iii) Every map in $\oI_{Sp}-cof$ ($\oI_{Presp}-cof$) is a level cofibration and every map in $\oJ_{Sp}-cof$ ($\oJ_{Presp}-cof$)
is a level trivial cofibration. The projective cofibrations (projective trivial cofibrations) identify with $\oI_{Sp}-cof$ ($\oJ_{Sp}-cof$, \res).
\vskip .3cm
(iv) The domains of $\oI_{Sp}$ ($\oJ_{Sp}$) are small relative to $\oI_{Sp}-cell$ ($\oJ_{Sp}-cell$, \res). Similarly
the domains of $\oI_{Presp}$ ($\oJ_{Presp}$) are small relative to $\oI_{Presp}-cell$ ($\oJ_{Presp}-cell$, \res).
\end{proposition}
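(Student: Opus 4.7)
My plan is to exploit the free–evaluation adjunctions $(\F_{T_V},{\mathcal E}val_{T_V})$ and $({\mathcal E}val_{T_V},\R_{T_V})$ (and likewise for $Presp(\C)$) and to translate each assertion back to the analogous one in $\C$, where the corresponding statement holds by the cofibrant generation of $\C$. The crucial calculation that glues everything together is the identification
\[
{\mathcal E}val_{T_W}\bigl(\F_{T_V}(i)\bigr)\;=\;i\wedge\Hom_{\C_0'}(T_V,T_W),
\]
combined with the fact that, under the hypotheses on $\C$ and $\C_0'$ (all objects of $\C_0'$ are cofibrant in $\C$ and $\wedge$ satisfies the pushout-product axiom), each $\Hom_{\C_0'}(T_V,T_W)$ is cofibrant in $\C$. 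Hence the pushout-product axiom forces $i\wedge\Hom_{\C_0'}(T_V,T_W)$ to be a cofibration in $\C$ whenever $i\in\oI$, and a trivial cofibration whenever $i\in\oJ$. The same calculation works in $Presp(\C)$, but in that case $\Hom_{Sph(\C_0')}(T_V,T_W)$ is simply an object of $\C_0'$ (or $*$), so cofibrancy is immediate.

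For (ii), I would apply the adjunction $({\mathcal E}val_{T_V},\R_{T_V})$ termwise: $f$ has the left-lifting property with respect to every $\R_{T_V}(g)$ (as $T_V$ varies and $g$ ranges over trivial fibrations of $\C$) if and only if each ${\mathcal E}val_{T_V}(f)$ has the LLP with respect to every trivial fibration in $\C$, i.e.\ is a cofibration; this is the definition of a level cofibration. The trivial-cofibration case is the same with fibrations replacing trivial fibrations. For (iii), the level-cofibration assertion follows from the above calculation together with the fact that level cofibrations are stable under coproducts, cobase change, and transfinite composition (since ${\mathcal E}val_{T_V}$ preserves all these colimits, reducing to the corresponding closure in $\C$). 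The identification of $\oI_{Sp}\text{-}\mathrm{cof}$ with the projective cofibrations is then formal: $\oI_{Sp}\text{-}\mathrm{inj}$ is, by the adjunction, precisely the class of level trivial fibrations, so $\oI_{Sp}\text{-}\mathrm{cof}=\bigl(\oI_{Sp}\text{-}\mathrm{inj}\bigr)\text{-}\mathrm{proj}$ is by definition the class of projective cofibrations; the trivial case is analogous.

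For (i) and (iv), the strategy is the same. Given a transfinite composition ${\mathcal X}_0\to{\mathcal X}_1\to\cdots\to\colim_\alpha{\mathcal X}_\alpha$ of $\oI_{Sp}\text{-}\mathrm{cell}$ maps, the functor ${\mathcal E}val_{T_V}$ commutes with colimits and, by the calculation above together with (iii), sends each step to a pushout of a cofibration in $\C$, hence to a transfinite composition of cofibrations in $\C$. Given $\phi:\F_{T_V}(A)\to\colim_\alpha{\mathcal X}_\alpha$, the adjoint map $A\to{\mathcal E}val_{T_V}(\colim_\alpha{\mathcal X}_\alpha)=\colim_\alpha{\mathcal E}val_{T_V}({\mathcal X}_\alpha)$ factors through some stage by smallness of $A$ relative to $\oI\text{-}\mathrm{cell}$ (or to cofibrations of $\C$); the factorization lifts back through $\phi$ by the adjunction. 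The trivial-cofibration case is identical with $\oJ$ in place of $\oI$; and the $Presp(\C)$ analogues are proved the same way.

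The main technical obstacle is the cofibrancy of $\Hom_{\C_0'}(T_V,T_W)$ in $\C$, on which the entire argument hinges. This is where the hypothesis that \emph{every} object of $\C$ is cofibrant and that $\C_0'$ is a full enriched subcategory of $\C$ closed under $\wedge$ is used; for $Presp(\C)$ the assumption is weaker because $\Hom_{Sph(\C_0')}(T_V,T_W)$ is tautologically an object of $\C_0'$ (or the basepoint), and in the symmetric-spectra-style variant this is precisely the point at which one must verify cofibrancy by hand, as discussed in the remark following the hypotheses.
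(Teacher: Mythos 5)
Your proposal matches the paper's proof in all essential respects: part (ii) by the $({\mathcal E}val_{T_V},\R_{T_V})$-adjunction, parts (i) and (iv) by commuting ${\mathcal E}val_{T_V}$ with colimits and appealing to smallness in $\C$ via the $(\F_{T_V},{\mathcal E}val_{T_V})$-adjunction, and the key observation that each $\Hom_{\C_0'}(T_V,T_W)$ is cofibrant so that smashing with it preserves (trivial) cofibrations.

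One small refinement to the first half of part (iii). You argue that level cofibrations are closed under coproducts, cobase change, and transfinite composition and invoke this to conclude that every map in $\oI_{Sp}\text{-}\mathrm{cof}$ is a level cofibration. By itself that closure only places $\oI_{Sp}\text{-}\mathrm{cell}$ (and, adding retracts, its retract closure) inside the level cofibrations; passing from there to all of $\oI_{Sp}\text{-}\mathrm{cof}=\text{llp}(\oI_{Sp}\text{-}\mathrm{inj})$ needs the small object argument, which in turn rests on (iv), and so risks a circularity. The paper's route is more direct and avoids this: once you know every map in $\oI_{Sp}$ is a level cofibration, it follows (by part (ii) read dually) that $\R_{T_V}(g)\in\oI_{Sp}\text{-}\mathrm{inj}$ for every trivial fibration $g$ in $\C$; hence $\oI_{Sp}\text{-}\mathrm{cof}=\text{llp}(\oI_{Sp}\text{-}\mathrm{inj})\subseteq\text{llp}\{\R_{T_V}(g)\}$, which by (ii) is exactly the class of level cofibrations. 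No closure properties or small-object argument needed. With this substitution your argument is complete and agrees with the paper.
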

\begin{proof} We will only consider the case of spectra, since the corresponding statements for pre-spectra are similar. 
(i) The main observations here are that the functor ${\mathcal E}val_{T_V}$ being left adjoint to $\R_{T_V}$, ${\mathcal E}val_{T_V}$ commutes 
with all small colimits and that all the maps in $\oI_{Sp}-cell$ are level cofibrations, i.e. cofibrations in $\C$ after applying any 
${\mathcal E}val_{T_V}$. This proves the case if $A$ is small 
relative to the cofibrations in $\C$. If $A$ is small relative to $\oI$, one may
invoke \cite[Proposition 2.1.16]{Hov-1} to observe that $A$ is then small relative to the cofibrations in $\C$.
 \vskip .3cm
(ii) Since $\R_{T_V}$ is right adjoint to ${\mathcal E}val_{T_V}$, ($R_{T_V}$ is right adjoint to $Eval_{T_V}$) $f$ has the left lifting property with 
respect to
$\R_{T_V}(g)$ ($R_{T_V}(g)$) if and only if ${\mathcal E}val_{T_V}(f)$ ($Eval_{T_V}(f)$) has the left-lifting property with respect to $g$. (ii) follows
readily from this observation.
\vskip .3cm
(iii) Recall every object of $\C$ is assumed to be cofibrant. Therefore, smashing with $\Hom_{\C_0'}(T_V, T_W)$ or any $T_V$, with 
$T_V, T_W \eps \C'_0$
preserves cofibrations of $\C$ when $\C_0'$ is a full enriched sub-category of $\C$. The only case we consider where $\C_0'$ is not an enriched full
sub-category of $\C$ is the case in Examples ~\ref{main.eg}(iv) where our definition ensures that $\Hom_{\C_0'}(T_V, T_W)$ is 
cofibrant in $\C$.
Therefore, every map in $\oI_{Sp}$ is a level cofibration. By (ii) this means
 $\R_{T_V}(g) \eps \oI_{Sp}-inj$ for all trivial fibrations $g$ in $\C$. Recall every map in $\oI_{Sp}-cof$ has
the left lifting property with respect to every map in $\oI_{Sp}-inj$ and in particular with 
respect to every 
map $\R_{T_V}(g)$, with $g$ a trivial fibration in $\C$. Now the adjunction between ${\mathcal E}val_{T_V}$ and $\R_{T_V}$
completes the proof for $\oI_{Sp}-cof$. The proof for $\oJ_{Sp}-cof$ and for pre-spectra is similar. These prove the first statement in (iii).
The second statement in (iii) follows readily by observing that the trivial level fibrations (level fibrations) identify with $\oI_{Sp}-inj$
($\oJ_{Sp}-inj$,\res).
\vskip .3cm
(iv) follows readily in view of the adjunction between the free functor $\F_{T_V}$ and ${\mathcal E}val_{T_V}$. 
\end{proof}
\vskip .3cm
We now obtain the following corollary.
\vskip .3cm
\begin{corollary} 
\label{level.model.1}
 The projective cofibrations, the level fibrations and level equivalences define a cofibrantly 
generated model category structure on $Spectra (\C)$ with the generating cofibrations (generating trivial
cofibrations) being $\oI_{Sp}$ ($\oJ_{Sp}$, \res). This model structure (called {\it the 
projective model structure} on
 $Spectra (\C)$) has the following properties:
\be \begin{enumerate}[\rm (i)]
\item{It is left-proper. It is also right proper if
the corresponding model structure on $\C$ is right proper. It is cellular if
 the corresponding model structure on $\C$ is cellular.}
\item{The objects in 
$\bigcup_{T_V \eps \C_0'}\{\F_{T_V}(\C_0')\}$ are all finitely presented. The category $Spectra(\C)$
is symmetric monoidal with the pairing defined as follows.
One first considers the following product for $\chi, \chi' \eps[\C_0', \C]$ 
\[\chi \bar \wedge \chi': \C_0' \times \C_0' \ra \C, \chi \bar \wedge \chi' (T_V , T_W) = \chi(T_V) \wedge \chi'(T_W).\]
Next one defines $\chi \wedge \chi'$ as the left Kan extension of $\chi \bar \wedge \chi'$ along the monoidal product
$\wedge : \C_0' \times \C_0' \ra \C_0'.$
Now the unit of the above monoidal structure is the (enriched) inclusion functor
$$i:  \C_0' \ra \C. $$}
\item{The projective model structure on $Spectra (\C)$ is weakly finitely generated when the given model 
structure on $\C$ is weakly finitely generated.}
\item{The category $Spectra (\C)$ is locally presentable and hence  $Spectra(\C)$ is a tractable (and hence a combinatorial)
model category.}
\item{With the above structure, $Spectra(\C)$ is a symmetric monoidal model category satisfying the monoidal axiom.}
\end{enumerate} \ee
\end{corollary}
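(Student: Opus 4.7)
The plan is to invoke Hovey's recognition theorem \cite[Theorem 2.1.19]{Hov-1} for cofibrantly generated model categories applied to the sets $\oI_{Sp}$ and $\oJ_{Sp}$. Smallness of the domains relative to the respective cell classes is already provided by Proposition ~\ref{key.props.unstable}(iv); the two-out-of-three and retract axioms for level equivalences are inherited from $\C$, since they are detected objectwise by the functors ${\mathcal E}val_{T_V}$, which commute with colimits. By the adjunction ${\mathcal F}_{T_V} \dashv {\mathcal E}val_{T_V}$, one sees that $\oI_{Sp}\textrm{-inj}$ is the class of level trivial fibrations and $\oJ_{Sp}\textrm{-inj}$ is the class of level fibrations; in particular, the intersection of level fibrations with level equivalences is precisely $\oI_{Sp}\textrm{-inj}$. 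The last nontrivial containment needed is $\oJ_{Sp}\textrm{-cell} \subseteq \oI_{Sp}\textrm{-cof} \cap (\text{level weak equivalences})$. The first part is Proposition ~\ref{key.props.unstable}(iii). For the second part, since $\C_0'$ consists of cofibrant objects of $\C$, each ${\mathcal E}val_{T_V} \circ {\mathcal F}_{T_W}$ is, level by level, smashing with the cofibrant object $\Hom_{\C_0'}(T_W, T_V)$ and thus preserves the class $\oJ\textrm{-cell}$, which consists of weak equivalences in $\C$ by hypothesis.

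For (i), left-properness is immediate: pushouts in $Spectra(\C)$ are computed objectwise (as ${\mathcal E}val_{T_V}$ is a left adjoint), projective cofibrations are levelwise cofibrations by Proposition ~\ref{key.props.unstable}(iii), and $\C$ is left proper. Right-properness under the corresponding hypothesis on $\C$ follows symmetrically. Cellularity, when $\C$ is cellular, lifts since each generator in $\oI_{Sp}$ and $\oJ_{Sp}$ is of the form ${\mathcal F}_{T_V}(i)$ with $i$ belonging to the cellular data of $\C$, and effective monomorphism conditions transfer through ${\mathcal F}_{T_V}$ (again using that ${\mathcal E}val_{T_V}$ is a left adjoint).

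For (ii), the finite presentability of ${\mathcal F}_{T_V}(A)$ for $A \eps \C_0'$ follows from the adjunction isomorphism $\Hom(\mathcal F_{T_V}(A), Y) \cong \Hom_{\C}(A, Y(T_V))$ together with the fact that filtered colimits in $Spectra(\C)$ are computed levelwise and $A$ is finitely presented in $\C$ by assumption on $\C_0'$. The symmetric monoidal structure is the Day convolution, whose existence and properties (including identification of the unit with the enriched inclusion $\C_0' \to \C$) are given by \cite[Theorem 2.6]{Dund1}. Statement (iii) on weak finite generation then follows because the generating sets $\oI_{Sp}$ and $\oJ_{Sp}$ are obtained by applying the free functors ${\mathcal F}_{T_V}$ to the generating sets for $\C$, and ${\mathcal F}_{T_V}$ preserves smallness by the same adjunction argument.

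For (iv), local presentability of the enriched functor category $[\C_0',\C]$ from a small enriched category into a locally presentable enriched category is standard (cf. \cite[Proposition A.2.8.2]{Lur}); combined with the already-established cofibrantly generated structure this gives the combinatorial structure, and tractability follows because the domains and codomains of $\oI_{Sp}$ and $\oJ_{Sp}$ are ${\mathcal F}_{T_V}$ applied to cofibrant objects of $\C$, which are cofibrant since ${\mathcal F}_{T_V}$ is a left Quillen functor. The main obstacle is the pushout-product axiom in (v): one reduces to checking it on generators ${\mathcal F}_{T_V}(i) \Box {\mathcal F}_{T_W}(j)$, where the Day convolution yields the identification with ${\mathcal F}_{T_V \wedge T_W}(i \Box j)$ up to the left Kan extension machinery, after which the pushout-product axiom in $\C$ finishes the argument; the monoid axiom follows from the same analysis applied to transfinite compositions of cobase changes of maps tensored with arbitrary spectra, combined with the assumption that weak equivalences in $\C$ are stable under filtered colimits.
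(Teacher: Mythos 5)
Your proof is correct and follows essentially the same line as the paper: both verify the hypotheses of Hovey's recognition theorem using the smallness and level-cofibration facts from Proposition~\ref{key.props.unstable}, deduce (i) from the levelwise computation of (co)limits, and dispatch (ii)--(v) to the same references (Day's theorem, \cite{Dund1}, and \cite{Lur}). Your more explicit treatment of the pushout-product axiom in (v) via the identity $\F_{T_V}(i)\,\square\,\F_{T_W}(j)\cong\F_{T_V\wedge T_W}(i\,\square\, j)$ is precisely what the paper's cited \cite[Theorem 4.4]{Dund1} establishes, so there is no real divergence.
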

\begin{proof} Recall from Proposition ~\ref{diagram.cats.excellent} and Proposition ~\ref{localization.1}(iii) that
the category $\C$ is an excellent model category. Therefore, one may deduce the existence of the above projective model structure readily by 
invoking \cite[Proposition A.3.3.2]{Lur}. However, we will provide some details for the convenience of the reader.
\vskip .3cm
The retract and two out of three axioms (see \cite[Definition 1.1.3]{Hov-1}) are immediate, as is the lifting axiom
for a projective cofibration and a level trivial fibration. Clearly a map is a level trivial
fibration if and only if it is in $\oI_{Sp}-inj$. Therefore, a map is a projective cofibration if and only if it is in
$\oI_{Sp}-cof$. Now Proposition ~\ref{key.props.unstable}(iv) show that \cite[Theorem 2.1.14]{Hov-2} applied to $\oI_{Sp}$ then produces
a functorial factorization of a map as the composition of a projective cofibration followed by a level trivial fibration.
\vskip .3cm
By adjunction, a map is a level fibration if and only if it is in $\oJ_{Sp}-inj$. Proposition ~\ref{key.props.unstable}(iii) shows that
every map in $\oJ_{sp}-cof$ is a level equivalence. Such maps have left-lifting property with respect to all level   fibrations and hence
with respect to all level trivial fibrations. Now Proposition ~\ref{key.props.unstable}(iv) shows that \cite[Theorem 2.1.14]{Hov-2} applied to 
$\oJ_{Sp}$ then produces
a functorial factorization of a map as the composition of a projective  cofibration which is also a level equivalence 
followed by a level fibration. 
\vskip .3cm
Next we show that any map $f$ which is a projective cofibration and a level equivalence is
in $\oJ_{Sp}-cof$, and hence has the left lifting property with respect to level fibrations.
To see this, we factor $ f = pi$ where $i$ is in $\oJ_{Sp}-cof$ and $p$ is in $\oJ_{Sp}-inj$. Then $p$ is a
level fibration. Since $f$ and $i$ are both level equivalences, so is $p$. Therefore $p$ is a level trivial fibration and $f$ has the
left lifting property with respect to $p$. This shows $f$ is a retract of $i$: see, for example, \cite[Lemma 1.1.9]{Hov-1}.  In particular 
$f$ belongs to  $\oJ_{Sp}-cof$. Using the adjunction between the functor $\F_{T_V}$ and $Eval_{T_V}$, one may now readily check any of the
remaining hypotheses in \cite[Proposition 2.1.19]{Hov-1}. These prove the existence of the projective model structure on $Spectra(\C)$.
\vskip .3cm
Since pushouts and pullbacks  in $Spectra(\C)$ are taken level-wise, the statements in (i) are clear.
The first assertion in (ii) is clear since the objects in the subcategory $\C_0'$ are assumed to be finitely presented in $\C$. 
The assertions in (ii) on the monoidal structure follow from 
a theorem of Day: see \cite{Day}. See also \cite[Theorem 2.6]{Dund1}. Statement (iii) follows from \cite[Theorem 4.2]{Dund1}.
 Statement (iv) follows from \cite[Proposition A.3.3.2]{Lur}. The last statement follows from \cite[Theorem 4.4]{Dund1}.
\end{proof}
\vskip .3cm
\subsubsection{\bf The injective model structure on $Spectra(\C)$.}
Here we define a map $f:\chi' \ra \chi$ of spectra to be an {\it injective cofibration} (an {\it injective weak-equivalence})
if it is a level cofibration (a level equivalence, \res). The {\it injective fibrations} are defined by the lifting property with
respect to trivial cofibrations.  
\begin{proposition}
 \label{inj.model.struct}
This defines a combinatorial (in fact tractable) simplicial monoidal model structure on $Spectra (\C)$ that is left proper if $\C$ is. Every projective cofibration is
an injective (i.e. level) cofibration and every injective fibration is a level fibration. The cofibrations are the monomorphisms. 
The unit of the monoidal structure on $Spectra(\C)$ and in fact every object in $Spectra(\C)$ is cofibrant in this
model structure.
\end{proposition}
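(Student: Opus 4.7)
My approach is to leverage the fact, established in Proposition~\ref{diagram.cats.excellent} and Proposition~\ref{localization.1}(iii), that $\C$ is an excellent model category in the sense of \cite[Definition A.3.2.16]{Lur}. Since $Spectra(\C)=[\C_0',\C]$ is a category of $\C$-enriched functors from a small $\C$-enriched category to $\C$, and $\C$ is excellent (hence in particular combinatorial with monomorphisms as cofibrations), one knows that the level cofibrations and level weak equivalences determine a combinatorial model structure; this is the content of \cite[Proposition A.3.3.2]{Lur} applied in the ``injective'' direction together with \cite[Remark A.3.3.4]{Lur}. Tractability is then immediate: because cofibrations in $\C$ are exactly the monomorphisms, the level cofibrations in $Spectra(\C)$ are precisely the monomorphisms of $Spectra(\C)$. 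In particular the initial map into any object is a cofibration, so every object---including the unit of the monoidal structure---is cofibrant. The simplicial enrichment is inherited from that of $\C$ level-wise.

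The comparisons with the projective model structure are formal. Every projective cofibration is a level cofibration by Proposition~\ref{key.props.unstable}(iii), which is exactly the statement that it is an injective cofibration. For the statement about fibrations, note that every projective trivial cofibration is a level trivial cofibration, hence an injective trivial cofibration, so any injective fibration has the right lifting property against all projective trivial cofibrations and is therefore a level fibration.

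Left properness reduces to left properness of $\C$: pushouts in $Spectra(\C)$ are computed level-wise, so a pushout of a level equivalence along a level cofibration is, at each level, a pushout of a weak equivalence in $\C$ along a cofibration of $\C$. For the monoidal axiom I would use the pushout-product axiom already available in $\C$ (via axioms (A2) and (A4) of \cite[Definition A.3.2.16]{Lur}) together with the description of $\wedge$ on $Spectra(\C)$ as the left Kan extension along $\wedge:\C_0'\times\C_0'\to\C_0'$ given in Corollary~\ref{level.model.1}(ii). Given two injective cofibrations $f,g$, their pushout-product $f\square g$ is computed level-wise as a colimit of pushout-products in $\C$; since monomorphisms in $\C$ are stable under the monoidal product and under colimits, $f\square g$ is again a level monomorphism, hence an injective cofibration. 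When either $f$ or $g$ is additionally a level equivalence, stability of weak equivalences under filtered colimits (axiom (A3)) combined with the monoidal axiom in $\C$ ensures that $f\square g$ remains a level equivalence.

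The main obstacle I anticipate is the monoidal axiom, since the smash product of spectra is defined through a left Kan extension rather than pointwise, so one must argue that level (trivial) cofibrations are preserved under this construction. This is, however, precisely the feature that the excellent monoidal structure on $\C$ was formulated to handle: once the Kan extension is unpacked as an iterated colimit of smash products in $\C$, the argument reduces to repeated application of axioms (A2)--(A4) of \cite[Definition A.3.2.16]{Lur}, together with the fact that all objects of $\C$ are cofibrant so that no cofibrant replacement is needed before smashing.
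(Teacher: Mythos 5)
Your approach matches the paper's: both invoke \cite[Proposition A.3.3.2]{Lur} for the existence of the combinatorial injective model structure, obtain left-properness from the level-wise nature of cofibrations and weak equivalences, deduce the projective/injective comparisons from Proposition~\ref{key.props.unstable}(iii), and note cofibrancy of the unit (and indeed of all objects) because every object of $\C$ is cofibrant. The only place you elaborate beyond the paper is in flagging and sketching a treatment of the Kan-extension issue in the pushout-product axiom, which the paper dispatches rather tersely; this is not a different route, just a more explicit version of the same argument.
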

\begin{proof} We start with the observation that the category $\C$ is a simplicially enriched tractable simplicial model category.
 The left-properness is obvious, since the cofibrations and weak-equivalences are defined level-wise.
 The first conclusion follows now from \cite[Proposition A.3.3.2]{Lur}: observe that the pushout-product axiom holds
since cofibrations (weak-equivalences) are object-wise cofibrations (weak-equivalences, \res) and the pushout-product
 axiom holds in the monoidal model category $\C$. The second statement follows from Proposition ~\ref{key.props.unstable}(iii).
Recall the unit of $Spectra(\C)$ is the inclusion functor
$\C'\ra \C$. We will denote this by $\Sigma$. To prove it is cofibrant, all one has to observe is that 
$\Sigma(T_V)=T_V$ which is cofibrant in $\C$ for every $T_V \eps \C'$.
\end{proof}
\begin{remark} One may observe that the unit of $Spectra(\C)$ may not be cofibrant in the projective model structure, whereas
it is cofibrant in the above injective model structure. This makes it advantageous to consider the injective model structure.
\end{remark}
\vskip .3cm
Recall a set of homotopy generators in a model category is a set of objects so that
every object in the model category is a (filtered) homotopy colimit of these objects.

\begin{proposition}
 \label{homotopy.generators}
 In the projective model structure on $Spectra(\C)$, every object is the homotopy colimit of a simplicial object in $Spectra(\C)$ which is cofibrant in
each simplicial degree.
\end{proposition}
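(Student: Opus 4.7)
My plan is to use the two-sided bar construction arising from the level-wise free/evaluate adjunction on $Spectra(\C)$. Consider the product category $\prod_{T_V \in \C_0'} \C$ together with the Quillen pair
\[
\F_\bullet : \prod_{T_V \in \C_0'} \C \;\rightleftarrows\; Spectra(\C) : {\mathcal E}val_\bullet,
\]
where $\F_\bullet((X_{T_V})_{T_V})=\bigvee_{T_V} \F_{T_V}(X_{T_V})$ and ${\mathcal E}val_\bullet({\mathcal X}) = ({\mathcal X}(T_V))_{T_V}$. Write $G = \F_\bullet \circ {\mathcal E}val_\bullet$ for the induced comonad on $Spectra(\C)$, and for any ${\mathcal X} \in Spectra(\C)$ form the standard simplicial resolution $G_\bullet({\mathcal X})$ with $G_n({\mathcal X}) = G^{n+1}({\mathcal X})$ and the usual face and degeneracy maps built from the counit and comultiplication of $G$, augmented over ${\mathcal X}$ via the counit.

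The first step is to verify that each $G_n({\mathcal X})$ is projectively cofibrant. Unwinding the comonad shows that $G_n({\mathcal X})$ is a coproduct of spectra of the form $\F_{T_V}(Y)$ with $Y \in \C$. By the standing hypothesis (ii) on $\C$ every object of $\C$ is cofibrant, and $\F_{T_V}$ is a left Quillen functor, being left adjoint to ${\mathcal E}val_{T_V}$ which by definition of the projective model structure preserves fibrations and trivial fibrations (see Corollary~\ref{level.model.1}). Hence each such $\F_{T_V}(Y)$ is a cofibrant spectrum, and since cofibrancy is preserved by coproducts, $G_n({\mathcal X})$ is cofibrant in every simplicial degree.

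The remaining task is to show that $\hocolimD G_\bullet({\mathcal X}) \simeq {\mathcal X}$, which I would do level-wise. The functor ${\mathcal E}val_{T_V}$ preserves both cofibrations and weak equivalences in the projective model structure (both are defined level-wise), so it is left Quillen, and the collection $\{{\mathcal E}val_{T_V}\}_{T_V \in \C_0'}$ jointly detects weak equivalences. Applying ${\mathcal E}val_{T_V}$ to the bar resolution yields the monadic bar construction $({\mathcal E}val_{T_V}\F_\bullet)^{\bullet+1}({\mathcal X}(T_V)) \to {\mathcal X}(T_V)$ in $\C$. This augmented simplicial object carries the classical extra degeneracy coming from the unit of the $(\F_\bullet, {\mathcal E}val_\bullet)$-adjunction and is therefore simplicially homotopy equivalent to its augmentation, as in \cite[Lemma 2.7, Proposition 2.8]{Dug}.

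The main obstacle is the last step: one must confirm that the homotopy colimit of a projectively cofibrant simplicial diagram in $Spectra(\C)$ commutes with each ${\mathcal E}val_{T_V}$, and that the bar resolution is sufficiently cofibrant for its ordinary realization to compute the homotopy colimit. The first reduces to ${\mathcal E}val_{T_V}$ being left Quillen combined with the objectwise cofibrancy already established. The second is a standard feature of the bar construction of a comonad: its degeneracies $s_i = G^i\delta G^{n-i}$ are built from the comultiplication of $G$ and split off free wedge summands, which (together with the objectwise cofibrancy and the simpliciality of the projective model structure given by Corollary~\ref{level.model.1}) yields the required Reedy cofibrancy of $G_\bullet({\mathcal X})$.
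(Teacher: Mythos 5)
Your argument is essentially the paper's: the paper also uses the free/forgetful adjunction between $[\C_0',\C]$ and $[(\C_0')^{disc},\C]=\prod_{T_V\in\C_0'}\C$ (denoting the left adjoint $\F$ and the forgetful functor $U$), forms the cotriple $\F\circ U$, takes its bar resolution $\F U_\bullet(\chi)$, and proves it is a resolution by the extra degeneracy that appears after applying $U$. Your additional paragraphs spelling out degreewise cofibrancy of $G_n(\chi)=\bigvee_{T_V}\F_{T_V}((G^n\chi)(T_V))$ and the Reedy/realization bookkeeping are correct and simply make explicit what the paper's terse proof leaves implicit; the only small imprecision is the remark that cofibrations in the projective structure are ``defined level-wise'' --- they are not so \emph{defined}, but by Proposition~\ref{key.props.unstable}(iii) every projective cofibration \emph{is} a level cofibration, which is all your argument actually uses.
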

\begin{proof}
 Since enriched functors $[\C_0', \C]$ are functors on $\C_0'$ with values in $\C$, one can find a simplicial resolution of
any enriched functor by a simplicial object of enriched functors, each term of which is a representable functor. More precisely,
let $(\C_0')^{disc}$ denote the category which has the same objects as $\C_0'$, but only identity maps. Then one may restrict any enriched functor $\C_0' \ra \C$ to
$(\C_0')^{disc}$: this restriction functor will be denoted $U:[\C_0', \C] \ra [(\C_0')^{disc}, \C]$. Observe that an enriched functor
$(\C_0')^{disc} \ra \C$ is given by specifying its values on the objects of $\C_0'$. Therefore, $U$  has a left adjoint defined
by sending $\bar \chi$ to $ \sqcup_{T_V \eps \C_0'} \Hom_{\C}(T_V, \quad) \wedge \bar \chi(T_V) = \sqcup_{T_V \eps \C_0'}\F_{T_V} (\bar \chi (T_V))$.
We denote this enriched functor by $\F$. Now $\F\circ U$ defines a triple and produces on iteration a simplicial object 
\vskip .3cm
$\F U_{\bullet}(\chi)$ together with an augmentation to $\chi$, for any enriched functor $\chi: \C_0' \ra \C$. 
\vskip .3cm 
To see that this is a 
simplicial resolution of $\chi$, one observes that it suffices to prove this after applying $U$ one more time to the above
simplicial object: then it will have an extra degeneracy, which proves the above simplicial object is in fact a resolution of $\chi$. 
\end{proof}
\subsubsection{\bf The stable model structures on $Spectra(\C)$.}
\label{stable.model}
We proceed to define the stable model structure on $Spectra(\C)$ by applying a suitable Bousfield
localization to the projective and injective model structures on $Spectra(\C)$. This follows the approach in
\cite[section 3]{Hov-3}. The corresponding model structures will be called the {\it projective stable model structure} and the {\it the injective stable
 model structure} on $Spectra (\C)$. (One may observe that the domains and co-domains of objects in $\oI$ are 
cofibrant, so that there is no need for a cofibrant replacement functor $Q$ as in \cite[section 3]{Hov-3}.)
\begin{definition} ($\Omega$-spectra)
 A spectrum ${ \chi} \eps Spectra(\C)$ is an  {\it $\Omega$-spectrum} if it is level-fibrant and 
each of the natural maps
${ \chi}(T_V) \ra \Hom_{\C}(T_{W}, { \chi}(T_V \wedge T_W))$, $T_V, T_W \eps \C_0'$ is a weak-equivalence in $\C$.
\end{definition}
\vskip .3cm
 Observe that giving a map 
${\mathcal F}_{T_V \wedge T_W}(T_W \wedge C) \ra {\mathcal F}_{T_V}(C)$ corresponds by adjunction to giving a map
$C \wedge T_W  \ra ({\mathcal F}_{T_V}(C))(T_V \wedge T_W) = C \wedge \Hom_{\C}(T_V, T_V \wedge T_W)$.   Since $T_W $ maps naturally to $\Hom_{\C}(T_V, T_V \wedge T_W)$,
there is a natural map $C \wedge T_W  \ra ({\mathcal F}_{T_V}(C))(T_V \wedge T_W)$.
Therefore 
we let
\be \begin{equation}
\label{S}     
{\mathbf S} \mbox{ denote the morphisms } \{{\mathcal F}_{T_V \wedge T_W}(C \wedge T_W ) \ra {\mathcal F}_{T_V}(C) \mid C \eps
 \mbox{ Domains or Co-domains of } \oI, T_V, T_W \eps \C_0' \}
\end{equation} \ee
\vskip .3cm \noindent
corresponding to the above  maps $C \wedge T_W  \ra C \wedge \Hom_{\C}(T_V, T_V \wedge T_W)$ by adjunction. 
\vskip .3cm
The {\it stable model structure} on $Spectra(\C)$ is obtained by localizing the projective or injective model
structure on $Spectra(\C)$ with respect to the maps in ${\mathbf S}$. The reason for considering such
maps is as follows: let $C \eps \C$ be an object as above and let $\chi \eps Spectra(\C)$ be fibrant in the projective or injective model structures. Then 
\[Map(C, \chi(T_V)) =Map(C, {\mathcal E}val_{T_V}(\chi)) \simeq Map(\F_{T_V}(C), \chi) \mbox{ and }\]
\[ Map(C, \Hom_{\C}(T_W, \chi(T_V \wedge T_W))) = Map(C, \Hom_{\C} (T_W, {\mathcal E}val_{T_V \wedge T_W}(\chi))) \simeq Map(\F_{T_V \wedge T_W}(C \wedge T_W ), \chi).\]
 Therefore to convert
$\chi$ into an $\Omega$ -spectrum, it suffices to invert the maps in ${\mathbf S}$. (See \cite[Proposition 3.2]{Hov-3} that shows it suffices to consider
the objects $C$ that form the domains and codomains of the generating cofibrations in $\C$.)
The ${\mathbf S}$-local weak-equivalences
(${\mathbf S}$-local fibrations) will be referred to as the {\it stable equivalences} 
({\it stable fibrations}, \res). The cofibrations in the localized model structure are the 
cofibrations in the projective or injective model structures on $Spectra(\C)$.

\begin{proposition} 
\label{stable.model.1}
(i) The corresponding stable model structure on $Spectra(\C)$ is cofibrantly 
generated and left proper when $\C$ is assumed to have these properties. It is also
locally presentable, and hence combinatorial (tractable).  The projective stable model structure on $Spectra(\C)$ is also
cellular if $\C$ is.
\vskip .3cm 
(ii) The fibrant objects in the stable model structure on $Spectra(\C)$ are the
 $\Omega$-spectra defined above. 
\vskip .3cm
(iii) The category $Spectra (\C)$ is a symmetric monoidal model category 
(i.e. satisfies the pushout-product axiom: see \cite[Definition  3.1]{SSch}) in both the projective and injective stable model structures with
the monoidal structure being the  same in both the
 model structures. In the injective model structure, the unit is cofibrant and the monoidal axiom (see \cite[Definition 3.3]{SSch})
is also satisfied.
\vskip .3cm
(iv) The stable model structure on $Spectra(\C)$ is weakly finitely generated when the given model structure
on $\C$ is and one starts with the projective unstable model structure on $Spectra(\C)$.
\end{proposition}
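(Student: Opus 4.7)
The overall strategy is to realize the stable structure as a left Bousfield localization of the level (projective or injective) model structure on $Spectra(\C)$ at the set $\mathbf{S}$ from \eqref{S}, and then to transport each claimed property across the localization. For (i), I would invoke Hirschhorn's existence theorem for left Bousfield localizations of left proper cellular model categories (for the projective version, where cellularity of $\C$ was verified in Corollary~\ref{level.model.1}), together with Smith's analogue for combinatorial model categories (which applies to both the projective and injective levelwise structures by Corollary~\ref{level.model.1} and Proposition~\ref{inj.model.struct}). Left properness and local presentability are preserved by left Bousfield localization, and the class of generating trivial cofibrations is obtained by adjoining to $\oJ_{Sp}$ a set of horn-like maps built from $\mathbf{S}$ using the simplicial mapping cylinder; cofibrant generation and cellularity (in the projective case) follow. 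Tractability transfers since the new generating trivial cofibrations have cofibrant domains (their domains are built from cofibrant objects in $\C$ via $\F_{T_V}$, which preserves cofibrancy).

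For (ii), the fibrant objects in a left Bousfield localization at $\mathbf{S}$ are exactly the objects that are (a) fibrant in the underlying levelwise model structure, and (b) $\mathbf{S}$-local, i.e.\ have the property that $\mathrm{Map}(f,\chi)$ is a weak equivalence of simplicial sets for every $f\in \mathbf{S}$. Using the adjunctions $\F_{T_V}\dashv {\mathcal E}val_{T_V}$ and $\wedge\dashv \Hom_{\C}$, the map $\mathrm{Map}(\F_{T_V}(C),\chi)\to \mathrm{Map}(\F_{T_V\wedge T_W}(C\wedge T_W),\chi)$ becomes identified with
\[
\mathrm{Map}\bigl(C,\chi(T_V)\bigr)\longrightarrow \mathrm{Map}\bigl(C,\Hom_{\C}(T_W,\chi(T_V\wedge T_W))\bigr),
\]
for $C$ ranging over the (co)domains of $\oI$. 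Since these $C$ detect weak equivalences of fibrant objects in $\C$ (by \cite[Proposition 3.2]{Hov-3}-type arguments using that every object is a filtered homotopy colimit of such $C$), being $\mathbf{S}$-local is equivalent to the structure maps $\chi(T_V)\to \Hom_{\C}(T_W,\chi(T_V\wedge T_W))$ being weak equivalences in $\C$ for all $T_V,T_W\in \C_0'$, i.e.\ to $\chi$ being an $\Omega$-spectrum.

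For (iii), the pushout-product axiom in the level model structures was established in Corollary~\ref{level.model.1}(v) and Proposition~\ref{inj.model.struct}, so it remains to check that smashing a stable trivial cofibration with an arbitrary cofibrant object produces a stable equivalence. The standard way, following \cite[Theorem 6.3]{Hov-3}, is to verify that for each $f\in \mathbf{S}$ and each generating cofibration $g$ of the level structure, the pushout-product $f\square g$ is a stable equivalence; by the explicit form of $\mathbf{S}$ and the fact that $\F_{T_V}$ is a left Quillen functor compatible with the smash product (a Day convolution computation shows $\F_{T_V}(A)\wedge \F_{T_W}(B)\cong \F_{T_V\wedge T_W}(A\wedge B)$), this reduces to the pushout-product axiom already established in $\C$. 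Cofibrancy of the unit in the injective stable structure follows from Proposition~\ref{inj.model.struct}, since the unit is cofibrant in the injective level structure and localization does not change cofibrations. The monoidal axiom in the injective stable case follows by the same Day convolution reduction applied to the generating trivial cofibrations, using that every object of $\C$ is cofibrant under our standing hypotheses. This step is the technical heart of the argument, since one must carefully verify that the pushout-products of the localizing maps with cofibrations remain in the class of stable trivial cofibrations rather than merely in stable equivalences.

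Finally, for (iv), weak finite generation is inherited by the projective stable structure provided that the domains and codomains of the set of generating trivial cofibrations (augmented by the cylinder maps built from $\mathbf{S}$) are finitely presented, and that a suitable class of ``almost fibrant'' objects is detected by them. Finite presentability of the new generators follows because they are built from $\F_{T_V}$ applied to finitely presented objects of $\C$ (together with finitely presented $T_W$), and $\F_{T_V}$ preserves finite presentability by the same small object argument as in Proposition~\ref{key.props.unstable}(i); the characterization of $\Omega$-spectra in (ii) combined with the weak finite generation of $\C$ provides the required detection property, following \cite[Theorem 4.2]{Dund1}.
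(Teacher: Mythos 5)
Your proposal is correct and follows the same overall decomposition as the paper's: realize the stable structure as a left Bousfield localization of the level projective (resp.\ injective) model structure at the set $\mathbf{S}$, then transport each property across the localization.  Parts (i), (ii) and (iv) are handled essentially as in the paper: (i) invoking Hirschhorn for the cellular left proper case and Smith/Barwick for the combinatorial case, (ii) the standard identification of $\mathbf{S}$-local fibrant objects with $\Omega$-spectra via the adjunctions $\F_{T_V}\dashv {\mathcal E}val_{T_V}$ and $\wedge\dashv\Hom_{\C}$ (which the paper just declares ``clear''), and (iv) noting the stable generators are built from finitely presented pieces.  Where you genuinely diverge is in (iii): you route the pushout-product axiom through the criterion in \cite[Theorem 6.3]{Hov-3}, i.e.\ you verify that $f\square g$ is a stable equivalence for $f\in\mathbf{S}$ and $g$ a level generating cofibration, using a Day-convolution identity $\F_{T_V}(A)\wedge\F_{T_W}(B)\cong\F_{T_V\wedge T_W}(A\wedge B)$ to reduce to the monoidal axiom already known in $\C$.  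The paper instead cites \cite[Proposition 3.19]{Bar} and checks a homotopy-cartesian square of derived mapping spectra $R\Hom(Y,\Hom(B,Z))$ etc., the key point being that $\Hom(A,Z)$ and $\Hom(B,Z)$ are already stably fibrant when $A,B$ are cofibrant and $Z$ is stably fibrant, so both horizontal arrows in the square are weak equivalences.  Your Hovey-style argument is more elementary and keeps the verification entirely on the side of (trivial) cofibrations; Barwick's criterion, used by the paper, is better adapted to the enriched framework and avoids the explicit Day-convolution computation, but it silently relies on the fact that the internal hom out of a cofibrant object into a stably fibrant object is again stably fibrant (a fact that itself amounts to the $\Omega$-spectrum property being preserved).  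Both routes are standard and give the same conclusion.
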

\begin{proof} The proof of the first statement in (i) is entirely similar to the proof of \cite[Theorem 3.4]{Hov-3} and is 
therefore skipped. If $\C$ is left proper, so are both the projective and injective unstable model structures on $Spectra (\C)$ 
as proved above. In case $\C$ is cellular, it was shown above that the projective unstable model structure is also cellular. 
It is shown in \cite[Proposition 3.4.4 and Theorem 4.1.1]{Hirsch} that then the localization of the unstable model structures preserves
these properties. The fact it is locally presentable and hence combinatorial follows from the corresponding property of the
unstable model categories: see \cite[Theorem 3.18]{Bar}. Our hypotheses show that the domains of the maps in $I$ and $J$ are cofibrant. These prove all
the statements in (i).
\vskip .3cm
(ii) is clear. 
Clearly, since the monoidal structure is the same as in the unstable setting, the category $Spectra(\C)$ is symmetric monoidal. Now to
prove it is a monoidal model category, it suffices to prove that the pushout-product axiom holds. This may be proven exactly as in the proof of 
\cite[Proposition 3.19]{Bar}. i.e. It suffices to prove the following: Let $i:X \ra Y$ denote a trivial cofibration in the above localized
model category of spectra and let $f:A \ra B$ denote a generating cofibration in the given model structure on $Spectra(\C)$. Then it suffices to
show that the pushout-product $i \square f$ is a weak-equivalence. This will hold, if for any fibrant object $Z$ in the localized category of
spectra, the diagram
\[\xymatrix{{R\Hom(Y, Hom(B, Z))} \ar@<1ex>[r] \ar@<1ex>[d] & {R\Hom(X, \Hom(B, Z))} \ar@<1ex>[d] \\
{R\Hom(Y, Hom(A, Z))} \ar@<1ex>[r]  & {R\Hom(X, \Hom(A, Z))}}\]
is homotopy cartesian, where $\Hom$ denotes the internal hom in the category $Spectra(\C)$ and $\RHom$ denotes its obvious 
derived functor. Now observe that the sources and targets of the generating cofibrations are themselves cofibrant in the projective model
structure (and trivially so in the injective model structure) on $Spectra (\C)$. Therefore, both $\Hom(A, Z)$ and $\Hom(B, Z)$ are fibrant objects
in the localized model structures, which proves both the horizontal maps are weak-equivalences. Therefore, the above square is homotopy cartesian.
\vskip .3cm
It is clear that the unit is cofibrant in the injective model structure. Moreover, since an enriched functor 
$F:\C_0' \ra \C$ is cofibrant if ${F}(T_V)$ is cofibrant in $\C$ for every $T_V \eps \C_0'$ and every object of $\C$ is assumed to be 
cofibrant, it follows that
every such functor is cofibrant in the injective stable model structure. Therefore, the monoidal axiom is also satisfied. This proves (iii).
\vskip .3cm
 The last statement is obvious from the corresponding properties of the 
projective model structure.
 \end{proof}   

\vskip .3cm
\subsubsection{\bf Alternate stabilization for symmetric spectra}
\label{alt.stab.symm.sp}
As we observed earlier (see ~\ref{main.eg}(iv)), when the group $\group$ is trivial, the categories of spectra we obtain 
include the category of
symmetric spectra. If $T$ denotes the starting sphere, then one may obtain a fibrant spectrum by the usual process.
i.e. Let $\X =\{\X_n = \X(T^{\wedge ^n})|n \ge 0\}$ denote a spectrum. Then one may define the symmetric spectrum $R\X$
by $R\X_n = \colimk \Omega_T^k(\X(T^{\wedge ^{n+k}}))$ with the induced action of the symmetric group $\Sigma _n$.
Then one may show readily that $R\X = \{R\X_n|n \ge0\}$ is a fibrant symmetric spectrum.
\vskip .3cm

\subsubsection{\bf An alternate construction of the category of spectra}
\label{spectra.construct.2}
In the construction of the stable category of spectra considered above, we start with
unstable category of pointed simplicial presheaves provided with the object-wise model structure, then localize this to 
invert ${\mathbb A}^1$-equivalences,
 then stabilize to obtain the category of spectra with the unstable projective or injective model structures which are then 
localized to obtain the category of spectra with the corresponding stable model structures. 
Instead of these steps,  one may adopt the
following alternate series of steps to construct the category of spectra with its stable model
structure, where all the localization is carried out at the end. 
\vskip .3cm
One starts with one of the categories of pointed simplicial presheaves $\C= \PSh^{\O^{o}_{\group}}$ or $\PSh/S^{\O^{o}_{\group}}$
 with the  object-wise model structures in general. Next we 
apply the stabilization construction as discussed above and obtain $Spectra(\C)$ with its {\it unstable}
 projective or injective model structure considered above. 
This is left proper and a tractable model category which is cellular and weakly finitely generated in the projective case. 
Assume first that $? = Nis$. One defines a presheaf $P$ in any of the above categories to be 
{\it motivically fibrant} if 
\begin{enumerate}[\rm(i)]
 \item 
 $P$ is fibrant in $Spectra(\C)$, 
\item
$\Gamma (\phi, P) $ is contractible (where $\phi$ denotes the empty 
scheme), 
\item sends an {\it elementary 
distinguished square} as in \cite{MV} to a homotopy cartesian square on taking sections and 
\item the obvious pull-back 
$\Gamma (U, P) \ra \Gamma (U_+ \wedge {\mathbb A}^1_+, P)$ is a weak-equivalence.
\end{enumerate}
 Then a map 
$f: A \ra B$ 
 in  $Spectra(\C)$ is a {\it motivic weak-equivalence} if the induced map $Map(f, P)$ is
a weak-equivalence for every motivically fibrant object $P$, with $Map$ denoting the simplicial
mapping space.
\vskip .3cm
Next we localize this by inverting maps that belong to any one of the following
classes: (i) $f$ is a motivic weak-equivalence  and (ii) 
$\{{\mathcal F}_{T_V \wedge T_W}(T_V \wedge C) \ra {\mathcal F}_{T_W}(C) \mid C \eps
 \mbox{ Domains and Co-domains of } \oI, T_V, T_W \eps \C_0' \}$. (Here $\oI$ is the set of generating cofibrations of $\C$.)
 The cofibrations in the localized
model structure will be the same as those of
$Spectra (\C)$. The fibrations 
are defined by the lifting property. By \cite[Theorem 4.1.2]{Hirsch}, it follows
that this is a left-proper model category which is cellular when one sticks with the projective model structures throughout. The resulting stable model structure identifies with
the stable model structures on motivic spectra and $\T$-motivic spectra obtained above.
\vskip 3cm
One may in fact carry out this process in two stages. First one only inverts maps 
of the following form: $\{{\mathcal F}_{T_V \wedge T_W}(T_V \wedge C) \ra {\mathcal F}_{T_W}(C) \mid C \eps
 \mbox{ Domains and Co-domains of } \oI, T_V, T_W \eps \C_0' \}$. (Here $\oI$ is the set of generating cofibrations of $\C$.
This produces a stable model structure on $Spectra (\C)$ where the fibrant objects are $\Omega$-spectra. This stable model
category will be denoted $Spectra_{st}(\C)$. Then one inverts the maps $f$ which are motivic weak-equivalences to obtain
the ${\mathbb A}^1$-localized stable category of spectra.

One obtains a 
similar alternate construction of 
\'etale spectra as well.  
\subsection{\bf Key properties of $\Spt_{ mot}^{\group}$, $\Spt_{et}^{\group}$, $\Spt/S_{mot}^{\group}$ and 
$\Spt/S_{et}^{\group}$.}
\label{keyprops.spectra}
We summarize the following properties which have already been established above.
\label{key.props.mot.stab.cat}
\begin{enumerate}[\rm(i)]
\item{{\it Weak-equivalences and fibration sequences.} Suppose $A$ and $B$ are $\Omega$-spectra. Then a map $f: A \ra B$ in any one of the above
 categories of $\group$-equivariant spectra is a weak-equivalence if and only if the induced map
 $f(T_V)(\group/H):A(T_V)(\group/H) \ra B(T_V)(\group/H)$ is a weak-equivalence of pointed simplicial sets for all subgroups $H \eps \W$ and all $T_V$. 
A diagram $F \ra E \ra B $ of $\Omega$-spectra is a fibration sequence in 
any one of the above categories
of equivariant spectra only if the induced diagrams 
\be \begin{equation}
     \label{fib.seqs}
 F(T_V)(\group/H) \ra E(T_V) (\group/H) \ra B(T_V)(\group/H)  
\end{equation} \ee
\vskip .3cm \noindent
 are all fibration sequences of spectra for all subgroups $H \eps \W$  and all $ T_V$. 
These follow readily from the basic properties of left Bousfield localization: see \cite[Proposition 3.4.8 and Theorem 4.3.6]{Hirsch}. }
 \item{{\it Stably fibrant objects}. A spectrum $E$
is fibrant in the above stable
model structure if and only if each $E(T_{V})$ is fibrant in $\C$= the appropriate unstable category of simplicial
presheaves and  the induced map
$E(T_V) \ra \Hom_{\C}(T_W, (E(T_V\wedge T_W)))$ is a weak-equivalence of $\group$-equivariant  spaces
for all $T_V, T_W \eps \C_0'$.}
\item{{\it Finite sums}. Given a finite collection $\{E_{\alpha}|\alpha \}$ of
objects, the finite sum ${\rm V}_{\alpha} E_{\alpha} $
identifies with the
product $\Pi_{\alpha} E_{\alpha}$ up to stable weak-equivalence.}
\item{{\it Additive structure}. The corresponding stable homotopy categories  are additive
categories.} 
\item{{\it Shifts}. Each $T_V \eps \C_0'$ defines  a shift-functor $E \ra E[T_V]$, where $E[T_V](T_W) =
E(T_V \wedge T_W)$. This is an 
automorphism of  the corresponding stable homotopy category.}
\item{{\it Combinatorial (Tractable) left proper simplicial model category structure}. All of the above
categories of spectra  have the structure of {\it combinatorial (in fact tractable) left proper simplicial model categories}.}
\item{{\it Symmetric monoidal model structure}. There is a symmetric monoidal model
structure on all the above categories of spectra, 
 where the product is
denoted $\wedge$, i.e. the pushout-product and monoid axioms are satisfied. The  sphere spectrum, (i.e. the inclusion of $\C_0'$ into $\C$) is the unit in this
symmetric monoidal structure. In the stable injective model structure, the sphere spectrum is cofibrant.
Given a fixed spectrum $E$, the functor $F \mapsto E \wedge F$ has a right adjoint which
will be denoted $\Hom$. This is the internal hom in the above categories of spectra. 
The derived functor of this $\Hom$ denoted $\RHom$ may be defined as follows.
$\RHom(F, E) = \Hom(C(F), Q(E))$, where $C(F)$ is a cofibrant replacement of $F$
and $Q(E)$ is a fibrant replacement of $E$.}
\item{ {\it Ring spectra}. Algebras in the above categories of spectra will be called 
{\it $\group$-equivariant ring spectra} (and {\it ring-spectra} for short.)}
\item{{\it Change of groups}. Let $\phi:\group' \ra \group$ denote a homomorphism and let $\W'$ ($\W$) be a chosen collection of subgroups of $\group'$ ($\group$) so that
they satisfy the hypotheses in ~\ref{cont.action} and so that $\phi^{-1}(H) \eps \W'$ for all $H \eps \W$.Then restricting the group action from $\group$ to
 $\group'$ along the homomorphism $\phi$ defines the restriction functor sending categories of $\group$-equivariant spectra to $\group'$-equivariant spectra.
In special cases, the restriction functor will have an adjoint called induction. This will be explored elsewhere.}
\end{enumerate}  
\begin{definition} (Stable homotopy groups).  
Let  $E$ denote a fibrant spectrum in any of the above categories of spectra
and let $H$ denote any subgroup of 
$\group$ so that $ H \eps \W$ and so that $H$ acts trivially on the affine space $V$ on which $\group$ acts linearly. 
Recall that since $E$ is fibrant, the obvious map $E(T_V) \ra \Omega_{T_W}(E(T_V \wedge T_W))=
\Hom_{\C}(T_W, E(T_V \wedge T_W))$
is a weak-equivalence. Therefore, the $k$-th iteration of  the above map,  $E(T_V) \ra 
\Omega^k_{T_W}(E(T_V \wedge T_W^{\wedge ^k}))$ is also a weak-equivalence.
 Therefore, 
 we observe for $U \eps \bS$ or $U \eps {\rm {Sm/ S}}$ with a continuous $\group$-action (i.e. where $U$ is stabilized by some
$H \eps \W$)
\vskip .3cm
 $\pi_{s+2|T_V|, T_V}^H(E(U)) = [ \Sigma ^{s+2|T_V|, T_V}  U_+, E^H_{|U}] \cong
\colimn [T_V \wedge T_{W}^{\wedge ^n} \wedge S^s \wedge U_+,  E^H_{|U}( T_W^{\wedge ^n})]$.
\vskip .3cm \noindent
Here $[A, B]$ denotes $Hom$ in the homotopy category associated to the corresponding category of $\group$-equivariant spectra
 and where $H$ also acts trivially on $W$. We have used the notation $E^H$ for $E(\group/H)$.
\vskip .3cm
Next assume that $\group$  denotes a profinite group and 
let $H_{\group}$ denote the largest normal subgroup of $\group$ contained in $H$. (This is
 the {\it core} of $H$ we considered earlier and has finite index in $\group$.) Then, for $U \eps \bS$ or $U \eps {\rm {Sm/ S}}$ with a continuous $\group$-action, we let:
\vskip .3cm
$\H^{s+2|T_V|, T_V, H}(U_+, E) = \Hom_{Sp, \group/H_{\group}}( \Sigma ^{s+2|T_V|, T_V}  U_+, E^{H_{\group}}_{|U})=  $
\vskip .3cm
$ \cong \colimn \Hom_{Sp, , \group/H_{\group}} (T_V \wedge T_{W}^{\wedge ^n} \wedge S^s \wedge U_+,   E^{H_{\group}}_{|U}( T_W^{\wedge ^n}))$
\vskip .3cm \noindent
where $\Hom_{Sp}$
denotes the internal hom-functor defined in ~\ref{mapping.spectra} for $\group/H_{\group}$-equivariant 
objects and $?$ denotes either the Nisnevich or the \'etale sites. We will use  the notation
\[\H^{s+2|T_V|, T_V, H}_{mot}(U_+, E) (\H^{s+2|T_V|, T_V, H}_{et}(U_+, E))\] 
to denote
the corresponding hypercohomology when $E \eps \Spt_{S, mot}^{\group}, \Spt/S_{mot}^{\group}$ ($\Spt_{S,et}^{\group}, 
\Spt/S_{et}^{\group}$, \res). 
\end{definition}
\vskip .3cm
Observe that if we restrict to ${\mathbb P}^1$-motivic spectra, then a general 
$T_W \eps \C_0'$ is a finite iterated smash product of terms of the form 
${\underset {\group/K} \wedge} {\mathbb P}^1$, for some normal subgroup $K$ of $\group$ with finite 
index. When the group actions are ignored (or trivial), a general $T_W \eps \C_0'$ is a finite
 iterated smash product of ${\mathbb P}^1$, so that in this case the above stable homotopy groups
may be just indexed by two integers. 

\subsection{Assorted results on spectra}
In this subsection, we will briefly consider several useful constructions and results on spectra and pre-spectra.
\subsubsection{\bf Spectra indexed by the non-negative integers} 
\label{usual.spectra}
One may construct spectra indexed by the non-negative integers from 
 $Spectra(\C)$ as follows. Let $T_{V}$ denote a fixed element of $\C'_0$. 
 One may now consider the full symmetric monoidal subcategory
of $\C'_0$ generated by $T_V$ under iterated smash products along with the unit $S^0$. Any
spectrum ${\mathcal X}$ in $Spectra (\C)$ may be restricted to this sub-category and provides
a spectrum ${\overline {\mathcal X}}$ in the usual sense by defining ${\overline {\mathcal X}}_n
= {\mathcal X}(\wedge ^{n} T_V)$.  It is, however, important to observe that for the
full subcategory $\C_0'$ to be symmetric monoidal, $\Hom_{\C_0'}(T_V^n, T_V^{n+k})$ is not just $T_V^k$,
but something bigger. 
\vskip .3cm
For example, when $\group$ is trivial, this category corresponds to that of {\it symmetric
spectra with values in $\C$} as \cite[2.6]{Dund1}, so that $\Hom_{\C_0'}(T_V^n, T_V^{n+k}) = \vee_{\alpha \eps Inj(n, n+k)}T_V^k$
where $Inj(n, n+k)$ denotes the set of all injective maps $n \ra n+k$.  In particular,
the symmetric monoidal subcategory generated by the spheres $T_V$s under iterated smash products already contains
the symmetric spheres $\vee_{\alpha \eps Inj(k, k)}T_V^k$.
\vskip .3cm
Similar observations apply to pre-spectra as well.
Given any pre-spectrum ${\mathcal X}$, one obtains this way, the pre-spectrum 
${\overline {\mathcal X}}$ indexed the non-negative integers where
 ${\overline {\mathcal X}}_n = {\mathcal X}(T_V^{\wedge ^n})$.
\vskip .3cm
\begin{examples} {\bf Suspension spectra}
\label{sus.spectra}
(i) Let $\C'_0=\C'=\{T_V\}$ as $V$ varies among all continuous representations of $\group$ (i.e. where $V$ is stabilized by some $H \eps \W$)
 and let
 $ A \eps \C$. Then the {\it motivic suspension spectrum associated to} $A$ is the spectrum
$\Sigma _{mot}(A)$ whose value on $T_V \eps \C_0'$ is given by $T_V \wedge A$. 
If $\T$ is as in Example ~\ref{main.eg}(ii), 
the {\it $\T$-motivic suspension spectrum} associated to $A$ is is the spectrum whose value on 
$T_K \eps \C_0'$ is given by
$T_K \wedge A$. This will be denoted $\Sigma _{\T}(A)$. 
If the subgroup $K$ of $\group$ is fixed, we will denote the corresponding suspension spectrum by
$\Sigma _{\T, K}(A)$. 
If we take $A=S^0$, we obtain the
 {\it motivic sphere spectrum} $\Sigma_{mot}$ and the {\it $\T$-motivic sphere spectrum}
 $\Sigma _{\T}$. In case $F: \C \ra \C$ is a functor commuting with $\wedge$ and with
colimits, then we obtain the motivic $F$-spectrum $\Sigma _{mot, F}$ and the $F(\T)$-motivic
 sphere spectrum $\Sigma _{F(\T)}$. 
\vskip .3cm
(ii)  If we take $A= S^n \wedge B$, for some $n \ge 1$ and
$B \eps \C$ (as above), the resulting spectra are the $S^n$-suspensions of the above suspension
spectra associated to $A$. This will be denoted $\Sigma^{n,0}_{mot}B$. 
If we take $A= T_V \wedge S^s \wedge B$, $T_V \eps \C_0'$, the resulting $\T$-motivic suspension spectrum 
associated to $A$ will be denoted $\Sigma ^{s+2|T_V|, T_V}B$, where $|T_V| = dim(V)$.
\vskip .3cm
If we take 
$A = (\T)^n \wedge B$ for some $B \eps \C$, the resulting $\T$-motivic
 suspension spectrum associated to $A$ will be the $(\T)^n$-suspension of the $\T$ -motivic
suspension spectrum associated to $B$. This will be denoted $\Sigma ^{2n, n}_{\T}(B)$.
In case $A = S^n \wedge (\T)^m \wedge B$ for some $B \eps \C$, the resulting 
$\T$-motivic suspension spectrum will be denoted $\Sigma^{n+2m, m}_{\T}B$.
\end{examples}
\subsubsection{\bf Suspending and de-suspending spectra}
Let $E$ denote a spectrum in any one of the above categories, with $T_V$ denoting elements of 
$\C'_0$. Then we define the {\it suspension} $\Sigma ^{2|T_V|, T_V}E$ to be the spectrum defined by
$\Sigma ^{2|T_V|, T_V}E(T_U) = E(T_V \wedge T_U)$. The {\it de-suspension }
$\Sigma ^{-2|T_V|, -T_V}E$ will be the spectrum defined by
$\Sigma ^{-2|T_V|, -T_V}E(T_W) = \Hom_{\C}(T_V, E(T_W))$.
\vskip .3cm
\subsubsection{\bf Equivariant vs. non-equivariant spectra}
\label{equiv.nonequiv}
The equivariant spectra will denote spectra as in Example ~\ref{main.eg}(i) and Example ~\ref{main.eg}(ii).
If $\T$ denotes a fixed object in $\C_0'$ as in ~\ref{main.eg}(ii), $\Sigma_{\T, \group}$ will denote the {\it $\group$-equivariant sphere spectrum} associated to
$\T$. The non-equivariant spectra will denote the $\group$-spectra when the group $\group$ is the trivial group.
$\Sigma_{\T}$ will denote {\it the corresponding (non-equivariant) sphere spectrum} associated to
$\T$. When we only consider equivariant spectra, $\Sigma_{\T}$ will
also be used to denote $\Sigma_{\T, \group}$.
\vskip .3cm
It is important to observe that the equivariant  spectrum $\Sigma_{\T, \group}$
 is a module spectrum over the non-equivariant  spectrum $\Sigma_{\T}$.
\subsubsection{\bf Mapping spectra} 
\label{mapping.spectra}
This can be defined as right adjoint to the smash product and will be an
internal Hom in the category of spectra. This will be denoted simply by $\Hom $.  We leave a more
explicit formula for this to the reader. 

\subsection{\bf Eilenberg-Maclane and  Abelian group spectra}
It is shown in \cite[Example 3.4]{Dund2}, how to interpret the motivic complexes, 
${\mathbb Z} =\oplus _{r \ge 0} {\mathbb Z}(r)$ and ${\mathbb Z}/\ell= 
\oplus _{r \ge 0} {\mathbb Z}/\ell(r)$ as ring-spectra in the above stable 
motivic homotopy category. While this is discussed there only in the non-equivariant
framework, the constructions there extend verbatim to the $\group$-equivariant setting.
\vskip .3cm
This spectrum will be denoted $H^{\group}({\mathbb Z})$. The generalized
cohomology with respect to it in the sense defined above, will define $\group$-equivariant motivic
cohomology. $H^{\group}({\mathbb Z}/\ell)$ will denote the mod$-l$
variant for each prime $\ell$. 
\vskip .3cm 
With $\C$ denoting 
$\PSh_*^{{\mathcal O}_{\group}^o}$ ($\PSh/S^{{\mathcal O}_{\group}^o}$), we let 
$\C_{Ab}$ denote the Abelian group objects
in $\C$ {\it with transfers}. (Observe that these are nothing but  diagrams of simplicial Abelian presheaves with transfers
indexed by the orbit category ${\mathcal O}^o_{\group}$. )  One observes as in \cite[section 2]{RO}
that the category $\C_{Ab}$ has tensor structure defined by $\otimes^{tr}$ (which is different from the usual
tensor product).
\vskip .3cm
Let $\C_0'$ denote the subcategory $(Thm.sps)^{{\mathcal O}_{\group}^o}$ considered in Examples ~\ref{main.eg}(i).
Let $\C_{0, tr}'$ denote the $\C_{Ab}$-enriched category with the same objects as $\C_0'$, but where the $\C_{Ab}$-enriched
hom is given by 
\[Z_{tr}(\Hom_{\C}( \quad, \quad )).\] 
 Then we define an Abelian group spectrum
$A$ to be a $\C_{Ab}$-enriched functor $A: \C_{0, tr}' \ra \C_{Ab}$. Observe that this means for each $T_V, T_U \eps \C_{0, tr}'$, one is provided with
 maps 
\[ Z_{tr}(\Hom_{\C}(T_U,  T_V)) \otimes^{tr} A(T_U) \ra A( T_V)\]
 and these are compatible as $U$ and $V$ vary 
in $\C_{0, tr}'$.  (Here the functor $Z_{tr}$ is the same functor denoted ${\mathbb Z}^{tr}$ in \cite[section 2]{RO}.)
\vskip .3cm
If $\C = \PSh_{*,mot}^{\group}$, $\PSh/S_{mot}^{\group}$ 
($\C= \PSh_{*,des}^{\group}$, $\PSh/S_{des}^{\group}$) the corresponding category of abelian group spectra 
will be denoted $\Spt_{ mot}^{Ab, \group}$, $\Spt/S_{mot}^{Ab, \group}$ ($\Spt_{ et}^{Ab, \group}$, $\Spt/S_{et}^{Ab, \group}$, \res). 
The objects
in this category  will be called {\bf motivic 
Abelian group spectra} ({\bf \'etale Abelian group spectra}, \res). 
One may now define the ${\mathbb P}^1$-suspension spectrum associated to  the motive ${\mathcal M}_{\group}(X)$ as an
object in the above  category of Abelian group spectra.  This will be denoted $\Sigma _{{\mathbb P}^1}
({\mathcal M}_{\group}(X))$. 
\vskip .3cm
Next we proceed to consider spectra in the category of $Z/\ell$-vector spaces, with no requirement that they have transfers
as in the discussion above. Such spectra are often needed, especially in connection with completions as in the next section.
\subsubsection{Spectra of $Z/\ell$-vector spaces}
\be \begin{enumerate}[\rm(i) ]
\label{Zlspectra}
\item{For a pointed simplicial presheaf $P \eps \PSh_*$ or $P \eps \PSh/S$, $Z/\ell(P)$ will
denote the presheaf of simplicial $Z/\ell$-vector spaces defined in \cite[Chapter,
2.1]{B-K}.
Observe that the presheaves of homotopy  groups of $Z/\ell(P)$ identify with the
reduced homology presheaves of $P$ with respect to the ring $Z/\ell$. Hence these
are all $\ell$-primary torsion. The functoriality of this construction shows that if $P$ has
an action by the  group $\group$, then $Z/\ell(P)$ inherits this action. Moreover, if the action by
$\group$ on $P$ is continuous, then so is the induced action on $Z/\ell(P)$. Moreover one applies the functor
$Z/\ell$ to the category $\C$ to obtain a $Z/\ell$-linear category: $Z/\ell(\C)$. i.e. $\Hom_{Z/\ell(\C)}(Z/\ell(A), Z/\ell(B)) =
Z/\ell(\Hom_{\C}(A, B)$.}
\item{Next one extends the construction in the previous step to spectra.
Let $\C=\PSh_*^{\O^{o}_{\group}}$ or $\PSh/S^{\O^{o}_{\group}}$ and let
$E \eps Spectra(\C)$. Then first one
applies
the functor $(\quad ) \mapsto Z/\ell(\quad)$ to each $E(T_W)$, $T_W \eps \C_0'$. Then there exist
natural maps $ Z/\ell(\Hom_{\C}(T_V,  T_W)) {\underset {Z/\ell} \otimes} Z/\ell(E(T_V))
\ra Z/\ell(\Hom_{\C}(T_V,  T_W) \wedge E(T_W))$. Therefore, one may compose the above maps with the obvious
map $Z/\ell(\Hom_{\C}(T_V, T_W)\wedge E(T_W)) \ra Z/\ell(E( T_W))$ to define an object in $Spectra_{Z/\ell}(\C)$.}
\item{A pairing $M \wedge N \ra P$ in $\PSh_*^{{\mathcal
O}_{\group}^o}$ (or in $\PSh/S^{\O^{o}_{\group}}$) induces a pairing
 \[Z/\ell(M) \wedge Z/\ell(N) {\overset {\cong} \ra}  Z/\ell(M){\underset {Z/\ell} \otimes} Z/\ell(N) \ra Z/\ell(M \wedge N) \ra Z/\ell(P).\]
 Similarly 
a pairing $M \wedge N \ra P$ in $Spectra (\C)$ induces a similar 
pairing 
\[Z/\ell(M) \wedge Z/\ell(N) {\overset {\cong} \ra} Z/\ell(M) {\underset {Z/\ell} \otimes }Z/\ell(N) \ra Z/\ell(P). \]
(To see this, one needs to recall the construction of the smash-product of spectra from 
\cite[2.3]{Dund1}: first one takes the point-wise smash-product $M \wedge N: \C_0' \times \C_0' \ra \C$
and then one applies a left-Kan extension of this along the $\wedge: \C_0' \times \C_0' \ra \C_0'$.
The functor $Z/\ell$ commutes with this left-Kan extension.) 
This shows that if 
$R \eps Spectra (\C)$ is a ring spectrum so is $Z/\ell(R)$ and that if
 $M \eps Spectra(\C)$ 
is a module over the ring spectrum $R$, $Z/\ell(M)$ is a module object over the ring spectrum $Z/\ell(R)$.}
\item{If $\{f:A \ra B\} =\oJ '_{Sp}$ is a set of generating cofibrations (trivial cofibrations) for $Spectra (\C)$, then,
 we will let $\{Z/\ell(f): Z/\ell(A) \ra Z/\ell(B)|f \eps \oJ '_{Sp}\}$ be the set of generating  cofibrations 
(trivial cofibrations, \res) for
$Spectra_{Z/\ell}(\C) = Spectra(Z/\ell(\C))$. This defines a model structure on $Spectra_{Z/\ell}(\C)$: see 
\cite[Theorem 11.3.2]{Hirsch}. 
\vskip .3cm
Here is an outline of an argument to verify the hypotheses of the above theorem. Let $U$ be the 
forgetful functor right-adjoint to
the functor $Z/\ell(\quad )$. First it seems important that we use the object-wise model structures on 
$\C=\PSh_*^{\O^{o}_{\group}}, \PSh/S^{\O^{o}_{\group}}$ and on the corresponding unstable model categories of spectra so that all
objects are cofibrant and a map of spectra $\chi' \ra \chi$ is a cofibration if each map $\chi'(T_V) \ra \chi(T_V)$ is a 
monomorphism. 
\vskip .3cm
One may  verify that the functors $Z/\ell(\quad )$ and $U$ 
satisfy the hypotheses there, first unstably. For this, one may first observe that $U$ commutes
with transfinite colimits where the structure maps of the direct system are monomorphisms. 
The first hypothesis on the domains of the generating cofibrations
 (generating trivial cofibrations) being small with respect to the corresponding cells follows from
the adjunction between the functors $Z/\ell(\quad )$ and $U$ and the above observation. $U$ clearly preserves
monomorphisms which identify with the cofibrations since we are working with the object-wise model structure
unstably. One defines a map of spectra $\phi:\chi' \ra \chi$ in $Spectra_{Z/\ell}(\C)$ to be a fibration (weak-equivalence) if and only if
$U(\phi)$ is a fibration (weak-equivalence, \res) in $Spectra(\C)$. In particular, an object $\chi$ in $Spectra_{Z/\ell}(\C)$ is fibrant 
if and only if
$U(\chi) \eps Spectra(\C)$ is fibrant. 
\vskip .3cm
Next suppose $f: X' \ra X$ is a map in $Spectra(\C)$ that is a weak-equivalence. Then, for any fibrant object 
$\chi \eps Spectra_{Z/\ell}(\C)$ one obtains the commutative diagram with the vertical maps isomorphisms and the bottom map a weak-equivalence:
\vskip .3cm
\xymatrix{{Map(Z/\ell(X), \chi)} \ar@<1ex>[r] \ar@<1ex>[d]^{\cong} & {Map(Z/\ell(X'), \chi)} \ar@<1ex>[d]^{\cong} \\
{Map(X, U(\chi))} \ar@<1ex>[r]^{\simeq} &{Map(X', U(\chi'))} }
\vskip .3cm \noindent
Therefore, it follows that the functor $Z/\ell$ sends weak-equivalences to weak-equivalences. (A similar proof also works
to prove the corresponding statement for pointed simplicial  presheaves.)
It follows that $U$ sends any $Z/\ell(\oJ'_{Sp})$-cells to weak-equivalences.
(One may also prove  that $Z/\ell(\quad )$ preserves usual weak-equivalences of simplicial presheaves  from the Hurewicz theorem.)
 }
\end{enumerate} 
\vskip .3cm
 If 
\[\C= \PSh_{*,mot}^{\group}, \PSh/S_{mot}^{\group} (\C= \PSh_{*,des}^{\group}, \PSh/S_{des}^{\group})\]
 the corresponding category of spectra 
will be denoted $\Spt_{mot}^{Z/\ell, \group}, \Spt/S_{mot}^{Z/\ell, \group}$ 
($\Spt_{et}^{Z/\ell,\group}, \Spt/S_{et}^{Z/\ell, \group}$, \res). These categories are also locally presentable
 and hence the model
categories are combinatorial. The unstable and stable model structures on $\Spt_{mot}^{Z/\ell, \group}$
 are obtained by the process of ${\mathbb A}^1$-localization of $\Spt^{Z/\ell, \group}$ (as discussed earlier)
 and a similar description 
holds for the model categories $\Spt/S_{mot}^{Z/\ell, \group}$ 
($\Spt_{et}^{Z/\ell,\group}, \Spt/S_{et}^{Z/\ell, \group}$, \res).
One observes that the functor $Z/\ell(\quad )$
 induces a functor 
\[Z/\ell(\quad ): \Spt_{mot}^{\group} \ra \Spt_{mot}^{Z/\ell, \group},  \Spt/S_{mot}^{\group} \ra \Spt/S_{mot}^{Z/\ell, \group} \mbox{ and }\]
\[Z/\ell(\quad ): \Spt_{ et}^{\group} \ra \Spt_{ et}^{Z/\ell,\group}, Z/\ell(\quad ): \Spt/S_{ et}^{\group} \ra \Spt/S_{et}^{Z/\ell,\group}. \]
\vskip .3cm \noindent
The observations above already show that these are left-Quillen functors. 
\subsection{\bf ${\mathbb A}^1$-localization in the \'etale setting}
We will assume throughout this section that the base scheme $\B$ is the spectrum of a separably closed field $k$.
Recall that $\Spt_{et}^{\group}$ ($\Spt/S_{et}^{\group}$) denotes the category of spectra on the big \'etale site.
We will let $\Spt_{et} (\Spt/S_{et})$ denote the corresponding spectra with trivial action by the  group $\group$.
\vskip .3cm 
Observe that the \'etale homotopy type of affine-spaces over a separably closed
field $k$, are trivial when completed away from $char(k)$.
\begin{proposition} Assume the base scheme  $\B$ is a separably closed field $k$  and
$char(k) =p$. Let $E \eps \Spt_{et} (\Spt/S_{et})$ be a constant sheaf of spectra so
that all the (sheaves of) homotopy groups
$\pi_n(E) $ are $\ell$-primary torsion, for some $l \ne p$. Then $E$ is ${\mathbb
A}^1$-local in $\Spt_{et} \quad (\Spt/S_{et})$, i.e. the projection $P \wedge {\mathbb
A}^1_{ +} \ra P$ induces a weak-equivalence: $Map (P, E) \simeq Map (P
\wedge {\mathbb A}^1_{ +}, E)$, $P \eps \Spt_{et} \quad (\Spt/S_{et}, \res)$. 
\end{proposition}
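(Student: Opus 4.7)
The plan is to reduce the statement to a purely cohomological claim about $\mathbb{A}^1$ over a separably closed field, then invoke classical étale homotopy invariance for $\ell$-primary torsion coefficients.

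First I would use the adjunction between smashing with $\mathbb{A}^1_+$ and the internal Hom $\RHom(\mathbb{A}^1_+, -)$ in the symmetric monoidal model category $\Spt_{et}$ (see ~\ref{mapping.spectra}): the map $Map(P, E) \to Map(P \wedge \mathbb{A}^1_+, E)$ is a weak-equivalence for all cofibrant $P$ if and only if $E \to \RHom(\mathbb{A}^1_+, E)$ (induced by $\mathbb{A}^1_+ \to S^0$) is a stable equivalence. Since both sides are fibrant (the internal Hom out of a cofibrant object preserves fibrancy), it suffices to check this map on sections over each $U$ in the underlying site, i.e. to show that the restriction map $\tilde E(U) \to \tilde E(U \times \mathbb{A}^1)$ is a weak-equivalence of spectra, where $\tilde E$ denotes a fibrant replacement of $E$ in $\Spt_{et}$.

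Next, because $E$ is a constant sheaf of spectra, the sheafified homotopy group presheaves $\underline{\pi}_n(E)$ are constant abelian sheaves with value $\pi_n(E)$, which by hypothesis is $\ell$-primary torsion. A fibrant replacement in the local/étale model structure computes étale hypercohomology, so there is a strongly convergent descent spectral sequence
\begin{equation*}
E_2^{s,t} = H^s_{et}(U, \underline{\pi}_t(E)) \Longrightarrow \pi_{t-s}(\tilde E(U)),
\end{equation*}
and an analogous one for $U \times \mathbb{A}^1$. The projection $q: U \times \mathbb{A}^1 \to U$ induces a map of such spectral sequences. Since $k$ is separably closed and $\ell \neq \mathrm{char}(k)$, the classical homotopy invariance of étale cohomology with $\ell$-primary torsion constant coefficients gives an isomorphism $H^s_{et}(U, \Lambda) \xrightarrow{\cong} H^s_{et}(U \times \mathbb{A}^1, q^*\Lambda)$ for every such $\Lambda$. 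Applied to $\Lambda = \pi_t(E)$, this yields an isomorphism on $E_2$-pages, hence on abutments, which is precisely what is needed.

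The main obstacle will be handling the convergence of the descent spectral sequence when $E$ is not connective. I would address this by decomposing $E$ via its Postnikov tower $\{\tau_{\le n}E\}$, in which each stage is an iterated extension of Eilenberg–MacLane spectra with $\ell$-primary torsion coefficients, applying the argument above to each stage (where the spectral sequence degenerates into the ordinary étale cohomology of ${\mathbb A}^1$ over a separably closed field with $\ell$-torsion coefficients, which is well-known to satisfy homotopy invariance), and then taking a homotopy limit. One then has to justify that the homotopy limit commutes with restriction along $q$ up to weak-equivalence—this is standard since $\holim$ is computed sectionwise in $\Spt_{et}$. A secondary, more technical point is verifying that $\underline{\pi}_n(E)$ is genuinely a constant sheaf, which follows from the site having enough points and sheafification commuting with $\pi_n$ at the level of stalks.
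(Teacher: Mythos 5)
Your proposal is correct and takes essentially the same approach as the paper: reduce to the descent (Atiyah--Hirzebruch) spectral sequence comparing $H^s_{et}(U, \pi_t E)$ with $H^s_{et}(U \times \mathbb{A}^1, \pi_t E)$ and invoke classical homotopy invariance of \'etale cohomology with $\ell$-torsion coefficients over a separably closed field. The only minor difference is that the paper resolves the convergence issue you flag more directly, by noting that $X$ and $X \times \mathbb{A}^1$ have finite $\ell$-cohomological dimension (since $\ell \ne p$ and $k$ is separably closed), rather than passing through a Postnikov tower.
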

\begin{proof} First let $P$ denote the suspension spectrum associated to some
scheme $X$. Then $Map(P, E)$ ($Map (P \wedge {\mathbb A}^1_{ +},
E)$) identifies with the spectrum defining the generalized \'etale cohomology
of 
$X$ (of $X \times {\mathbb A}^1$, \res) with respect to the spectrum $E$.
(Observe that in the relative case, i.e. if $E \eps \Spt/S_{et}$, the affine line
${\mathbb A}^1$ denotes the affine line over $S$ and $\wedge$ then denotes $\wedge_S$.)
There exist Atiyah-Hirzebruch spectral sequences that converge to these
generalized \'etale cohomology groups with the $E_2^{s, t}$-terms being $H^s_{et}(X_+,
\pi_{-t}(E))$
and $H^s_{et}((X \times {\mathbb A}^1)_+, \pi_{-t}(E))$, \res. Since the
sheaves of homotopy groups $\pi_{-t}(E)$ are all $\ell$-torsion with $l \ne p$, and
$\B$ is assumed to be a separably closed field, $X$ and $X \times {\mathbb A}^1$
have finite $\ell$-cohomological dimension. Therefore these spectral sequences
converge strongly and the conclusion of the proposition holds in this
case. For a general simplicial presheaf $P$, one may find a simplicial
resolution 
where each term is a disjoint union of schemes as above (indexed by a small
set). Therefore, the conclusion of the proposition holds also for suspension
spectra
 of all simplicial schemes and therefore for all spectra $P$. \end{proof}
\begin{corollary} 
 Assume the base field $\B$ is a separably closed field $k$ and
$char(k) =p$. 
\vskip .3cm
Let $E \eps \Spt_{et}^{\group} (\Spt/S_{et}^{\group})$ be a constant sheaf of spectra so
that all the (sheaves of) homotopy groups
$\pi_n(E) $ are $\ell$-primary torsion, for some $l \ne p$. Then $E$ is ${\mathbb
A}^1$-local in $\Spt_{et}^{\group} (\Spt/S_{et}^{\group})$, i.e. the projection $P \wedge {\mathbb
A}^1_{+} \ra P$ induces a weak-equivalence: $Map _{\group}(P, E) \simeq Map_{\group} (P
\wedge {\mathbb A}^1_{+}, E)$ where $Map_{\group}$ is part of the simplicial structure on
$\Spt_{et}^{\group}$ \quad ($\Spt/S_{et}^{\group}$, \res).
\end{corollary}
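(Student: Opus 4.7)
The plan is to reduce the equivariant statement to the non-equivariant proposition by exploiting the orbit-category description of equivariant spectra and the basic adjunction $(\group/H)_+ \wedge (-) \dashv (-)^H$. Because $Map_{\group}$ is a simplicial mapping space in a simplicially enriched, combinatorial model category and $E$ is fibrant (constant $\ell$-torsion sheaves of spectra are stalkwise fibrant up to taking a fibrant replacement, which preserves the hypotheses), I will first reduce to checking that the map $Map_{\group}(P \wedge {\mathbb A}^1_+, E) \to Map_{\group}(P, E)$ is a weak-equivalence on a homotopy-generating set.

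First I would invoke Proposition \ref{Phi.Theta.props}(iv) (or \cite[Lemma 2.7, Proposition 2.8]{Dug}), which says that every object of $\Spt_{et}^{\group}$ (respectively $\Spt/S_{et}^{\group}$) is a homotopy colimit of cofibrant objects of the form $(\group/H)_+ \wedge h_U$ as $H$ varies in $\W$ and $U$ varies over objects of the site. Since $Map_{\group}(-, E)$ carries homotopy colimits in the first variable to homotopy limits, and since homotopy limits preserve weak-equivalences of fibrant objects, it suffices to check the ${\mathbb A}^1$-local condition when $P$ is a generator $(\group/H)_+ \wedge Q$ with $Q$ a cofibrant non-equivariant suspension spectrum.

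Next I would use the fundamental adjunction recorded in \ref{GmodH}: for any subgroup $H \in \W$, the functor $(\group/H)_+ \wedge (-) : \Spt_{et} \to \Spt_{et}^{\group}$ is left-adjoint to the fixed-point functor $(-)^H$. Derived on mapping spaces this gives, for $P = (\group/H)_+ \wedge Q$, a natural weak-equivalence
\begin{equation*}
Map_{\group}((\group/H)_+ \wedge Q, E) \simeq Map(Q, E^H),
\end{equation*}
and analogously with $Q$ replaced by $Q \wedge {\mathbb A}^1_+$ (note that the ${\mathbb A}^1$ on the equivariant side is endowed with trivial $\group$-action, so it commutes past the smash with $\group/H_+$). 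Consequently the corollary reduces to showing that $E^H$ is ${\mathbb A}^1$-local in $\Spt_{et}$ (or $\Spt/S_{et}$) for each $H \in \W$.

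The main (and only non-formal) step is then to verify that $E^H$ still satisfies the hypotheses of the preceding proposition. Since taking $H$-fixed points of a constant sheaf of spectra produces a constant sheaf (fixed points being a levelwise construction commuting with the constant-presheaf functor $C$ of \ref{simpl.sets}), $E^H$ is again constant. Moreover, because fixed points of an $\ell$-complete (or $\ell$-primary torsion) spectrum remain $\ell$-primary torsion in each homotopy group — which follows from the spectral sequence $H^s(H, \pi_{-t}(E)) \Rightarrow \pi_{-(s+t)}(E^H)$, whose $E_2$-terms remain $\ell$-primary torsion — the sheaves $\pi_n(E^H)$ are $\ell$-primary torsion with $\ell \neq p = \mathrm{char}(k)$. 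The preceding proposition then applies to $E^H$, and combining with the adjunction above completes the proof. The main obstacle is verifying that the relevant fibrant replacement in $\Spt_{et}^{\group}$ may be chosen compatibly so that the identification $Map_{\group}((\group/H)_+ \wedge Q, E) \simeq Map(Q, E^H)$ is genuinely a derived adjunction, but this follows from the fact that $(\group/H)_+ \wedge (-)$ is a left Quillen functor in the equivariant model structure established in Theorem \ref{eq.model.struct.1}.
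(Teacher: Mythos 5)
Your overall strategy agrees with the paper's: reduce to the case $P = (\group/H)_+ \wedge Q$ with $Q$ non-equivariant, use the adjunction $(\group/H)_+ \wedge (-) \dashv \mathrm{Eval}_{\group/H}$ (equivalently $(-)^H$ through the Quillen equivalence $\Phi/\Theta$), and then invoke the non-equivariant proposition. The paper effects the reduction by building a simplicial resolution from the triple associated to $\bar\Phi: \Spt_{et}^{\group} \to \prod_{H \in \W} \Spt_{et}$ and its left adjoint (with an extra degeneracy after applying $\bar\Phi$), whereas you reduce via the homotopy generating set from Proposition~\ref{Phi.Theta.props}(iv); these are essentially equivalent dressings of the same adjunction, so that step is fine.

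Where you diverge from the paper is in the final verification that $E^H$ again satisfies the hypotheses of the non-equivariant proposition, and that step contains a genuine gap. The spectral sequence you invoke, $H^s(H, \pi_{-t}(E)) \Rightarrow \pi_{-(s+t)}(E^H)$, is the \emph{homotopy} fixed-point spectral sequence; it computes $\pi_*(E^{hH})$, not $\pi_*(E^H)$ for the genuine (categorical) fixed points. In the orbit-category formulation used in the paper, $E^H$ (for $E \in \Spt_{et}^{\group,c}$) corresponds under $\Phi$ to the value $E(\group/H)$ of a diagram, which is simply one spot of the diagram and has nothing to do with group cohomology; the homotopy groups of a categorical fixed-point subspectrum are not computed by such a spectral sequence, and indeed need not be subgroups of $\pi_*(E)$. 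The paper sidesteps this entirely: in the diagram framework the hypothesis that $E$ is a constant sheaf with $\ell$-primary torsion sheaves of homotopy groups is read levelwise in the diagram, so each $E(\group/H)$ inherits both properties for free, and no spectral-sequence argument is required. You should replace the spectral-sequence appeal with the observation that in the $\mathcal{O}_\group^o$-diagram model the fixed points are merely evaluations, so the hypotheses hold for $E(\group/H)$ by assumption.
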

\begin{proof} We will only consider the case where $E \eps \Spt_{ et}^{\group}$. 
The last proposition shows that the conclusion is true were it not for the
 group action. The functor 
$\bar \Phi: P \mapsto \{P(\group/H) \mid H  \eps \W \}$
sending 
\[\Spt_{et}^{\group} \mbox{ to } {\underset {H \eps \W} \Pi} \Spt_{et}\]
\vskip .3cm \noindent
 has a left-adjoint defined
by sending $\{Q(\group/H) \mid  H \eps \W \}$ to 
$\bigvee Q(\group/H) \wedge (\group/H)_+$,  where $(\group/H)_+$ is the free diagram considered in
~\ref{diagram.cats}.This provides a triple, whereby one may find 
a simplicial resolution of a given $P \eps \Spt_{et}^{\group}$ by spectra of the form
 $Q \wedge (\group/H)_+$ where $Q \eps \Spt_{et}$.  To see this is a simplicial resolution, it suffices
to show this after applying $\bar \Phi$ in view of the fact that a map $f: A \ra B$ in 
$\Spt_{et}^{\group}$ is a weak-equivalence if and only if the induced maps $f(\group/H)$ are all weak-equivalences.
On applying $\bar \Phi$, the above augmented simplicial object will have an extra degeneracy.
\vskip .3cm
Therefore, one reduces to considering $P$ 
which are of the form $Q \wedge (\group/H)_+$. Then one obtains by adjunction, the following
weak-equivalences: $Map_{\group}(Q \wedge (\group/H)_+, E) \simeq Map(Q, E(\group/H))$ and
$ Map_{\group}(Q \wedge {\mathbb A}^1_+ \wedge (\group/H)_+, E) \simeq Map (Q \wedge {\mathbb A}^1_+, E(\group/H))$
where $Map$ is part of the simplicial structure on $\Spt_{et}$. One obtains the 
weak-equivalence $Map(Q, E(\group/H)) \simeq Map(Q \wedge {\mathbb A}^1_+, E(\group/H))$ by the last proposition.
This completes the proof of the corollary.
\end{proof}
\section{\bf Effect of ${\mathbb A}^1$-localization  on mod-$\ell$ completions of spectra}
\label{ring.module.completions}
Completions, especially at a prime $\ell$, play a key role in our work:  this is to be expected since even the
\'etale homotopy type of schemes has good properties only after completion away
from the residue characteristics. Therefore,
it is important to show that such completions may be carried out so as to be compatible with the process of
${\mathbb A}^1$-localization. We
will show how to adapt  the Bousfield-Kan completion
(see \cite{B-K}) to our framework as follows.
 \vskip .3cm
First recall that the Bousfield-Kan completion is defined with respect to 
a commutative ring with $1$: in our framework, this ring will be always $Z/\ell$
for a fixed prime $\ell$ different from the residue characteristics of the base-scheme $\B$. Therefore, we will work with
category $\Spt_{ mot}^{Z/\ell, \group}$. All our arguments and constructions work also on 
$\Spt/S_{mot}^{Z/\ell, \group}, \Spt_{et}^{Z/\ell, \group}$ and on $\Spt/S_{et}^{Z/\ell, \group}$ but we do not discuss these explicitly.
\vskip .3cm 
As observed above (see the discussion in ~\ref{Zlspectra})(iv), the functor $Z/\ell(\quad)$ does preserve ${\mathbb A}^1$-equivalences.
Recall a presheaf of spectra $P$ is motivically fibrant if it satisfies the conditions listed
in ~\ref{spectra.construct.2}: of these, most of the conditions there can easily be checked to be preserved
by the functor $Z/\ell(\quad)$, with one of the main issues being the condition that the presheaf
of spectra be one of $\Omega$-spectra or that it be fibrant in the unstable projective or 
injective model structure. One may circumvent this issue by redefining $Z/\ell(E)$
for a fibrant spectrum to be obtained by first taking $Z/\ell(E(T_V))$ for all the constituent spaces
and then replacing this by a fibrant spectrum. For the non-equivariant case, i.e. for symmetric spectra,
one may in fact use the projective unstable model structure for spectra. Then, as 
observed already  the usual stabilization process applies to obtain a fibrant replacement: 
see ~\ref{alt.stab.symm.sp}. Since the functor $\Omega_T$ and the filtered colimit involved there
preserve ${\mathbb A}^1$-equivalences, this fibrant replacement functor preserves ${\mathbb A}^1$-equivalences.
Clearly it also sends $Z/\ell$-module spectra to
$Z/\ell$-module spectra. Therefore, in this case, the usual Bousfield-Kan $Z/\ell$-completion extends to the motivic
setting. The following discussion is simply an extension of the above discussion to handle the equivariant spectra
considered in the earlier sections of this paper.

\vskip .3cm
It may also be necessary to point out that the explicit ${\mathbb A}^1$-localization functor in \cite[Theorem 4.2.1]{Mor}
works only in model categories that are weakly finitely generated, i.e. where the domains and co-domains of the generating cofibrations
and generating trivial cofibrations are small. Since we use injective model structures, it follows that the last hypothesis fails
in our setting and therefore, one cannot make use of the above ${\mathbb A}^1$-localization functor of Morel.
\vskip .3cm
The above paragraphs should provide ample justification for the modified Bousfield-Kan completion considered below.
Our constructions below, have the advantage of showing explicitly that the
completion functor may be made to be compatible with the ${\mathbb A}^1$-localization
functor. 
  We proceed to do this presently, but we will
digress to develop a bit of the technicalities. 
\vskip .3cm
\subsubsection{}
\label{generators}
We will let $\Spt^{Z/\ell, \group}$ denote either 
$\Spt_{ mot}^{Z/\ell, \group}$ or $\Spt/S_{ mot}^{Z/\ell, \group}$. Let $\Spt^{\group}$ denote
$\Spt_{ mot}^{\group}$ or $\Spt/S_{ mot}^{ \group}$ where $J_{Sp}$ ($I_{Sp}$) will denote the generating trivial cofibrations
generating cofibrations, \res). We will next provide $\Spt^{Z/\ell, \group}$ with the cofibrantly generated model category structure
where the generating trivial cofibrations (generating cofibrations) are $Z/\ell (J_{Sp})$ ($Z/\ell(I_{Sp})$, \res).  Observe  that
the free $Z/\ell$-module functor is left-adjoint to the underlying functor $U: \Spt^{Z/\ell, \group} \ra \Spt^{\group}$ and $U$-applied to any map
in $Z/\ell(J_{Sp})$-cell ($\Z/\ell(I_{Sp}$-cell) is a $J_{Sp}$-cofibration ($I_{Sp}$-cofibration, \res). Therefore,  one may readily see that 
the choice of $Z/\ell(J_{Sp})$ and $Z/\ell(I_{Sp})$ provides $\Spt^{Z/\ell, \group}$ with the structure of a cofibrantly generated model
category where the generating trivial cofibrations (generating cofibrations) are $Z/\ell(J_{Sp})$ ($Z/\ell(I_{Sp})$, \res). 
\vskip .4cm
\subsubsection{\bf The basic construction}
\begin{enumerate} [\rm (i)]
 \item {For each $Y \eps \Spt^{Z/\ell, \group}$, we let $S(Y) $ denote the set of
all commutative squares 
\[\xymatrix{A \ar@<1ex>[r] \ar@<-1ex>[d]^f & Y \ar@<-1ex>[d] \\
B \ar@<1ex>[r] & 0}\]
\vskip .3cm \noindent
where $f \eps Z/\ell(J_{Sp})$.}
\item{ Then we let $P(Y)= (\oplus B) {\underset {(\oplus A)} {\oplus} } Y$ where the sum is over all elements of $S(Y)$.}
\end{enumerate}
\vskip .3cm
Now we start with an $E \eps \Spt^{Z/\ell, \group}$  and will construct a factorization of the map
 $Z/\ell(E) \ra 0$ into $Z/\ell(E) \ra F(E) \ra 0$ , where the first map is a cofibration 
(that is an ${\mathbb A}^1$-equivalence) followed by an 
${\mathbb A}^1$-local fibration  {\it all in the category $\Spt^{Z/\ell, \group}$}.
 Let $F^0(E) = Z/\ell(E)$. 
The factorization is obtained as in the diagram
\be \begin{equation}
     \label{factorization}
\xymatrix{{F^0(E)} \ar@<-1ex>[drr]^{p_0} \ar@<1ex>[r] &{F^1(E)} \ar@<-1ex>[dr]^{p_1} \ar@<1ex>[r] & {F^2(E)} 
\ar@<1ex>[r] \ar@<-1ex>[d]^{p_2} & {\cdots}  \ar@<1ex>[r] &{F^{\beta} (E)} 
\ar@<-1ex>[dll]^{p_{\beta}} 
\ar@<1ex>[r] & {\cdots}
\\
& & {0=Zl/(*)}}
 \end{equation} \ee
\vskip .3cm \noindent
 Assume we have constructed the above diagram up to $F^{\beta}(E)$ all in 
$\Spt^{Z/\ell, \group}$ beginning with $F^0(E)$. To continue the construction, 
  we consider maps from families of maps $ A \ra B$  in $Z/\ell(J_{Sp})$  to 
$(F^{\beta}(E))  \ra 0 =Z/\ell(*)$. One then lets $F^{\beta +1}(E) =$ be defined by the pushout
 $(\oplus B){\underset {\oplus A} \oplus} F^{\beta}(E)$ in the category 
 $\Spt^{Z/\ell, \group}$. i.e. $F^{\beta +1}(E) = P(F^{\beta}(E))$.
For a limit ordinal $\gamma $, $F^{\gamma}(E)$ is defined as ${\underset {\beta < \gamma } \colim} 
F^{\beta}(E)$
 with the colimit taken in the same category $\Spt^{Z/\ell, \group}$.
We obtain the required factorization by taking $F(E) = {\underset {\beta < \lambda} \colim} 
F^{\beta}(E)$ and
with the obvious induced maps $Z/\ell(P) \ra F(E)$ and $E \ra 0$. Here $\lambda$ is a sufficiently large
enough ordinal: if the domains of $J'_{Sp}$ are $\kappa$-small for some ordinal $\kappa$, then
$\lambda $ is required to be $\kappa$-filtered. This means $\lambda$ is a limit ordinal and that 
if $A \subseteq \lambda$ and $|A| \le \kappa$, then $sup A < \lambda$.
\vskip .3cm
 It is clear from the construction that
 $F(E) \eps \Spt^{Z/\ell, \group}$. That it is ${\mathbb A}^1$-local (i.e. is a fibrant object of $\Spt^{Z/\ell, \group}$)
 follows from the observation that 
the domain of any object of $Z/\ell(J_{Sp})$ is small with respect to $Z/\ell(J_{Sp})$-cell.. 
{\it Henceforth we will denote the above object $F(E)$ as $\widetilde {Z/\ell(E)}$.} 
\vskip .3cm
\begin{proposition} 
\label{Reedy}
Suppose ${\widetilde E} \eps \Spt_{ mot}^{Z/\ell, \group} $ is fibrant. Then any map $G \ra  U({\widetilde {E}})$
defines an induced map ${\widetilde {Z/\ell}}(G) \ra {\widetilde E}$. i.e. The underlying functor $U$
sending a fibrant spectrum in $\Spt_{ mot}^{Z/\ell, \group}$ to $\Spt_{ mot}^{\group}$ has ${\widetilde {Z/\ell}}$
as its left-adjoint.
\end{proposition}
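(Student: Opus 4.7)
The plan is to unwind the transfinite construction that defines $\widetilde{Z/\ell}(G)$ and use the fibrancy of $\widetilde{E}$ to extend a map in stages. First I would invoke the ordinary adjunction between the free $Z/\ell$-module functor $Z/\ell(\cdot)$ and the underlying functor $U$ (discussed in \ref{generators}): a map $\phi: G \to U(\widetilde{E})$ in $\Spt^{\group}$ corresponds canonically to a map $\phi_0: Z/\ell(G) = F^0(G) \to \widetilde{E}$ in $\Spt^{Z/\ell, \group}$. This gives the base case for a transfinite extension.

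Next I would build, by transfinite induction on $\beta$, a compatible family of maps $\phi_\beta: F^\beta(G) \to \widetilde{E}$ extending $\phi_0$. At a successor stage $\beta+1$, recall that $F^{\beta+1}(G) = P(F^\beta(G))$ is the pushout
\[
F^{\beta+1}(G) \; = \; \Bigl(\bigoplus_{(A \to B) \in S(F^\beta(G))} B\Bigr) \; \oplus_{\bigoplus A} \; F^\beta(G),
\]
where the coproduct is indexed by commutative squares whose left-hand map lies in $Z/\ell(J_{Sp})$ and whose right-hand target is $0$. To extend $\phi_\beta$ to $\phi_{\beta+1}$, it is enough, by the universal property of the pushout, to produce a lift $B \to \widetilde{E}$ for each square $(A \to B) \in S(F^\beta(G))$, compatible with the composition $A \to F^\beta(G) \xrightarrow{\phi_\beta} \widetilde{E}$. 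Since $\widetilde{E}$ is fibrant in $\Spt_{mot}^{Z/\ell, \group}$ and $A \to B$ lies in $Z/\ell(J_{Sp})$, which is the set of generating trivial cofibrations for the motivically localized model structure on $\Spt^{Z/\ell, \group}$, such a lift exists by the right-lifting property. At a limit ordinal $\gamma$, one sets $\phi_\gamma = \colim_{\beta < \gamma} \phi_\beta$, which is well defined because the pieces agree by the inductive hypothesis. Passing to $\beta = \lambda$ yields the desired map $\phi_\infty: \widetilde{Z/\ell}(G) \to \widetilde{E}$.

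Finally, I would verify that the association $\phi \mapsto \phi_\infty$ is natural in $G$ and in $\widetilde{E}$, and that it is inverse to the map obtained by pre-composing a morphism $\widetilde{Z/\ell}(G) \to \widetilde{E}$ with the canonical unit $G \to U(Z/\ell(G)) \to U(\widetilde{Z/\ell}(G))$. Naturality is straightforward from the functoriality of each $F^\beta$; the inverse relation follows because on the base term $F^0(G) = Z/\ell(G)$ the construction reduces to the ordinary $(Z/\ell, U)$-adjunction, and because any map $\psi: \widetilde{Z/\ell}(G) \to \widetilde{E}$ is determined by its restriction to $F^0(G)$ (uniqueness here uses the fibrancy of $\widetilde{E}$ together with the fact that the successor stages are constructed by pushouts along trivial cofibrations, so extensions exist and any two that agree on $F^\beta(G)$ differ by a homotopy absorbed by the lifting argument).

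The main obstacle is the inductive step, namely producing compatible lifts against each $A \to B$ in $Z/\ell(J_{Sp})$ while keeping everything functorial; the essential input here is precisely the characterization of fibrant objects in $\Spt_{mot}^{Z/\ell, \group}$ via the right-lifting property against $Z/\ell(J_{Sp})$, which was established in \ref{generators}. Once this is in place, transfinite induction and the colimit construction at limit ordinals take care of the rest.
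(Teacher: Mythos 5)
Your core argument reproduces the paper's proof exactly: pass to $Z/\ell(G) = F^0(G)\to\widetilde E$ by the free/forgetful adjunction, then extend transfinitely across the tower $F^\beta(G)$ by using the right-lifting property of the fibrant object $\widetilde E$ against the generating trivial cofibrations in $Z/\ell(J_{Sp})$ at successor stages and taking colimits at limit stages. That is the same decomposition and the same key input as in the paper.

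One caution on your final paragraph: the claim that a map $\psi: \widetilde{Z/\ell}(G)\to\widetilde E$ is \emph{determined} by its restriction to $F^0(G)$ does not follow from the lifting argument. Lifts against trivial cofibrations into a fibrant target exist but are only unique up to homotopy, so two extensions agreeing on $F^\beta(G)$ need not agree on $F^{\beta+1}(G)$. Thus the strict 1-categorical adjunction bijection cannot be deduced this way. The paper's proof makes no such uniqueness claim -- it only constructs the extension -- so your extra paragraph, while well-intentioned, overreaches. If you want to keep it, the statement should be weakened to an adjunction up to homotopy, or the uniqueness justified by a different mechanism (e.g.\ by observing that each choice of lift at a successor ordinal is coherent with the universal property of the pushout once a single lift of $\bigoplus_\alpha B_\alpha \to \widetilde E$ is fixed, which is still only a choice and not canonical).
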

\begin{proof} We will start with a map $G \ra U({\widetilde {E}})$ in $\Spt_{mot}^{\group} $,
where ${\widetilde E} \eps \Spt_{ mot}^{Z/\ell, \group} $.
 This corresponds to map $Z/\ell(G) \ra {\widetilde {E}}$ of $Z/\ell$-module spectra. We proceed to show that this map induces
a map ${\widetilde {Z/\ell}(G)} \ra {\widetilde {E}}$ so that pre-composing with the map 
$Z/\ell(G) \ra {\widetilde {Z/\ell}(G)}$ is the map $Z/\ell(G) \ra {\widetilde {E}}$.
We will show inductively the above map $Z/\ell(G) \ra {\widetilde {E}}$  extends to a map
$F^{\beta}(G)\ra {\widetilde E}$ for all $\beta < \lambda$. Assume that $\beta$ is such an ordinal for which 
we have extended the given map $Z/\ell(G) \ra {\widetilde {E}}$ to a map $F^{\beta}(G) 
\ra {\widetilde {E}}$.
 Let $\{A_{\alpha} \ra B_{\alpha}|\alpha \}$ denote a family of trivial cofibrations belonging to
$Z/\ell(J_{ Sp})$ indexed by a small set so that one is provided with a map 
$\oplus _{\alpha} A_{\alpha} \ra F^{\beta}(G)$. Since ${\widetilde {E}}$ is a fibrant object of
$\Spt_{ mot}^{Z/\ell, \group} $, and each map $A_{\alpha} \ra B_{\alpha}$ is a generating trivial
cofibration in the same category, one obtains an extension $\oplus_{\alpha} B_{\alpha} \ra {\widetilde {E}}$.
\vskip .3cm
Next recall that $F^{\beta+1}(G) = 
(\oplus_{\alpha}B_{\alpha}) {\underset {(\oplus _{\alpha} A_{\alpha})} \oplus} F^{\beta}(G)$. 
Therefore, this
has an induced map of $Z/\ell$-module spectra to ${\widetilde {E}}$. Since $F^{\gamma}(G)$ for a limit ordinal $\gamma <\lambda$
is defined as $\colim_{\beta < \gamma} F^{\beta}(G)$, it follows that we obtain the 
required extension of the given map to a map ${\widetilde {Z/\ell}(G)} \ra {\widetilde {E}}$.
\end{proof}
\begin{proposition}
 Assume the above situation. Then  $\widetilde {Z/\ell(E)}$ is $Z/\ell$-complete in the following sense.
For every map $\phi: A \ra B$ in $\Spt^{\group}$ with both $A$ and $B$ cofibrant and
which induces an isomorphism
 $H_*(A, { Z}/\ell) {\overset {\cong} {\ra}} H_*(B, { Z}/\ell)$ of the homology
presheaves with $Z/\ell$-coefficients, then the induced map
 $\phi^*:Map(B, U(\widetilde {Z/\ell(E)})) \ra Map(A, U(\widetilde {Z/\ell}(E)))$ is a weak-equivalence of simplicial sets.
\end{proposition}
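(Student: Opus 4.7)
The plan is to combine the adjunction $\widetilde{Z/\ell} \dashv U$ from Proposition ~\ref{Reedy} with the fact that $\widetilde{Z/\ell}(\phi)$ is a weak-equivalence between cofibrant objects in $\Spt^{Z/\ell, \group}$, and then invoke the simplicial model category axioms.

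First I would promote the Hom-set adjunction in Proposition ~\ref{Reedy} to an equivalence of simplicial mapping spaces. Since both $Z/\ell$ and $U$ are defined levelwise on underlying simplicial presheaves (and so are simplicial functors), the Quillen adjunction $Z/\ell \dashv U$ yields, for any $A$ cofibrant in $\Spt^{\group}$ and $\widetilde{Z/\ell(E)}$ fibrant in $\Spt^{Z/\ell, \group}$, a natural weak-equivalence
\[
Map(A, U(\widetilde{Z/\ell(E)})) \simeq Map_{Z/\ell}(Z/\ell(A), \widetilde{Z/\ell(E)}),
\]
where $Map_{Z/\ell}$ denotes the simplicial mapping space in $\Spt^{Z/\ell, \group}$. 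Since $Z/\ell(A) \ra \widetilde{Z/\ell}(A)$ is a trivial cofibration in $\Spt^{Z/\ell, \group}$ by construction (a transfinite composite of pushouts along elements of $Z/\ell(J_{Sp})$) and $\widetilde{Z/\ell(E)}$ is fibrant, the right-hand side is further weakly equivalent to $Map_{Z/\ell}(\widetilde{Z/\ell}(A), \widetilde{Z/\ell(E)})$; the same reasoning applies with $A$ replaced by $B$.

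Next I would show that $\widetilde{Z/\ell}(\phi): \widetilde{Z/\ell}(A) \ra \widetilde{Z/\ell}(B)$ is a weak-equivalence in $\Spt^{Z/\ell, \group}$. By ~\ref{Zlspectra}(i), the stable homotopy presheaves of $Z/\ell(P)$ coincide with the $Z/\ell$-homology presheaves $H_*(P, Z/\ell)$. The hypothesis on $\phi$ therefore implies $Z/\ell(\phi)$ is an isomorphism on stable homotopy presheaves, which is a fortiori an ${\mathbb A}^1$-local stable weak-equivalence since left Bousfield localization enlarges the class of weak-equivalences. Because weak-equivalences in $\Spt^{Z/\ell, \group}$ are detected by the underlying functor $U$, we conclude that $Z/\ell(\phi)$ is a weak-equivalence there. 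Two-out-of-three applied to the commutative diagram built from the trivial cofibrations $Z/\ell(P) \ra \widetilde{Z/\ell}(P)$ then yields the assertion for $\widetilde{Z/\ell}(\phi)$. To conclude, observe that $\widetilde{Z/\ell}(A)$ and $\widetilde{Z/\ell}(B)$ are cofibrant in $\Spt^{Z/\ell, \group}$ (the left Quillen functor $Z/\ell$ preserves cofibrant objects, and transfinite composites of pushouts along trivial cofibrations preserve cofibrancy), while $\widetilde{Z/\ell(E)}$ is fibrant. The simplicial model category axiom applied to a weak-equivalence between cofibrant objects mapping into a fibrant object then yields
\[
\widetilde{Z/\ell}(\phi)^*: Map_{Z/\ell}(\widetilde{Z/\ell}(B), \widetilde{Z/\ell(E)}) \ra Map_{Z/\ell}(\widetilde{Z/\ell}(A), \widetilde{Z/\ell(E)})
\]
is a weak-equivalence of simplicial sets; combined with the identifications above, this produces the asserted weak-equivalence $\phi^*$.

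The main obstacle I anticipate is the first step: Proposition ~\ref{Reedy} only asserts adjunction at the level of Hom-sets, and upgrading it to simplicial mapping spaces — though plausible because $Z/\ell$ and $U$ are levelwise functors — requires checking that the fibrant-replacement $Z/\ell \leadsto \widetilde{Z/\ell}$, built out of transfinite pushouts, is compatible with the simplicial tensor $(-) \wedge \Delta[n]_+$ up to weak-equivalence. A related care point is ensuring that the identification of $\pi_*(Z/\ell(A))$ with $H_*(A, Z/\ell)$ — originally formulated for simplicial presheaves in ~\ref{Zlspectra}(i) — transfers correctly to the stable, ${\mathbb A}^1$-local setting on spectra, so that the hypothesis on homology presheaves genuinely suffices to produce a weak-equivalence in the motivic $Z/\ell$-module category $\Spt^{Z/\ell, \group}$.
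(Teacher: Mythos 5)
Your argument is correct and follows essentially the same route as the paper's proof: the enriched Quillen adjunction $Z/\ell \dashv U$ converts $Map(B, U(\widetilde{Z/\ell(E)}))$ into $Map_{Z/\ell}(Z/\ell(B), \widetilde{Z/\ell(E)})$, the hypothesis on $Z/\ell$-homology makes $Z/\ell(\phi)$ a weak-equivalence between cofibrant objects of $\Spt^{Z/\ell,\group}$, and the SM7/Ken Brown lemma applied to the fibrant target $\widetilde{Z/\ell(E)}$ finishes. The only excess is your extra passage from $Z/\ell(A)$ to $\widetilde{Z/\ell}(A)$ (and likewise for $B$): since $Z/\ell$ is left Quillen and already preserves cofibrancy, the paper simply invokes the final step for $Z/\ell(\phi)$ directly, with no fibrant replacement of the source objects needed.
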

\begin{proof}
This follows readily from the
following observations: 
\vskip .3cm \noindent
(i)  such a homology isomorphism induces a weak-equivalence 
$Z/\ell(A) \ra Z/\ell(B) $ and 
\vskip .3cm \noindent
(ii) $Map(B, U(\widetilde {Z/\ell(E)})) \simeq Map(Z/\ell(B), \widetilde {Z/\ell(E)})$  and 
$Map(A, U(\widetilde {Z/\ell(E)})) \simeq Map(Z/\ell(A), \widetilde {Z/\ell(E)})$ 
\vskip .3cm \noindent
where the $Map$ on the 
left-side (right-side) is 
taken in the category $\Spt_{mot}^{\group} $ 
\newline \noindent
($\Spt_{mot}^{Z/\ell, \group}$ ).
 One may observe that the functor $Z/\ell(\quad )$ preserves cofibrant objects
and since ${\widetilde {Z/\ell(E)}}$ is a fibrant object in $\Spt_{S, mot}^{Z/\ell, \group}$, the weak-equivalence
$Z/\ell(A) \ra Z/\ell(B)$ induces the weak-equivalence $Map(Z/\ell(B), \widetilde {Z/\ell(E)}) 
\simeq Map(Z/\ell(A), \widetilde {Z/\ell(E)})$.
\end{proof}
\subsubsection{}
\label{obs.holim}
Proposition ~\ref{Reedy} above shows that $\{{\widetilde {Z/\ell}_n(E)}|n \ge 1\}$ (see Definition ~\ref{completions.def} below)
 provides a cosimplicial
 object in $\Spt_{mot}^{\group} $ (with ${\widetilde {Z/\ell}(E)}$ in degree $0$) which is 
group-like, i.e. each ${\widetilde {Z/\ell}_n(E)}$
belongs to $\Spt_{mot}^{Z/\ell, \group} $ and that the structure maps of the cosimplicial object except
for $d^0$ are $Z/\ell$-module maps. Such a cosimplicial object is  fibrant in the
Reedy model structure on the category of cosimplicial objects of $\Spt_{S, mot}^{Z/\ell, \group} $:
see for example, \cite[Chapter 10, 4.9 proposition]{B-K}.
 Therefore, one may take the homotopy inverse limit of this cosimplicial object
just as in \cite[Chapter X]{B-K}. 
\vskip .3cm
In view of the observation, it follows that if ${\widetilde {Z/\ell}_{i \le n}(E)}$ denotes
the corresponding truncated cosimplicial object, truncated to degrees $\le n$, one obtains
a compatible collection of maps $\{\holim {\widetilde {Z/\ell}_{\bullet}(E)} \ra 
\holim {\widetilde {Z/\ell}_{i \le n}(E)}|n \ge 1\}$. Moreover the fiber of the map
 $\holim {\widetilde {Z/\ell}_{i \le n}(E)} \ra \holim {\widetilde {Z/\ell}_{i \le n-1}(E)}$
identifies with $ker(s^0) \cap \cdots \cap ker(s^{n-1})$, where $s^i: {\widetilde {Z/\ell}_n(E)} \ra {\widetilde {Z/\ell}_{n-1}(E)}$
is the $i$-th co-degeneracy.
\vskip .3cm
\begin{definition} ($Z/\ell$-completions.)
\label{completions.def} First we will  apply the construction above to define ${\widetilde {Z/\ell(P)}}$.
 One  repeatedly applies the functor $ \widetilde {Z/\ell}$ to an object $E
\eps \Spt_{mot}^{\Z/\ell, \group}$, to define the tower $\{{\widetilde {(Z/\ell)_n(E)}}|n\}$ in $Spectra(\C)$. We let 
$ {\widetilde {Z/\ell}}_{\infty}(E) = holim_n \{{\widetilde {(Z/\ell)_n(E)}}|n\}$.
In contrast, we let $ { {Z/\ell}}_{\infty}(E) = holim_n \{{ {(Z/\ell)_n(E)}}|n\}$.
\end{definition}
\vskip .3cm
Completions will be most often applied to pro-objects in $Spectra(\C)$.
Let $X = \{X_i|i \eps I\} \eps pro-Spectra(\C)$ denote a pro-object
indexed by a small category $I$. Let $E \eps Spectra(\C)$. Then we let
\be \begin{align}
\label{complete.pro.1}
\Hom_{Sp}(X, {\widetilde {Z/\ell}}_{\infty}(E)) &= holim_n colim_I \Hom_{Sp}(X_i, {\widetilde {(Z/\ell)_n(E)}})\\
Map(X, {\widetilde {Z/\ell}}_{\infty}(E)) &= holim_n colim_I Map (X_i, {\widetilde {(Z/\ell)_n(E)}}) 
\end{align} \ee
\vskip .3cm \noindent
Here $\Hom$ denotes the internal hom in the category $Spectra(\C)$. 
\subsubsection{\bf Properties of the completion functor}
\label{completion.mult}
\vskip .3cm
Here we list a sequence of key properties of the  completion functor so as to serve as a reference.
\begin{enumerate}[\rm(i)]
\item{The $Z/\ell$-completion, $\widetilde {Z/\ell _{\infty}(E)}$ is $Z/\ell$-complete.
i.e. (i) $ U (\widetilde {Z/\ell_{\infty}(E)})$ it is fibrant in
 $\Spt^{\group}$ and for every map $\phi: A \ra B$ in $\Spt^{\group}$ between cofibrant 
objects which induces an isomorphism
 $H_*(A, { Z}/\ell) {\overset {\cong} {\ra}} H_*(B, { Z}/\ell)$ of the homology
presheaves with $Z/\ell$-coefficients,  the induced map
 $\phi^*:Map(B, \widetilde {Z/\ell _{\infty}(E)}) \ra Map(A, \widetilde
{Z/\ell _{\infty}(E)})$ is a weak-equivalence of simplicial sets. In particular, this applies to the case where $B= Z/\ell _{\infty}(A)$ and
$f:A \ra B$ is the obvious Bousfield-Kan completion map.}

\item{For each $E \eps \Spt_{}^{\group}$, each 
 $\widetilde {Z/\ell_n(E)}$ is ${\mathbb A}^1$-local and belongs to $\Spt^{Z/\ell, \group}$.}
\item{For $ E \eps \Spt_{et}^{\group}$ and let $Z/\ell(E)$ denote the free $Z/\ell$-vector space functor
applied to $E$. Then $Z/\ell_n(E) = Z/\ell(U(Z/\ell_{n-1}(E)))$ defines the $n$-th term of the tower defining the usual 
Bousfield-Kan $Z/\ell$-completion. One obtains a natural map $\{Z/\ell_n(E) \ra {\widetilde {Z/\ell_n(E)}}|n \}$
of towers and therefore an induced map on taking the homotopy inverse limits.}
\end{enumerate}

\label{completions}


\vskip .4cm

\end{document}